\journal{Journal of Mathematical Analysis and Applications}
\pgfplotsset{compat=1.11}
\newtheorem{theorem}{Theorem}[section]
\newtheorem{definition}[theorem]{Definition}
\newtheorem{lemma}[theorem]{Lemma}
\newtheorem{corollary}[theorem]{Corollary}
\newtheorem{proposition}[theorem]{Proposition}
\newtheorem{remark}[theorem]{Remark}
\newtheorem*{example}{Example}
\providecommand{\R}{\ensuremath{\mathbb{R}}}
\providecommand{\C}{\ensuremath{\mathcal{C}}}	%set of copulas
\providecommand{\Cd}[1]{\C_{#1}}				%set of #1-variate copulas
\providecommand{\M}{\ensuremath{\mathcal{M}}}	%set of tail dependence functions
\providecommand{\Md}[1]{\M_{#1}}				%set of #1-variate tail dependence functions
\providecommand{\SI}[1]{\mathcal{S}^{#1}}		%unit simplex in R^#1
\providecommand{\boldm}[1]{\bm{#1}}						%Bold text	
\providecommand{\rbraces}[1]{\left( #1 \right)} 		%(...)
\providecommand{\cbraces}[1]{\left[ #1 \right]}			%[...]
\providecommand{\braces}[1]{\left\{ #1 \right\}} 		%legacy for {...}
\providecommand{\abs}[1]{\left\lvert #1 \right\rvert} 	%|...|
\providecommand{\norm}[1]{\left\lVert #1 \right\rVert} 	%||...||
\newcommand{\set}[2]{\left\{ #1 \; \left \arrowvert \; #2 \right. \right\}} %{... | ... }
\providecommand{\indFunc}[1]{\mathbbm{1}_{#1}} 
\providecommand{\lebesgue}[3]{\int\limits_{#2} #1 \ \mathrm{d}#3}		%Lebesgue integral
\providecommand{\rInt}[4]{\int\limits_{#2}^{#3} #1 \ #4}				%Formatted integral
\providecommand{\cInt}[4]{\rInt{#1}{#2}{#3}{\mathrm{d}#4}}				%Usual integral notation
\providecommand{\TDLn}{\Lambda}									
\providecommand{\TD}[2]{\Lambda \rbraces{#1 \; ; #2}}		%Lambda(w ; C)
\providecommand{\TDL}[2]{\TD{#1}{#2}}						%Legacy for \TD
\providecommand{\TDLphFo}[1]{\widetilde{#1}}				%Forced: Restriction of Lambda
\providecommand{\TDLph}[1]{\widetilde{#1}}					%Restriction of Lambda
\providecommand{\TDLprod}{\TDLn^{0}}						%Zero function
\providecommand{\Mgprod}[3]{\phi_{#3, #2} \rbraces{#1}}
\providecommand{\Mprod}[2]{\phi_{#2} \rbraces{#1}}
\begin{document}

\begin{frontmatter}

\title{A Markov product for tail dependence functions\tnoteref{t1,t2}}
\tnotetext[t1]{\textcopyright 2021. This manuscript version is made available under the CC-BY-NC-ND 4.0 license \url{http://creativecommons.org/licenses/by-nc-nd/4.0/}.}
\tnotetext[t2]{Accepted for publication in the Journal of Mathematical Analysis and Applications (\hyperlink{https://doi.org/10.1016/j.jmaa.2021.124942}{10.1016/j.jmaa.2021.124942}).}
%% Group authors per affiliation:
\author[tud]{Karl Friedrich Siburg}\corref{correspondingauthor}%
\cortext[correspondingauthor]{Corresponding author.}%
\author[tud]{Christopher Strothmann\fnref{scholarship}}%
\fntext[scholarship]{Supported by the German Academic Scholarship Foundation.}%
\address[tud]{Faculty of Mathematics, TU Dortmund University, Germany}%

\begin{abstract}
We introduce a Markov product structure for multivariate tail dependence functions, building upon the well-known Markov product for copulas. 
We investigate algebraic and monotonicity properties of this new product as well as its role in describing the tail behaviour of the Markov product of copulas.
For the bivariate case, we show additional smoothing properties and derive a characterization of idempotents together with the limiting behaviour of $n$-fold iterations. 
Finally, we establish a one-to-one correspondence between bivariate tail dependence functions and a class of positive, substochastic operators.
These operators are contractions both on $L^1(\R_+)$ and $L^\infty(\R_+)$ and constitute a natural generalization of Markov operators. 
\end{abstract}

\begin{keyword}
Copula, Tail dependence, Markov product, Markov operator, Substochastic operator
\MSC[2010] 37A30 \sep 60E05  \sep  62H05
\end{keyword}
\end{frontmatter}

\section{Introduction}

In many applications, there is a need to quantify the dependence between different random variables. 
Examples range from finance to hydrology, where the dependence can have a global, for instance a linear or a functional, or a local character. 
In the following, we are interested in a certain type of local dependence, the tail dependence, which describes the extremal behaviour between multiple random variables. 
A natural application are the joint losses of multiple stocks in a portfolio.
The lower tail dependence function 
\begin{equation*}
	\Lambda((w_1, w_2) ; X, Y) := w^{-1}_1 \lim\limits_{s \searrow 0} \mathbb{P} \rbraces{X \leq F_X^{-1}(sw_1) \; | \; Y \leq F_Y^{-1}(sw_2)} = \lim\limits_{s \searrow 0} \frac{C_{XY}(sw_1, sw_2)}{s}
\end{equation*}
of two continuous random variables $X$ and $Y$ allows a scale-free characterization of the joint behaviour in the extremes, in this case the jointly occurring extreme losses.
Properties and applications of the tail dependence functions can be found in \cite{Joe.2014}, while estimators and their statistical properties have been established in \cite{Schmidt.2006}.

This paper treats the tail properties of a certain class of $d$-variate copulas, namely the ones constructed via the (generalized) Markov product. 
For the set of $2$-copulas, denoted by $\Cd{2}$, the Markov product $*$ has become an important tool in the modelling and description of dependencies. 
First introduced by \cite{Darsow.1992} to model transition probabilities in the context of Markov processes through a rephrasing of the Chapman-Kolmogorov-equations in terms of consistency conditions imposed on a family of copulas, it also plays an essential role in the theory of complete dependence (see \cite{Siburg.2008} and \cite{Trutschnig.2011}) and the study of extremal elements (see \cite{Darsow.1992}). 
An extensive overview over the properties and applications of the Markov product can be found in \cite{Durante.2015}.
Some results of the tail behaviour of similar constructions have been achieved in the context of vine-copulas by \cite{Joe.2010} and more recently by \cite{Jaworski.2015}.
To facilitate the study of the extremal behaviour of the Markov product on $\Cd{2}$, we introduce a generalized version of the Markov product on the set of all tail dependence functions $\Md{2}$ and link both under appropriate regularity conditions.  
One of the most important properties of the Markov product $\Md{2}$ is a monotonicity property, which results in an overall dependence reduction and, in general, does not hold for $2$-copulas. 
Using this monotonicity, we treat iterates of the Markov product and derive additional smoothing properties akin to those presented in \cite{Trutschnig.2013b}.
Finally, we connect the set of all bivariate tail dependence functions equipped with $*$ to a certain class of substochastic operators and their composition which generalize the well-known Markov operators. 

The paper is structured as follows.  
Section \ref{section:basics} presents the necessary notation and some preliminaries.
Section \ref{section:product} introduces the Markov product for tail dependence functions and establishes a link to the Markov product of copulas. 
Section \ref{section:monotonicity} discusses the monotonicity of the Markov product, while Section \ref{section:iterates} employs these results to derive the behaviour of iterates, idempotents and averages. 
Finally, Section \ref{section:operator} links $(\mathcal{M}, *)$ to a class of linear operators $(T, \circ)$.

\section{Notation and preliminaries} \label{section:basics}

A $d$-copula is a $d$-variate distribution function on $[0, 1]^d$ with uniform univariate margins. 
We write $\R_+ := [0, \infty)$ and use bold symbols to denote vectors, e.g. $\boldm{w} = (w_1, \ldots, w_d) \in \R_+^d$.

\begin{definition}  \label{def:tail_dependence_function}
For a $d$-copula $C$, the lower tail dependence function $\TDL{\cdot}{C}: \R_+^d \rightarrow \R_+$ is defined as
\begin{gather*}
		\TDLn(\boldm{w}) := \TDL{\boldm{w}}{C} := \lim\limits_{s \searrow 0} \frac{C(s \boldm{w})}{s} ~,
\end{gather*}
provided that the limit exists for all $\boldm{w}$ in $\R_+^d$. 
\end{definition}

Let $\Cd{d}$ and $\Md{d}$ denote the set of $d$-copulas and the set of $d$-variate tail dependence functions, respectively.
We refer to the lower Fréchet-Hoeffding-bound by $C^-$, the upper Fréchet-Hoeffding-bound by $C^+$ and the product copula by $\Pi$. 
Many properties of the copula $C$ immediately transfer to the tail dependence function $\TDLn$ (see, Propositions~4 and 6 in \cite{Jaworski.2006}):

\begin{proposition} \label{prop:tail_dep_func}
A function $\TDLn: \R_+^d \rightarrow \R_+$ is the tail dependence function of a copula $C$ if and only if 
\begin{enumerate}
	\item $\TDLn$ is bounded from below by $0$ and from above by $\TDLn^+ := \TDL{\cdot}{C^+}$.\label{prop:tail_dep_func_bounded}
	\item $\TDLn$ is $d$-increasing, i.e.\ the $\Lambda$-volume of any rectangle in $\R_+^d$ is nonnegative.
	\item $\TDLn$ is homogeneous of order $1$, i.e. $\TDLn(s\boldm{w}) = s \TDLn(\boldm{w})$ for any $s \in \R_+$ and $\boldm{w} \in \R^d_+$.\label{prop:tail_dep_func_homogeneous}
\end{enumerate}  
Furthermore, for any tail dependence function $\TDLn$, we have 
\begin{enumerate}[label=\alph*.]
  \item $\TDLn$ is Lipschitz continuous with constant $1$. 
  \item $\TDLn$ is concave. 
  \item $w_1 \mapsto \partial_1 \TDLn(w_1, w_2, \ldots, w_d)$ is decreasing for  almost all $w_1 \in \R_+$ and all $w_2, \ldots, w_d \in \R_+$.
\end{enumerate}
Finally, for $d=2$, $w_2 \mapsto \partial_1 \TDLn(w_1, w_2)$ is increasing for almost all $w_1 \in \R_+$ and all $w_2 \in \R_+$.
\end{proposition}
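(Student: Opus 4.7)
My plan is to exploit the homogeneity of $\TDLn$ together with its concavity, reducing the bivariate claim to a univariate monotonicity statement about concave functions. Fix $w_1 > 0$ and define the profile $\phi(t) := \TDLn(1, t)$ for $t \in \R_+$. By the homogeneity property, $\TDLn(w_1, w_2) = w_1 \phi(w_2/w_1)$ for all $w_1 > 0$ and $w_2 \in \R_+$. Since $\TDLn$ is concave, so is $\phi$, and hence $\phi$ is differentiable on the complement $\R_+ \setminus E$ of a countable set $E$.

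Direct differentiation in the first argument, valid whenever $t := w_2/w_1 \notin E$, gives
\[
\partial_1 \TDLn(w_1, w_2) \;=\; \phi(t) - t\, \phi'(t).
\]
Thus the claim reduces to showing that $h(t) := \phi(t) - t\,\phi'(t)$ is increasing on $\R_+ \setminus E$.

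For $t_1 < t_2$ in $\R_+ \setminus E$, the tangent-line bound from concavity of $\phi$ reads $\phi(t_2) - \phi(t_1) \geq (t_2 - t_1) \phi'(t_2)$. Substituting this into
\[
h(t_2) - h(t_1) \;=\; \bigl[\phi(t_2) - \phi(t_1)\bigr] - t_2 \phi'(t_2) + t_1 \phi'(t_1)
\]
yields $h(t_2) - h(t_1) \geq t_1 \bigl[\phi'(t_1) - \phi'(t_2)\bigr] \geq 0$, where the last inequality uses that $\phi'$ is decreasing. Hence $h$ is increasing on its domain, and via the change of variables $t = w_2/w_1$, the map $w_2 \mapsto \partial_1 \TDLn(w_1, w_2)$ is increasing on its domain for every $w_1 > 0$. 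In particular the ``almost all $w_1$'' qualifier can be sharpened to all $w_1 > 0$.

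The main subtlety I anticipate is bookkeeping where $\partial_1 \TDLn(w_1, \cdot)$ is actually defined: for fixed $w_1 > 0$, it exists precisely on the complement of the countable, measure-zero set $w_1 \cdot E$, and the monotonicity assertion should be interpreted on that set. An alternative approach via the $2$-increasing property — fixing $w_2 < w_2'$, showing $g(w_1) := \TDLn(w_1, w_2') - \TDLn(w_1, w_2)$ is increasing and differentiating — works for rational pairs $(w_2, w_2')$ but requires an awkward density extension from a countable dense subset of second coordinates to all of $\R_+$. The homogeneity route sidesteps this complication entirely and is therefore my preferred approach.
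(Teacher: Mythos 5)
The paper does not prove this proposition at all --- it is quoted from Propositions~4 and~6 of Jaworski (2006) --- so there is no in-paper argument to compare against. Measured against the full statement, however, your proposal has a genuine gap of scope: it addresses only the final sentence (for $d=2$, $w_2 \mapsto \partial_1 \TDLn(w_1,w_2)$ is increasing), and it does so by \emph{assuming} homogeneity and concavity, i.e.\ item~3 and item~b of the very proposition under discussion. The characterization (the ``if and only if'' in items~1--3), the Lipschitz continuity, the concavity, and the monotonicity in item~c are nowhere established; a complete proof would have to construct a copula realizing a given $\TDLn$ with properties~1--3 (this is the hard direction, done in Jaworski's Proposition~6 via an explicit construction) and derive a--c from the characterization, none of which your reduction touches.

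For the part you do treat, the argument is correct and clean: writing $\TDLn(w_1,w_2)=w_1\phi(w_2/w_1)$ with $\phi(t)=\TDLn(1,t)$ concave, you get $\partial_1\TDLn(w_1,w_2)=\phi(t)-t\phi'(t)$ at $t=w_2/w_1$, and the tangent-intercept function $h(t)=\phi(t)-t\phi'(t)$ is indeed increasing for concave $\phi$ (your chord inequality $\phi(t_2)-\phi(t_1)\ge (t_2-t_1)\phi'(t_2)$ and the monotonicity of $\phi'$ are used correctly, with $t_1\ge 0$). Your bookkeeping of the exceptional set is also essentially right, though note that your claim that the qualifier can be ``sharpened to all $w_1>0$'' silently changes the quantifier structure: for each fixed $w_1$ the derivative fails to exist on the countable set $w_1\cdot E$, so ``increasing for all $w_2$'' only holds where the derivative is defined (or for a fixed one-sided version); the paper sidesteps exactly this point with its blanket remark that partial derivatives are only defined almost everywhere. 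So: correct and self-contained for the last claim, but not a proof of the proposition as stated.
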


The partial derivatives of $d$-copulas and $d$-variate tail dependence functions are only defined almost everywhere. 
We will suppress this fact in the rest of this article.

\begin{remark} \label{remark:bivariate_homogeneity}
Due to the positive homogeneity of $\TDLn$, in the bivariate case, we will often only consider $\TDLn$ on the unit simplex $\SI{1} := \set{\boldm{w} \in \R_+^2}{\boldm{w} = (t, 1-t) \text{ with } t \geq 0 }$ and identify $\SI{1}$ with $[0, 1]$ using 
\begin{equation}
	\TDLphFo{\Lambda}(t) := \TDLn(t, 1-t) ~.
\end{equation} 
\end{remark}

\begin{remark}
Following \cite{Jaworski.2006}, the tail dependence function $\TDLn_C$ of a copula $C$ is the first order approximation of $C$ in zero, that is
\begin{equation*}
	C(\boldm{u})	= \TDLn_C(\boldm{u}) + o(\norm{\boldm{u}})	\quad \text{ whenever } \norm{\boldm{u}} \rightarrow 0 ~. 
\end{equation*}
As $\TDLn_C$ is a local approximation of $C$ in zero, the usual $d_\infty$-metric is unable to differentiate between the tail behaviour of copulas.
Let us illustrate this point using the patchwork technique described in \cite{Durante.2013}.
For any given $d$-variate tail dependence function $\Lambda$, there exists a family of copulas with tail dependence function $\Lambda$, which is dense in the set of all $d$-copulas w.r.t the uniform topology.
This also implies that the class of all tail dependent $d$-copulas is dense in $\Cd{d}$. 
On the other hand, the class of $d$-copulas which do not allow for a tail dependence function is also dense in $\Cd{d}$ w.r.t the uniform topology.
\end{remark}

We investigate a generalized version of the Markov product introduced by \cite{Darsow.1992}, which was discussed in \cite{Jaworski.2015} in the context of vine-copulas. 

\begin{definition}
Let $C_1, \ldots, C_d$ be $2$-copulas and let $C$ be a $d$-copula. 
Then, the $(d+1)$-copula
\begin{equation*}
	\Mgprod{C_1, \ldots, C_d}{C}{u}(v_1, \ldots, v_d) := \cInt{C \rbraces{\partial_1 C_1(t, v_1), \ldots, \partial_1 C_d(t, v_d)}}{0}{u}{t} 
\end{equation*}
is called the $C$-lifting of the copulas $C_1, \ldots, C_d$.
Furthermore, we define the $d$-copula 
\begin{align*}
	\Mprod{C_1, \ldots, C_d}{C}(v_1, \ldots, v_d) 	&:= \cInt{C \rbraces{\partial_1 C_1(t, v_1), \ldots, \partial_1 C_d(t, v_d)}}{0}{1}{t} \\
													&= \Mgprod{C_1, \ldots, C_d}{C}{1}(v_1, \ldots, v_d)  
\end{align*}
to be the generalized Markov product of $C_1, \ldots, C_d$ induced by $C$. 
\end{definition}

Note that for $d=2$, the previously defined generalized Markov product
\begin{align*}
	\Mprod{C_1, C_2}{C}(v_1, v_2) 	= \cInt{C \rbraces{\partial_1 C_1(t, v_1), \partial_1 C_2(t, v_2)}}{0}{1}{t} 
\end{align*}
maps $\Cd{2} \times \Cd{2}$ onto $\Cd{2}$ and is closely related to the traditional Markov product of $2$-copulas via 
\begin{equation*}
	C_1 * C_2 (v_1, v_2) = \Mprod{C_1^T, C_2}{\Pi}(v_1, v_2) ~,
\end{equation*}
where $C_1^T (v_1, v_2) := C_1(v_2, v_1)$. 
\section{A Markov product for tail dependence functions} \label{section:product}

Similar to this construction of higher dimensional copulas from bivariate copulas, we introduce an operation for bivariate tail dependence functions. 

\begin{definition}
Let $\Lambda_1, \ldots, \Lambda_d \in \Md{2}$ and $C \in \Cd{d}$. 
We call 
\begin{equation*}
	\Mgprod{\Lambda_1, \ldots, \Lambda_d}{C}{w_0}(w_1, \ldots, w_d) := \cInt{C \rbraces{\partial_1 \Lambda_1 \rbraces{t, w_1}, \ldots, \partial_1 \Lambda_d \rbraces{t, w_d}}}{0}{w_0}{t}
\end{equation*}
the $C$-lifting of the tail dependence functions $\Lambda_1, \ldots, \Lambda_d$. 
Similarly, the generalized Markov product of $\Lambda_1, \ldots, \Lambda_d$ induced by $C$ is defined by
\begin{equation*}
	\Mprod{\Lambda_1, \ldots, \Lambda_d}{C}(w_1, \ldots, w_d) := \cInt{C \rbraces{\partial_1 \Lambda_1 \rbraces{(t, w_1)}, \ldots, \partial_1 \Lambda_d \rbraces{(t, w_d)}}}{0}{\infty}{t} ~.
\end{equation*}
\end{definition}

First, we verify that the $C$-lifting and the generalized Markov product do in fact generate new tail dependence functions. 

\begin{theorem}
Suppose $C \in \Cd{d}$ and $\Lambda_1, \ldots, \Lambda_d \in \Md{2}$. 
Then $\Mgprod{\Lambda_1, \ldots, \Lambda_d}{C}{w_0}$ and $\Mprod{\Lambda_1, \ldots, \Lambda_d}{C}$ are $(d+1)$-variate and $d$-variate tail dependence functions, respectively. 
\end{theorem}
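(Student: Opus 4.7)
The plan is to verify the three characterizing conditions of Proposition~\ref{prop:tail_dep_func} for $\Mgprod{\Lambda_1, \ldots, \Lambda_d}{C}{w_0}$ directly, and then to deduce the result for $\Mprod{\Lambda_1, \ldots, \Lambda_d}{C}$ via a monotone passage $w_0 \to \infty$. The integrand $t \mapsto C(\partial_1 \Lambda_1(t, w_1), \ldots, \partial_1 \Lambda_d(t, w_d))$ takes values in $[0, 1]$ (each $\partial_1 \Lambda_i \in [0, 1]$ since $\Lambda_i$ is Lipschitz with constant $1$) and is measurable, so the integral is well defined.

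Positive homogeneity of order $1$ follows from the substitution $t = s\tau$ together with the fact that $\partial_1 \Lambda_i$ is positively homogeneous of degree $0$ -- a direct consequence of the order-$1$ homogeneity of $\Lambda_i$. The lower bound $0$ is immediate; for the upper bound $\TDLn^+(w_0, \ldots, w_d) = \min(w_0, w_1, \ldots, w_d)$ one bounds $C$ by its Fréchet-Hoeffding upper bound, which gives the bound by $w_0$ trivially (integrand $\leq 1$) and, for each $i \geq 1$,
\begin{equation*}
	\Mgprod{\Lambda_1, \ldots, \Lambda_d}{C}{w_0}(w_1, \ldots, w_d) \leq \cInt{\partial_1 \Lambda_i(t, w_i)}{0}{w_0}{t} = \Lambda_i(w_0, w_i) \leq w_i ~,
\end{equation*}
where we used $\Lambda_i(0, w_i) = 0$ and $\Lambda_i(w_0, w_i) \leq \min(w_0, w_i)$.

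The key obstacle is the $(d+1)$-increasing property. The crucial observation is that, for each fixed $t \in \R_+$, the map $(v_1, \ldots, v_d) \mapsto C(\partial_1 \Lambda_1(t, v_1), \ldots, \partial_1 \Lambda_d(t, v_d))$ is $d$-increasing on $\R_+^d$: by the last statement of Proposition~\ref{prop:tail_dep_func}, each $v_i \mapsto \partial_1 \Lambda_i(t, v_i)$ is increasing into $[0, 1]$, and the composition of a $d$-increasing function with componentwise increasing maps remains $d$-increasing because the $\Lambda$-volume of a rectangle $\prod_i [a_i, b_i]$ with respect to the composition equals the $C$-volume of the image rectangle $\prod_i [\partial_1 \Lambda_i(t, a_i), \partial_1 \Lambda_i(t, b_i)]$. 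The $(d+1)$-volume of $\Mgprod{\Lambda_1, \ldots, \Lambda_d}{C}{w_0}$ over a rectangle $[a_0, b_0] \times [a_1, b_1] \times \cdots \times [a_d, b_d]$ therefore equals the integral over $[a_0, b_0]$ of this pointwise nonnegative $d$-volume and is itself nonnegative.

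For the generalized Markov product, observe that $w_0 \mapsto \Mgprod{\Lambda_1, \ldots, \Lambda_d}{C}{w_0}(w_1, \ldots, w_d)$ is increasing (nonnegative integrand) and bounded by $\min_i w_i$, so the pointwise monotone limit as $w_0 \to \infty$ exists and equals $\Mprod{\Lambda_1, \ldots, \Lambda_d}{C}(w_1, \ldots, w_d)$. Homogeneity, the bound by $\TDLn^+(w_1, \ldots, w_d) = \min_i w_i$, and the $d$-increasing property all pass through this monotone limit, concluding the argument.
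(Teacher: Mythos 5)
Your proposal is correct and follows essentially the same route as the paper: verify the characterization of Proposition~\ref{prop:tail_dep_func} (bounds via $C \leq C^+$ and $\partial_1\Lambda_i \in [0,1]$, the $(d+1)$-increasing property via the identity between the volume of the composition and the $C$-volume of the image rectangle using monotonicity of $v_i \mapsto \partial_1\Lambda_i(t,v_i)$, and homogeneity via the substitution $t = s\tau$ and degree-$0$ homogeneity of the partial derivatives). The only cosmetic difference is that you obtain $\Mprod{\Lambda_1,\ldots,\Lambda_d}{C}$ as the monotone limit $w_0 \to \infty$ of the lifting, whereas the paper repeats the verification directly for the integral over $[0,\infty)$; both are fine.
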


\begin{proof} 
First, the tail dependence functions $\Lambda_\ell$, for $\ell = 1, \ldots, d$, are positive, monotone in each component, Lipschitz continuous and thus have partial derivatives almost everywhere. 
Moreover, the partial derivatives attain values in $[0, 1]$. 
Therefore, we have
\begin{align*}
	\Mgprod{\Lambda_1, \ldots, \Lambda_d}{C}{w_0}(w_1, \ldots, w_d)
		&\leq \cInt{C^+ \rbraces{\partial_1 \Lambda_1(t, w_1), \ldots, \partial_1 \Lambda_d(t, w_d)}}{0}{\infty}{t} \\
		&\leq \min\limits_{\ell = 1, \ldots, d} \norm{\partial_1 \Lambda_\ell(t, w_\ell)}_1 
		= \min\limits_{\ell = 1, \ldots, d} \rbraces{\lim\limits_{t \rightarrow \infty} \Lambda_\ell(t, w_\ell)} \\
		&\leq \min\limits_{\ell = 1, \ldots, d} w_\ell < \infty ~,
\end{align*}
which establishes the existence of the integral. 
The second inequality is due to $\TDLn$ being increasing in each component and bounded above by $\TDLn^+$. 
Thus, we can define
\begin{equation*}
	\phi(\boldm{w}) := \cInt{C \rbraces{\partial_1 \Lambda_1(t, w_1), \ldots, \partial_1 \Lambda_d(t, w_d)}}{0}{w_0}{t} ~.
\end{equation*}
It remains to verify properties \ref{prop:tail_dep_func_bounded}. - \ref{prop:tail_dep_func_homogeneous}. of Proposition~\ref{prop:tail_dep_func}, which characterizes tail dependence functions.
For the first property, note that due to all copulas being bounded from above by $C^+$ and as tail dependence functions have bounded partial derivatives between $0$ and $1$, it holds
\begin{align*}
		0 	&\leq \cInt{C \rbraces{\partial_1 \Lambda_1(t, w_1), \ldots, \partial_1 \Lambda(t, w_d)}}{0}{w_0}{t} \\
			&\leq \begin{cases}
				\cInt{1}{0}{w_0}{t} = w_0 \\
				\cInt{C^+ \rbraces{\partial_1 \Lambda_1(t, w_1), \ldots, \partial_1 \Lambda_d(t, w_d)}}{0}{\infty}{t} \leq \min\limits_{\ell = 1, \ldots, d} w_\ell
			\end{cases} 
			= \Lambda^+(w_0, \ldots, w_d) ~.
\end{align*}
The $(d+1)$-increasing property of $\phi$ needs to be verified on every rectangle $R \subset \R_+^{d+1}$. 
A direct calculation identical to that of Proposition~2.2 in \cite{Durante.2008} with $w_\ell^1 \leq w_\ell^2$ yields 
\begin{align*}
	V_\phi \rbraces{\bigtimes\limits_{\ell = 0}^d [w_\ell^1, w_\ell^2]}	
		&= \cInt{V_C \rbraces{\bigtimes\limits_{\ell = 1}^d \cbraces{\partial_1 \Lambda_\ell(t, w^1_\ell), \partial_1 \Lambda_\ell(t, w^2_\ell)}}}{w_0^1}{w_0^2}{t} \geq 0 ~,
\end{align*}
which holds due to $\partial_1 \Lambda_\ell(t, w^1_\ell) \leq \partial_1 \Lambda_\ell(t, w^2_\ell)$.
Lastly, the positive homogeneity can be established via a change of variables and the positive homogeneity of order $0$ of the partial derivatives of $\TDLn$,
\begin{align*}
	\phi(s\boldm{w})
	&= \cInt{C \rbraces{\partial_1 \Lambda_1(t, sw_1), \ldots, \partial_1 \Lambda_d(t, sw_d)}}{0}{sw_0}{t}\\
	&= \cInt{C \rbraces{\partial_1 \Lambda_1(t/s, w_1), \ldots, \partial_1 \Lambda_d(t/s, w_d)}}{0}{sw_0}{t} \\
	&= s \cInt{C \rbraces{\partial_1 \Lambda_1(z, w_1), \ldots, \partial_1 \Lambda_d(z, w_d)}}{0}{w_0}{z}
	= s \phi(\boldm{w}) ~.
\end{align*}
By Proposition~\ref{prop:tail_dep_func}, we can thus find a copula $C^*$ with $\TDL{\boldm{w}}{C^*} = \phi(\boldm{w})$ for all $\boldm{w} \in \R_+^{d+1}$. \\
The proof that $\Mprod{\Lambda_1, \ldots, \Lambda_d}{C}$ is a $d$-variate tail dependence function works analogously.
\end{proof}

\begin{remark} \label{remark:markov_product_subdistribution}
Note that the first part of the proof only requires that all $\Lambda_\ell$ are $2$-increasing functions bounded from below by 0 and from above by $\TDLn^+$.
Furthermore, $\phi$ is positive homogeneous of order one if and only if $\partial_1 \TDL{(t, w_\ell)}{C_\ell}$ is homogeneous of order zero for all $\ell = 1, \ldots, d$ . 
\end{remark}

Before considering the properties of the Markov product in more detail, let us discuss how to construct copulas with a given tail dependence function $\Lambda$.
For $d=2$, every tail dependence function defines an extreme-value copula via
\begin{equation} \label{eqn:extreme_value_copula_def}
	C^{EV}(u, v)	= \exp \rbraces{\log(u) + \log(v) + \Lambda(-\log(u), -\log(v))} ~. 
\end{equation}
Then the survival copula of $C^{EV}$ has lower tail dependence function $\Lambda$, see, for example, \cite{Gudendorf.2010}. 
For $d > 2$, the general construction is more involved and can be found in the proof of Proposition~6 of \cite{Jaworski.2006}.
Moreover, we will investigate in Theorem~\ref{thm:connectionTDF_MP_lipschitz} and Theorem~\ref{thm:connectionTDF_MP_reduction} under which circumstances the Markov product of copulas $(C_1 * C_2)$ fulfils 
\begin{equation*}
	\TDL{\boldm{w}}{C_1 * C_2} = (\TDL{\cdot}{C_1} * \TDL{\cdot}{C_2})(\boldm{w}) ~.
\end{equation*}

The next proposition compiles basic algebraic properties of $\Mprod{\Lambda_1, \ldots, \Lambda_d}{C}$ and $\Mgprod{\Lambda_1, \ldots, \Lambda_d}{C}{w_0}$.

\begin{proposition} \label{prop:algebraic_prop}
Suppose $\Lambda_1, \ldots, \Lambda_d$ are bivariate tail dependence functions and $C$ is a $d$-copula. 
Then
\begin{enumerate}
	\item $\TDLn^+ = \TDL{\cdot}{C^+}$ is the unit element in the sense that if $\Lambda_\ell = \TDLn^+$, then 
	\begin{equation*}
		\Mgprod{\Lambda_1, \ldots, \Lambda_d}{C}{w_0} (w_1, \ldots, w_d)	= \Mgprod{\Lambda_1, \ldots, \Lambda_{\ell - 1}, \Lambda_{\ell + 1}, \ldots, \Lambda_d}{\widehat{C}_\ell}{\min \braces{w_0, w_\ell}} (\boldm{\widehat{w}_\ell}) ~,
	\end{equation*}
	where $\boldm{\widehat{w}_\ell} := (w_1, \ldots, w_{\ell - 1}, w_{\ell+1}, \ldots, w_d)$ and $\widehat{C}_\ell := C(u_1, \ldots, u_{\ell - 1}, 1, u_{\ell + 1}, \ldots, u_d)$.
	\item $\TDLprod := \TDL{\cdot}{\Pi}$ is the null element in the sense that if $\Lambda_\ell = \TDLprod$, then
	\begin{equation*}
		\Mgprod{\Lambda_1, \ldots, \Lambda_d}{C}{w_0} = \TDL{(w_0, \ldots, w_d)}{\Pi^{d+1}} = 0 ~.
	\end{equation*} 
	\item If $C$ is convex resp.\ concave in the $\ell$-th component, then $\Mgprod{\cdot}{C}{w_0}$ is convex resp.\ concave in the $\ell$-th component.
	\item For every permutation $\pi$ on $\braces{1, \ldots, d}$, we have
	\begin{equation*}
		\Mprod{\Lambda_1, \ldots, \Lambda_d}{C}(w_{\pi(1)}, \ldots, w_{\pi(d)}) = 
			\Mprod{\Lambda_{\pi(1)}, \ldots, \Lambda_{\pi(d)}}{C^\pi}(w_1, \ldots, w_d) ~,
	\end{equation*}
	where $C^\pi(u_1,\ldots,u_d) := C(u_{\pi(1)},\ldots,u_{\pi(d)})$. \label{prop:algebraic_prop_perm}
	\item If $C \leq D$ pointwise, then 
	\begin{equation*}
		\Mgprod{\Lambda_1, \ldots, \Lambda_d}{C}{w_0} \leq \Mgprod{\Lambda_1, \ldots, \Lambda_d}{D}{w_0} ~.
	\end{equation*}
\end{enumerate}
\end{proposition}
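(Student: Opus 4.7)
The plan is to verify each of the five items by direct substitution into the defining integral
\begin{equation*}
\Mgprod{\Lambda_1, \ldots, \Lambda_d}{C}{w_0}(w_1, \ldots, w_d) = \cInt{C\bigl(\partial_1 \Lambda_1(t, w_1), \ldots, \partial_1 \Lambda_d(t, w_d)\bigr)}{0}{w_0}{t} ~,
\end{equation*}
exploiting the standard boundary behaviour of copulas (a copula vanishes if an argument is $0$ and collapses onto the corresponding marginal if an argument is $1$) together with the analytic properties of tail dependence functions from Proposition~\ref{prop:tail_dep_func}.

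For (1), I would use $\TDLn^+(t, w_\ell) = \min(t, w_\ell)$, whence $\partial_1 \TDLn^+(t, w_\ell) = \indFunc{\{t < w_\ell\}}$ almost everywhere. Substituting: on $\{t < w_\ell\}$ the $\ell$-th slot of $C$ equals $1$ so $C$ collapses onto the $(d-1)$-marginal $\widehat{C}_\ell$, while on $\{t \geq w_\ell\}$ the $\ell$-th slot equals $0$ and $C$ vanishes. The indicator thus truncates the integration bound to $\min\{w_0, w_\ell\}$ and the claimed formula drops out. For (2), since $\Pi(sw_1, sw_2)/s = s w_1 w_2 \to 0$, we get $\TDLprod \equiv 0$ and therefore $\partial_1 \TDLprod \equiv 0$; the boundary behaviour of $C$ then makes the integrand identically zero. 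For (5), the pointwise inequality $C \leq D$ on $[0,1]^d$ transfers to the integrand for each $t$ and survives integration.

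For (4), I would expand the right-hand side using $C^\pi(v_1, \ldots, v_d) = C(v_{\pi(1)}, \ldots, v_{\pi(d)})$ and substitute $v_j = \partial_1 \Lambda_{\pi(j)}(t, w_j)$; careful bookkeeping of the permutation indices then shows the resulting integrand agrees with that of the left-hand side, after which the identity of the integrals is immediate.

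The delicate item is (3). I read ``$\ell$-th component of $\Mgprod{\cdot}{C}{w_0}$'' as its $\ell$-th tail-dependence-function input $\Lambda_\ell$ --- consistent with the phrasing of (1) and (2) and well-posed because $\Md{2}$ is closed under convex combinations. Under this interpretation, linearity of the partial derivative gives
\begin{equation*}
\partial_1 \bigl(\alpha \Lambda_\ell + (1-\alpha)\Lambda_\ell'\bigr)(t, w_\ell) = \alpha \partial_1 \Lambda_\ell(t, w_\ell) + (1-\alpha)\partial_1 \Lambda_\ell'(t, w_\ell) ~,
\end{equation*}
so the convexity (resp.\ concavity) of $C$ in its $\ell$-th argument transfers pointwise in $t$ to the integrand, and the inequality is preserved by integration. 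Identifying the correct meaning of ``$\ell$-th component'' is the main conceptual hurdle here, because the alternative reading ``convex in the scalar $w_\ell$'' would demand convexity of $w_\ell \mapsto \partial_1 \Lambda_\ell(t, w_\ell)$ in general, and this is not implied by Proposition~\ref{prop:tail_dep_func} (for instance, $\Lambda_\ell = \TDLn^+$ yields a step function in $w_\ell$).
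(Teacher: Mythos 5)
Your proposal is correct and follows essentially the same route as the paper: direct substitution into the defining integral, using $\partial_1 \TDLn^+(t,w_\ell)=\indFunc{[0,w_\ell]}(t)$ to truncate the integral and collapse $C$ onto its marginal for item 1, $\TDLprod\equiv 0$ together with groundedness of $C$ for item 2, index substitution for item 4, and pointwise inequalities of $C$ preserved under integration for items 3 and 5. Your discussion of item 3 (convexity in the $\ell$-th functional argument, via linearity of $\partial_1$ in $\Lambda_\ell$) is simply a more explicit rendering of the paper's one-line remark.
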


\begin{remark}
The term \enquote{unit-element} stems from the bivariate case, where $\phi_C: \Md{2} \times \Md{2} \rightarrow \Md{2}$ constitutes a genuine product, which fulfils 
\begin{equation*}
	\Mprod{\TDLn^+, \Lambda}{C} (w_1, w_2)	= \Lambda(w_1, w_2)  ~.
\end{equation*}
\end{remark}

\begin{proof}
\begin{enumerate}
	\item Without loss of generality, we consider $\ell = 1$. 
	As $\partial_1 \TDL{(t, w_1)}{C^+} = \indFunc{[0, w_1]}(t)$, we have
	\begin{align*}
		\Mgprod{\Lambda_1, \ldots, \Lambda_d}{C}{w_0} (w_1, \ldots, w_d)	&= \cInt{C(1, \partial_1 \Lambda_2(t, w_2), \ldots, \partial_1 \Lambda_d(t, w_d))}{0}{\min\braces{w_0,w_1}}{t} \\
																		&= \cInt{\widehat{C}_1(\partial_1 \Lambda_2(t, w_2), \ldots, \partial_1 \Lambda_d(t, w_d))}{0}{\min\braces{w_0,w_1}}{t} \\
																		&= \Mgprod{\Lambda_2, \ldots, \Lambda_d}{\widehat{C}_1}{\min\braces{w_0,w_1}} (w_2, \ldots, w_d) ~.
	\end{align*}
	\item The second result is obvious since $\TDL{\cdot}{\Pi} \equiv 0$ and $C(0, u) = 0$. 
	\item The third result follows immediately from the pointwise inequality of $C$. 
	\item A direct calculation yields
	\begin{align*}
		 \Mprod{\Lambda_1, \ldots, \Lambda_d}{C}(w_{\pi(1)}, \ldots, w_{\pi(d)})	
						&= \cInt{C \rbraces{\partial_1 \Lambda_1(t, w_{\pi(1)}), \ldots, \partial_1 \Lambda_d(t, w_{\pi(d)})}}{0}{\infty}{t} \\
						&= \cInt{C^\pi \rbraces{\partial_1 \Lambda_{\pi(1)}(t, w_1), \ldots, \partial_1 \Lambda_{\pi(d)}(t, w_d)}}{0}{\infty}{t} \\
						&= \Mprod{\Lambda_{\pi(1)}, \ldots, \Lambda_{\pi(d)}}{C^\pi}(w_1, \ldots, w_d) ~.
	\end{align*}
\end{enumerate}
\end{proof}

Likewise, we will now collect some convergence results of the Markov product.
In the case of $2$-copulas, \cite{Siburg.2008} and \cite{Trutschnig.2011} introduced different metrics allowing for the joint convergence of the (general) Markov product for $2$-copulas, i.e. 
\begin{equation*}
	\phi_C(C_1^n, C_2^n) \rightarrow \phi_C(C_1, C_2) ~.
\end{equation*}
We will present very similar conditions in the case of tail dependence functions. 
Note, however, that due to the different domains of copulas and tail dependence functions, convergence of the partial derivatives coincides with the definition for $L^1$-convergence of the partial derivatives in the case of copulas and with pointwise convergence in the case of tail dependence functions.

\begin{proposition}
Suppose $\Lambda_1, \ldots, \Lambda_d$ are bivariate tail dependence functions and $C$ is a $d$-copula. 
\begin{enumerate}
	\item Let $C_n \in \Cd{d}$ with $C_n \rightarrow C$ pointwise. 
	Then
	\begin{equation*}
		\Mgprod{\Lambda_1, \ldots, \Lambda_d}{C_n}{w_0} \rightarrow \Mgprod{\Lambda_1, \ldots, \Lambda_d}{C}{w_0}
	\end{equation*}
	pointwise.
	\item Let $\Lambda_i^n \in \Md{2}$ with $\Lambda_i^n \rightarrow \Lambda_i$ pointwise. 
	Then
	\begin{equation*}
		\Mgprod{\Lambda^n_1, \ldots, \Lambda^n_d}{C}{w_0} \rightarrow \Mgprod{\Lambda_1, \ldots, \Lambda_d}{C}{w_0}
	\end{equation*}  
	pointwise.
	\item Let $\Lambda_i^n \in \Md{2}$ with $\norm{\partial_1 \Lambda_i^n(\cdot, w) - \partial_1 \Lambda_i(\cdot, w)}_{L^1(\R_+)} \rightarrow 0$, then 
	\begin{equation*}
		\Mprod{\Lambda^n_1, \ldots, \Lambda^n_d}{C} \rightarrow \Mprod{\Lambda_1, \ldots, \Lambda_d}{C}
	\end{equation*}
	pointwise.
\end{enumerate}
\end{proposition}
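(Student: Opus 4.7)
The plan is to treat each item by a standard convergence argument: dominated convergence for (1) and (2) — where the integration runs over the finite interval $[0, w_0]$ and the integrand $C(\partial_1 \Lambda_1(t, w_1), \ldots, \partial_1 \Lambda_d(t, w_d))$ is uniformly bounded by $1$, so a constant majorant is readily available — and a direct $\ell^1$-Lipschitz estimate for (3), where the integration over $[0, \infty)$ rules out any such majorant.

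For item (1), I would fix $\boldm{w} \in \R_+^{d+1}$ and observe that at every $t$ where the partial derivatives $\partial_1 \Lambda_i(t, w_i)$ exist (hence for almost every $t$), the argument of $C_n$ is a fixed point in $[0, 1]^d$, so the hypothesis $C_n \to C$ pointwise yields pointwise a.e.\ convergence of the integrand. Dominated convergence with the constant majorant $1$ on $[0, w_0]$ then delivers the conclusion.

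Item (2) is the main obstacle, since pointwise convergence $\Lambda_i^n \to \Lambda_i$ does not a priori imply convergence of the partial derivatives in the first argument. I would exploit the concavity asserted in Proposition~\ref{prop:tail_dep_func}: for fixed $w_i$, the maps $t \mapsto \Lambda_i^n(t, w_i)$ are concave on $\R_+$ and converge pointwise to the concave limit $t \mapsto \Lambda_i(t, w_i)$. A classical result on concave functions then guarantees that $\partial_1 \Lambda_i^n(t, w_i) \to \partial_1 \Lambda_i(t, w_i)$ at every point of differentiability of the limit, and hence for almost every $t$. Continuity of $C$ propagates this to pointwise a.e.\ convergence of the integrand, which is once more bounded by $1$ on $[0, w_0]$, so dominated convergence closes the case.

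For item (3), I would sidestep DCT and estimate directly via the $\ell^1$-Lipschitz inequality $|C(\boldm{x}) - C(\boldm{y})| \leq \sum_{i=1}^d |x_i - y_i|$, which holds for every $d$-copula. Passing this bound under the integral yields
\begin{equation*}
	\abs{\Mprod{\Lambda_1^n, \ldots, \Lambda_d^n}{C}(\boldm{w}) - \Mprod{\Lambda_1, \ldots, \Lambda_d}{C}(\boldm{w})} \leq \sum_{i=1}^d \norm{\partial_1 \Lambda_i^n(\cdot, w_i) - \partial_1 \Lambda_i(\cdot, w_i)}_{L^1(\R_+)} ,
\end{equation*}
and the right-hand side tends to zero by hypothesis.
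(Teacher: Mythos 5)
Your proposal is correct and follows essentially the same route as the paper: dominated convergence for the lifting in items (1) and (2), combined with the classical fact (the paper cites Tsuji's lemma) that pointwise convergence of the concave functions $t \mapsto \Lambda_i^n(t,w_i)$ forces almost-everywhere convergence of $\partial_1 \Lambda_i^n(\cdot,w_i)$, and the coordinatewise Lipschitz bound for copulas in item (3). The only cosmetic differences are your use of the constant majorant $1$ on the finite interval $[0,w_0]$ where the paper dominates by $\partial_1 \Lambda_1(\cdot,w_1)$, and your applying the continuity of $C$ directly to the integrand in (2) rather than first splitting coordinatewise via the Lipschitz property of $C$.
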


\begin{proof}
\begin{enumerate}
	\item A combination of $C(\partial_1 \Lambda_1 (t, w_1), \ldots, \partial_1 \Lambda_d (t, w_d)) \leq \partial_1 \Lambda_1(t, w_1)$ and the dominated convergence theorem yields the desired result.
	\item Due to the Lipschitz continuity of $C$, we have
	\begin{align*}
		\abs{\Mgprod{\Lambda^n_1, \ldots, \Lambda^n_d}{C}{w_0}(w) - \Mgprod{\Lambda_1, \ldots, \Lambda_d}{C}{w_0}(w)}	&\leq \sum\limits_{i = 1}^d \cInt{\abs{\partial_1 \Lambda_i^n(t, w_i) - \partial_1 \Lambda_i(t, w_i)}}{0}{w_0}{t} ~.
	\end{align*}
	Thus it suffices to consider each integral separately. 
	As tail dependence functions are concave, Lemma~1 in \cite{Tsuji.1952} yields that $\Lambda_i^n(t, w_i) \rightarrow \Lambda_i (t,w_i)$ holds pointwise if and only if $\partial_1 \Lambda_i^n(t, w_i) \rightarrow \partial_1 \Lambda_i (t,w_i)$ holds for almost all $t \in [0, w_0]$ and all fixed $w \in \R_+$.
	Thus, an application of the dominated convergence theorem in combination with $0 \leq \partial_1 \Lambda_i^n \leq 1$ yields the desired result.
	\item Again, due to the Lipschitz continuity of $C$, we have 
	\begin{equation*}
		\abs{\Mprod{\Lambda^n_1, \ldots, \Lambda^n_d}{C}(w) - \Mprod{\Lambda_1, \ldots, \Lambda_d}{C}(w)}	\leq \sum\limits_{i = 1}^d \cInt{\abs{\partial_1 \Lambda_i^n(t, w_i) - \partial_1 \Lambda_i(t, w_i)}}{0}{\infty}{t} \rightarrow 0 ~. \qedhere
	\end{equation*}
\end{enumerate}
\end{proof}

In analogy to the binary product $*$ on $\Cd{2} \times \Cd{2}$ induced by $\Pi$, we introduce $*$ on $\Md{2} \times \Md{2}$ via
\begin{equation*}
	(\Lambda_1 * \Lambda_2) (w_1, w_2) 	:= \Mprod{\Lambda_1^T, \Lambda_2}{\Pi}(w_1, w_2)
										= \cInt{\partial_2 \Lambda_1 (w_1, t) \partial_1 \Lambda_2(t, w_2)}{0}{\infty}{t} ~.
\end{equation*}
Its properties closely resemble those of the Markov product on $\Cd{2} \times \Cd{2}$.
In particular, $\TDLn^+$ and $\TDLprod$ are the unit and null element of $*$, respectively, and $*$ is associative as well as skew-symmetric, i.e.
\begin{equation*}
	(\Lambda_1 * \Lambda_2)^T = \Lambda_2^T * \Lambda_1^T ~. 
\end{equation*}
With these basic algebraic properties, we will develop two conditions under which the Markov product commutes with the tail dependence function, i.e.
\begin{equation*}
	\TDL{\boldm{w}}{C_1 * C_2} = \TDL{\cdot}{C_1} * \TDL{\cdot}{C_2} (\boldm{w}) ~.
\end{equation*}
The first approach utilizes the Lipschitz continuity of $C$ and follows an idea from \cite{Jaworski.2015}.
Theorem~7 therein derives the tail behaviour of the $C$-lifting 
\begin{equation*}
	\TDL{(w_0, \ldots, w_d)}{\Mgprod{C_1, \ldots, C_d}{C}{\cdot}} = \Mgprod{\TDL{\cdot}{C_1}, \ldots, \TDL{\cdot}{C_d}}{C}{w_0}(w_0, \ldots, w_d)
\end{equation*}
under a Sobolev-type condition imposed on $C_1, \ldots, C_d$.

\begin{theorem} \label{thm:connectionTDF_MP_lipschitz}
Suppose that $C$ is a $d$-copula and that $C_1, \ldots, C_d$ are $2$-copulas with existing bivariate tail dependence functions, which fulfil the Sobolev-type condition
\begin{equation} \label{eqn:sobolevtypeCondition}
	\lim\limits_{s \searrow 0} \cInt{\abs{\partial_1 C_i(s t, s w) \indFunc{\cbraces{0, \frac{1}{s}}}(t) - \partial_1 \TDL{(t, w)}{C_i}}}{0}{\infty}{t} = 0
\end{equation}
for all $w \in \R_+$ and all $i = 1, \ldots, d$. 
Then,
\begin{equation*}
	\Mprod{\TDL{\cdot}{C_1}, \ldots, \TDL{\cdot}{C_d}}{C}(\boldm{w}) = \TDL{\boldm{w}}{\Mprod{C_1, \ldots, C_d}{C}}
\end{equation*}
for all $\boldm{w} \in \R_+^d$, or, equivalently,
\begin{figure}[h]
\centering
	\begin{tikzcd}
\Cd{2}^d \arrow[r, "\phi_C"] \arrow[d, "\Lambda(\cdot \; ; \; C_i)"'] \arrow[dr, phantom, "\circlearrowleft"] 	& \Cd{d} \arrow[d, "\Lambda(\cdot \; ; \; C_i)"] \\
\Md{2}^d \arrow[r, "\phi_C"']                                         				& \Md{d}
\end{tikzcd} 
\end{figure}
\end{theorem}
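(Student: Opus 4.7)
My approach starts from the defining limit of the tail dependence function applied to the copula $\Mprod{C_1, \ldots, C_d}{C}$, namely
\begin{equation*}
	\TDL{\boldm{w}}{\Mprod{C_1, \ldots, C_d}{C}} = \lim_{s \searrow 0} \frac{1}{s} \cInt{C \rbraces{\partial_1 C_1(t, sw_1), \ldots, \partial_1 C_d(t, sw_d)}}{0}{1}{t}.
\end{equation*}
The substitution $t = s\tau$ absorbs the prefactor $1/s$ and turns the upper limit into $1/s$. Since $C$ is increasing in each argument and vanishes as soon as any argument equals $0$, the integral can then be rewritten as one over $[0, \infty)$ by inserting $\indFunc{[0, 1/s]}(\tau)$ into every coordinate of $C$ without changing its value.

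Having arrived at this form, the next step is to compare the resulting integrand with $C\rbraces{\partial_1 \TDL{(\tau, w_1)}{C_1}, \ldots, \partial_1 \TDL{(\tau, w_d)}{C_d}}$, whose integral over $[0, \infty)$ is precisely $\Mprod{\TDL{\cdot}{C_1}, \ldots, \TDL{\cdot}{C_d}}{C}(\boldm{w})$. Using that every $d$-copula is Lipschitz continuous with constant $1$ in each argument, the pointwise difference of the two integrands is majorised by
\begin{equation*}
	\sum_{i=1}^d \abs{\partial_1 C_i(s\tau, sw_i) \indFunc{[0, 1/s]}(\tau) - \partial_1 \TDL{(\tau, w_i)}{C_i}},
\end{equation*}
and each of the $d$ integrals over $[0, \infty)$ tends to $0$ as $s \searrow 0$ by the Sobolev-type hypothesis \eqref{eqn:sobolevtypeCondition}. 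Passing to the limit therefore identifies $\TDL{\boldm{w}}{\Mprod{C_1, \ldots, C_d}{C}}$ with $\Mprod{\TDL{\cdot}{C_1}, \ldots, \TDL{\cdot}{C_d}}{C}(\boldm{w})$, which is the asserted identity and makes the displayed diagram commute.

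The principal obstacle, in my view, is the clean bookkeeping with the indicator function: one must verify that simultaneously inserting $\indFunc{[0, 1/s]}(\tau)$ into all $d$ coordinates of $C$ does not alter the integral, which is a short argument using monotonicity of $C$ and the boundary condition $C(u_1, \ldots, u_d) = 0$ whenever some $u_i = 0$. After that, the entire remainder is a standard Lipschitz-plus-$L^1$-convergence estimate together with the hypothesis; no further regularity of $C$ beyond the copula axioms is needed.
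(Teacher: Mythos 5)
Your proposal is correct and follows essentially the same route as the paper: substitute $t = s\tau$ in the defining limit, extend the integral to $\R_+$ via the indicator $\indFunc{[0,1/s]}$ (justified by groundedness of $C$), bound the difference of the integrands by $\sum_{i=1}^d \abs{\partial_1 C_i(s\tau, sw_i)\indFunc{[0,1/s]}(\tau) - \partial_1 \TDL{(\tau,w_i)}{C_i}}$ using the Lipschitz continuity of $C$, and conclude with the Sobolev-type hypothesis. The only cosmetic difference is that you insert the indicator into each coordinate of $C$ while the paper multiplies the value of $C$ by it, which are equivalent by groundedness.
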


\begin{proof}
The Lipschitz continuity and groundedness of $C$ yield
\begin{align*}
&\abs{C \rbraces{\partial_1 C_1(s\tau, sw_1), \ldots, \partial_1 C_d(s\tau, sw_d)} \indFunc{\cbraces{0, \frac{1}{s}}}(\tau) - 
				C \rbraces{\partial_1 \TDL{(\tau, w_1)}{C_1}, \ldots, \partial_1 \TDL{(\tau, w_d)}{C_d}}} \\
&\leq \sum\limits_{\ell = 1}^{d} \abs{\partial_1 C_\ell(s\tau, sw_\ell)\indFunc{\cbraces{0, \frac{1}{s}}}(\tau) - \partial_1 \TDL{(\tau, w_\ell)}{C_\ell}} ~.
\end{align*}
Thus, 
\begin{align*}	
  &\abs{\TDL{\boldm{w}}{\Mprod{C_1, \ldots, C_d}{C}} - \Mprod{\TDL{\cdot}{C_1}, \ldots, \TDL{\cdot}{C_d}}{C}(\boldm{w})} \\
	\leq \lim\limits_{s \searrow 0} 	&\sum\limits_{\ell = 1}^d \lebesgue{\abs{\partial_1 C_\ell(s\tau, sw_\ell)\indFunc{\cbraces{0, \frac{1}{s}}}(\tau) - \partial_1 \TDL{(\tau, w_\ell)}{C_\ell}}}{\R_+}{\tau}
			= 0 ~. \qedhere
\end{align*}
\end{proof}

Using the concept of strict tail dependence functions yields a more feasible sufficient condition for Theorem~\ref{thm:connectionTDF_MP_lipschitz}.
We call a tail dependence function strict if it has margins in the sense of \cite{Nelsen.2006}.

\begin{definition}
Let $\TDLn$ be a bivariate tail dependence function. 
We call $\TDLn$ strict if 
\begin{equation*}
	\lim\limits_{t \rightarrow \infty} \TDLn(w_1, t) = w_1 \; \text{ and } \; \lim\limits_{t \rightarrow \infty} \TDLn(t, w_2) = w_2
\end{equation*}
holds for all $(w_1, w_2)$ in $\R_+^2$.
\end{definition}

\begin{remark} \label{remark:cor_thm_commutation_mp}
Assume that in addition to the almost everywhere pointwise convergence of the partial derivatives, the tail dependence functions of $C_i$ are strict. 
Then an application of Scheffé's Lemma (see, \cite{Novinger.1972}) yields 
\begin{equation*}
	\lim\limits_{s \searrow 0} \cInt{\abs{\partial_1 C_i(s t, s w) \indFunc{\cbraces{0, \frac{1}{s}}}(t) - \partial_1 \TDL{(t, w)}{C_i}}}{0}{\infty}{t} = 0 
\end{equation*}
for all $i = 1, \ldots, d$, which implies 
\begin{equation*}
	\Mprod{\TDL{\cdot}{C_1}, \ldots, \TDL{\cdot}{C_d}}{C}(\boldm{w}) = \TDL{\boldm{w}}{\Mprod{C_1, \ldots, C_d}{C}} 
\end{equation*}
due to Theorem~\ref{thm:connectionTDF_MP_lipschitz}.
\end{remark}

\begin{example}
Suppose $C_\phi$ is an Archimedean $2$-copula with generator $\phi$, which is regularly varying in $0$ with parameter $-\alpha < 0$.
Then its tail dependence function equals
\begin{equation*}
	\TDL{\boldm{w}}{C_\phi}	= \begin{cases}
							\rbraces{w_1^{-\alpha} + w_i^{-\alpha}}^{-1/\alpha}		&\text{ if } \alpha \in (0, \infty)	\\
							\TDLn^+(\boldm{w})										&\text{ if } \alpha = \infty
					  \end{cases} 
\end{equation*}
see, for example, \cite{Charpentier.2009}.
Thus, $\TDL{\cdot}{C_\phi}$ is strict. 
The convergence of the partial derivatives follows similar to the proof of Proposition~2.5 in \cite{Joe.2010}, when combined with the limiting behaviour
\begin{equation*}
	\lim\limits_{x \rightarrow \infty} \frac{\phi'(x)}{c \alpha x^{\alpha - 1} \ell(x)} = 1
\end{equation*}
given in Theorem~1.7.2 of \cite{Bingham.1987}, where $\ell$ is a slowly varying function.
Therefore, Archimedean copulas fulfil the conditions given in Remark~\ref{remark:cor_thm_commutation_mp}.
\end{example}

The next approach does not utilize the Lipschitz continuity of the copula $C$ and yields a different condition in terms of the convergence of the partial derivatives. 

\begin{theorem} \label{thm:connectionTDF_MP_reduction}
Suppose $C$ is a $d$-copula and that the $2$-copulas $C_1, \ldots, C_d$ as well as their generalized Markov product have a tail dependence function. 
If for all $w_i \in \R_+$ and almost all $t \in \R_+$
\begin{equation*}
	\lim\limits_{s \searrow 0} \partial_1 C_i(s t, s w_i) = \partial_1 \TDL{(t, w_i)}{C_i} 	\text{ for all } i = 1, \ldots, d ~, 
\end{equation*}
then 
\begin{equation*}
	\Mprod{\TDL{\cdot}{C_1}, \ldots, \TDL{\cdot}{C_d}}{C}(\boldm{w}) \leq \TDL{\boldm{w}}{\Mprod{C_1, \ldots, C_d}{C}} ~.
\end{equation*}
Additionally, if there exists an $\ell \in \braces{1, \ldots, d}$ such that
\begin{equation*}
	\partial_1 C_k(s \tau, s w_\ell) \indFunc{\cbraces{0, \frac{1}{s}}}(\tau)  \leq g_{w_\ell}(\tau)
\end{equation*} 
for all $w_\ell \in [0, 1]$ and some family $(g_w)_{w \in [0, 1]}$ of integrable functions, it holds
\begin{equation*}
	\Mprod{\TDL{\cdot}{C_1}, \ldots, \TDL{\cdot}{C_d}}{C}(\boldm{w}) = \TDL{\boldm{w}}{\Mprod{C_1, \ldots, C_d}{C}} ~.
\end{equation*}
\end{theorem}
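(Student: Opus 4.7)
The plan is to express both sides as integrals over $\R_+$ via a common change of variables, and then deduce the inequality from Fatou's lemma and the equality from the dominated convergence theorem.

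Starting from the defining limit for $\TDL{\boldm{w}}{\Mprod{C_1, \ldots, C_d}{C}}$, I would expand $\Mprod{C_1, \ldots, C_d}{C}(s\boldm{w})$ using its integral representation and substitute $t = s\tau$, so that
\begin{equation*}
\TDL{\boldm{w}}{\Mprod{C_1, \ldots, C_d}{C}} = \lim_{s \searrow 0} \cInt{C\rbraces{\partial_1 C_1(s\tau, sw_1), \ldots, \partial_1 C_d(s\tau, sw_d)} \indFunc{\cbraces{0, \frac{1}{s}}}(\tau)}{0}{\infty}{\tau}.
\end{equation*}
Both sides of the claim are now integrals over $\R_+$ with respect to $\tau$. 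By continuity of $C$ and the hypothesis $\partial_1 C_i(s\tau, sw_i) \rightarrow \partial_1 \TDL{(\tau, w_i)}{C_i}$, the integrand converges pointwise a.e.\ in $\tau$, as $s \searrow 0$, to $C\rbraces{\partial_1 \TDL{(\tau, w_1)}{C_1}, \ldots, \partial_1 \TDL{(\tau, w_d)}{C_d}}$, which is precisely the integrand defining $\Mprod{\TDL{\cdot}{C_1}, \ldots, \TDL{\cdot}{C_d}}{C}(\boldm{w})$.

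For the inequality, I would pick any null sequence $s_n \searrow 0$ and apply Fatou's lemma to the nonnegative integrands. Since $\Mprod{C_1, \ldots, C_d}{C}$ is assumed to possess a tail dependence function, the integral limit above is a genuine limit rather than merely a $\liminf$, so Fatou directly delivers $\Mprod{\TDL{\cdot}{C_1}, \ldots, \TDL{\cdot}{C_d}}{C}(\boldm{w}) \leq \TDL{\boldm{w}}{\Mprod{C_1, \ldots, C_d}{C}}$.

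For the equality, I would upgrade Fatou to the dominated convergence theorem via the integrable bound $g_{w_\ell}$. The crucial observation is that every copula is pointwise dominated by each of its univariate marginals, $C(u_1, \ldots, u_d) \leq u_\ell$, so
\begin{equation*}
C\rbraces{\partial_1 C_1(s\tau, sw_1), \ldots, \partial_1 C_d(s\tau, sw_d)} \indFunc{\cbraces{0, \frac{1}{s}}}(\tau) \leq \partial_1 C_\ell(s\tau, sw_\ell) \indFunc{\cbraces{0, \frac{1}{s}}}(\tau) \leq g_{w_\ell}(\tau)
\end{equation*}
supplies the required integrable majorant, and dominated convergence promotes the previous inequality to an equality. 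The main obstacle is the careful bookkeeping of the null-set hypothesis \emph{almost all $t \in \R_+$}: for fixed $\boldm{w}$ one has to note that the finite union over $i = 1, \ldots, d$ of the exceptional sets is again null, so the pointwise convergence of all $d$ partial derivatives holds simultaneously outside a single null set, after which the two standard convergence theorems apply directly.
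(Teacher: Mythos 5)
Your proposal is correct and follows essentially the same route as the paper's proof: the change of variables $t = s\tau$ to rewrite the limit as an integral over $\R_+$ with the indicator $\indFunc{\cbraces{0, \frac{1}{s}}}$, Fatou's lemma for the inequality, and dominated convergence for the equality using the majorant obtained from $C \leq C^+ \leq \partial_1 C_\ell(s\tau, sw_\ell)$. Your additional remarks on the liminf-versus-limit point and on combining the exceptional null sets are correct refinements of the same argument.
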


\begin{proof}
By the definition of the tail dependence function and an application of Fatou's lemma for positive measurable functions, it holds that
\begin{align*}
	\TDL{\boldm{w}}{\Mprod{C_1, \ldots, C_d}{C}}	&= \lim\limits_{s \searrow 0} \frac{1}{s} \cInt{C \rbraces{\partial_1 C_1(t, sw_1), \ldots, \partial_1 C_d(t, sw_d)}}{0}{1}{t} \\
											&= \lim\limits_{s \searrow 0} \frac{1}{s} \cInt{C \rbraces{\partial_1 C_1(s\tau, sw_1), \ldots, \partial_1 C_d(s\tau, sw_d)}s}{0}{\frac{1}{s}}{\tau} \\
											&= \lim\limits_{s \searrow 0} \lebesgue{C \rbraces{\partial_1 C_1(s\tau, sw_1), \ldots, \partial_1 C_d(s\tau, sw_d)}\indFunc{\cbraces{0, \frac{1}{s}}}(\tau)}{\R_+}{\tau} \\
											&\geq \lebesgue{\lim\limits_{s \searrow 0} C \rbraces{\partial_1 C_1(s\tau, sw_1), \ldots, \partial_1 C_d(s\tau, sw_d)}\indFunc{\cbraces{0, \frac{1}{s}}}(\tau)}{\R_+}{\tau} \\
											&= \lebesgue{C \rbraces{\partial_1 \TDL{(\tau, w_1)}{C_1}, \ldots, \partial_1 \TDL{(\tau, w_d)}{C_d}}}{\R_+}{\tau}  \\
											&=\Mprod{\TDL{\cdot}{C_1}, \ldots, \TDL{\cdot}{C_d}}{C}(\boldm{w}) ~.
\end{align*}
If one partial derivative $\ell$ is dominated by an integrable function $g_{w_\ell}$ we have that for $\tau \leq 1/s$
\begin{align*}
	C \rbraces{\partial_1 C_1(s\tau, sw_1), \ldots, \partial_1 C_d(s\tau, sw_d)}
		&\leq C^+ \rbraces{\partial_1 C_1(s\tau, sw_1), \ldots, \partial_1 C_d(s\tau, sw_d)} \\
		&\leq \partial_1 C_\ell(s \tau, s w_\ell) \leq g_{w_\ell}(\tau) ~.
\end{align*}
The desired result follows from the dominated convergence theorem. 
\end{proof}

\begin{example}
If $C$ has a tail dependence function and continuous second-order partial derivatives, then \cite{Joe.2010} have shown that
\begin{equation*}
	\lim\limits_{s \searrow 0} \partial_1 C(s t, s w) = \partial_1 \TDL{(t, w)}{C} 	
\end{equation*}
holds for almost all $t$ and all $w$ in $\R_+$, thus fulfilling the condition of Theorem~\ref{thm:connectionTDF_MP_reduction}.
\end{example}

\begin{example}
Suppose $C$ is the survival copula of an extreme value copula $C^{EV}$ given by $\Lambda \in \Md{2}$, see Equation~\eqref{eqn:extreme_value_copula_def}. 
Then $C$ has tail dependence function $\Lambda$ and a straight-forward calculation using the positive homogeneity of degree $0$ of $\partial_1 \Lambda$ yields
\begin{align*}
	\partial_1 C(su, sv)	&= 1 - \partial_1 C^{EV}(1 - su, 1-sv)	\\
							&= 1 - \frac{C^{EV}(1-su, 1-sv)}{1 - su} \rbraces{1 - \partial_1 \Lambda (-\log(1-su), -\log(1-sv))} \\
							&= 1 - \underbrace{\frac{C^{EV}(1-su, 1-sv)}{1 - su}}_{\rightarrow 1 \text{ as } s \searrow 0} \rbraces{1 - \partial_1 \Lambda \rbraces{1, \underbrace{\frac{\log(1-sv)}{\log(1-su)}}_{\rightarrow v/u \text{ as } s \searrow 0 }}}~.
\end{align*}
As $t \mapsto \partial_1 \Lambda(w, t)$ is increasing and bounded from below by 0 and from above by $1$, we have
\begin{equation*}
	\lim\limits_{s \searrow 0} \partial_1 C(su, sv)	= \partial_1 \Lambda \rbraces{1, \frac{v}{u}} = \partial_1 \Lambda \rbraces{u, v} ~.
\end{equation*}
\end{example}

The lower bound behaviour stated in Theorem~\ref{thm:connectionTDF_MP_reduction} is generally the best result possible, as can be seen from the following example. 

\begin{example}
Consider the lower Fréchet-Hoeffding bound $C^-$, which is symmetric and left invertible, i.e., $(C^-)^T * C^- = C^+$.
Then an application of Theorem 5.5.3 in \cite{Durante.2015} yields
\begin{equation*}
	\Mprod{C^-, C^-}{C} = C^- * C^- = (C^-)^T * C^- = C^+ ~.
\end{equation*}
Hence for $\boldm{w} = (w_1, w_2)$ in $\R_+^2$,
\begin{equation*}
	\Mprod{\TDL{\cdot}{C^-}, \TDL{\cdot}{C^-}}{C}(\boldm{w}) = 0 \leq \min\braces{w_1, w_2} = \TDL{\boldm{w}}{C^+} = \TDL{w}{\Mprod{C^-, C^-}{C}} ~,
\end{equation*} 
which is strict for every $\boldm{w}$ in $(0, \infty)^2$. 
\end{example}

Let us now study some examples to investigate the behaviour of the Markov product on $\Md{2}$ for different $2$-copulas $C$.

\begin{example}
Let $C \in \Cd{d}$ and $\Lambda_1, \ldots, \Lambda_d \in \M^+$, where
\begin{align*}
	\mathcal{M}^+ 	&:= \set{\TDLn}{\partial_1 \TDLn (w) = \alpha \indFunc{\cbraces{0, \frac{\beta}{\alpha} w_2}}(w_1) \text{ for some } \alpha, \beta \in (0, 1]} \\
					&\ = \set{\TDLn}{\TDLn (w) = \min\braces{\alpha w_1, \beta w_2} \text{ for some } \alpha, \beta \in (0, 1]} ~.
\end{align*}
Then 
\begin{align*}
	\Mprod{\Lambda_1, \ldots, \Lambda_d}{C}(\boldm{w})	&= C \rbraces{\alpha_1, \ldots, \alpha_d} \TDL{\frac{\beta_1}{\alpha_1} w_1, \ldots, \frac{\beta_d}{\alpha_d} w_d}{C^+} ~.
\end{align*}
The influence of the choice of $C$ on the product is depicted in Figure~\ref{fig:prodSimSim}.
For the two tail dependence functions $\Lambda_1(w_1, w_2) = \min\braces{\frac{2w_1}{3}, w_2}$ and $\Lambda_2(w_1, w_2) = \min\braces{\frac{w_1}{2}, \frac{w_2}{4}}$, the resulting product $\Mprod{\Lambda_1, \Lambda_2}{C}$ is shown by the red line for the choices $C=C^-$, $C=\Pi$ and $C = C^+$, respectively.\\
\begin{figure}
	\centering
	\begin{subfigure}[b]{0.32\textwidth}
          \centering
          \resizebox{\linewidth}{!}{\begin{tikzpicture}[declare function={ 
		tdf(\t) 	= min(\t, 1-\t); 
		tdf1(\t) 	= min(2*\t/3, 1-\t);
		tdf2(\t) 	= min(0.5*\t, 0.25*(1-\t));
		cop(\u,\v)	= max(\u + \v - 1, 0);
		tdf12C(\t) 	= cop(2/3, 0.5)*min(3/2*\t, 0.5*(1-\t));
}]
        \begin{axis}[mlineplot, width=7cm,height=5cm]
            \addplot[name path=f, domain=0:1, samples=101, color=gray!30]{tdf(\x)};
            \addplot[name path=f, domain=0:1, samples=101, color=black!60]{tdf1(\x)};
            \addplot[name path=f, domain=0:1, samples=101, color=black!60]{tdf2(\x)};
            \addplot[name path=f, domain=0:1, samples=101, color=red!30]{tdf12C(\x)};
        \end{axis}
\end{tikzpicture}}
          \caption{$\Mprod{\Lambda_1, \Lambda_2}{C^-}$.}
          \label{fig:prodSimSimA}
     \end{subfigure}
     \begin{subfigure}[b]{0.32\textwidth}
          \centering
          \resizebox{\linewidth}{!}{%Markov product between min(w_1/3, w_2) and min(0.5*w_1, w_2 / 4)
\begin{tikzpicture}[
	declare function={ 
		tdf(\t) 	= min(\t, 1-\t); 
		tdf1(\t) 	= min(2*\t/3, 1-\t);
		tdf2(\t) 	= min(0.5*\t, 0.25*(1-\t));
		cop(\u,\v)	= \u*\v;
		tdf12C(\t) 	= cop(2/3, 0.5)*min(3/2*\t, 0.5*(1-\t));
}]
        \begin{axis}[mlineplot, width=7cm,height=5cm]
            \addplot[name path=f, domain=0:1, samples=101, color=gray!30]{tdf(\x)};
            \addplot[name path=f, domain=0:1, samples=101, color=black!60]{tdf1(\x)};
            \addplot[name path=f, domain=0:1, samples=101, color=black!60]{tdf2(\x)};
            \addplot[name path=f, domain=0:1, samples=101, color=red!30]{tdf12C(\x)};
        \end{axis}
\end{tikzpicture}}
          \caption{$\Mprod{\Lambda_1, \Lambda_2}{\Pi}$.}
          \label{fig:prodSimSimB}
     \end{subfigure}
     \begin{subfigure}[b]{0.32\textwidth}
          \centering
          \resizebox{\linewidth}{!}{%Markov product between min(w_1/3, w_2) and min(0.5*w_1, w_2 / 4)
\begin{tikzpicture}[
	declare function={ 
		tdf(\t) 	= min(\t, 1-\t); 
		tdf1(\t) 	= min(2*\t/3, 1-\t);
		tdf2(\t) 	= min(0.5*\t, 0.25*(1-\t));
		cop(\u,\v)	= min(\u, \v);
		tdf12C(\t) 	= cop(2/3, 0.5)*min(3/2*\t, 0.5*(1-\t));
}]
        \begin{axis}[mlineplot, width=7cm,height=5cm]
            \addplot[name path=f, domain=0:1, samples=101, color=gray!30]{tdf(\x)};
            \addplot[name path=f, domain=0:1, samples=101, color=black!60]{tdf1(\x)};
            \addplot[name path=f, domain=0:1, samples=101, color=black!60]{tdf2(\x)};
            \addplot[name path=f, domain=0:1, samples=101, color=red!30]{tdf12C(\x)};
        \end{axis}
\end{tikzpicture}}
          \caption{$\Mprod{\Lambda_1, \Lambda_2}{C^+}$.}
          \label{fig:prodSimSimC}
     \end{subfigure}
     \caption{Plots of the product $\Mprod{\Lambda_1, \Lambda_2}{C}(t, 1-t)$ for different choices of $C$ (red line) following Remark~\ref{remark:bivariate_homogeneity}.
     The tail dependence functions $\Lambda_1(t, 1-t) = \min\braces{\frac{2t}{3}, 1-t}$ and $\Lambda_2(t, 1-t) = \min\braces{\frac{t}{2}, \frac{1-t}{4}}$ are depicted in black, the upper bound $\Lambda^+$ in grey.}
     \label{fig:prodSimSim}
\end{figure}
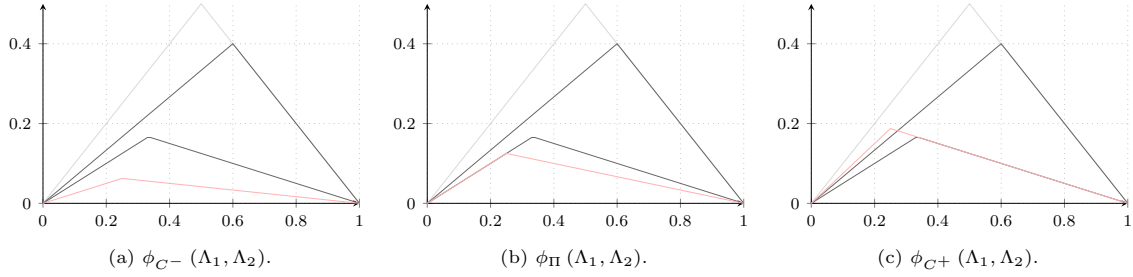
Taking the product of $\Lambda_1 \in \M^+$ and an arbitrary $\Lambda_2 \in \Md{2}$  yields
\begin{equation*}
	\Mprod{\Lambda_1, \Lambda_2}{C}(w_1, w_2) = \cInt{C(\alpha, \partial_1 \Lambda_2(t, w_2))}{0}{\frac{\beta}{\alpha} w_1}{t} ~.
\end{equation*} 
The above expression can be explicitly calculated  for some choices of $C$ (see, Figure~\ref{fig:prodSimCop}): 
\begin{enumerate}
	\item If $C = \Pi$, then $\Mprod{\Lambda_1, \Lambda_2}{\Pi}(w_1, w_2) = \Lambda_2 \rbraces{\beta w_1, \alpha w_2}$. 
	\item If $C = C^-$, we have
	\begin{equation*}
		\Mprod{\Lambda_1, \Lambda_2}{C^-}(w_1, w_2)	= \Lambda_2(p^* \wedge \beta w_1, w_2) + \TDL{\alpha p^*, \beta w_1}{C^+} - p^* ~,
	\end{equation*} 
	since the monotonicity of $\partial_1 \Lambda_2$ yields the existence of a $p^* = p^*(\alpha, w_2) \geq 0$  with
	\begin{equation*}
		\partial_1 \Lambda_2(t, w_2) + \alpha -1 \geq 0 \text{ for all } t \leq p^* \text{ and } \partial_1 \Lambda_2(t, w_2) + \alpha - 1 \leq 0 \text{ for all } t > p^* ~.
	\end{equation*}
	\item By a similar argument, for $C = C^+$, it holds
	\begin{align*}
	\Mprod{\Lambda_1, \Lambda_2}{C^+}(w_1, w_2)	&= \TDL{\alpha p^*, \beta w_1}{C^+} + \Lambda_2 \rbraces{\frac{\beta}{\alpha} w_1, w_2} - \Lambda_2 \rbraces{p^* \wedge \frac{\beta}{\alpha} w_1, w_2} ~,
	\end{align*}
	where $p^* = p^*(1-\alpha, w_2)$. 
\end{enumerate}
\begin{figure}
	\centering
	\begin{subfigure}[b]{0.32\textwidth}
          \centering
          \resizebox{\linewidth}{!}{\begin{tikzpicture}[ declare function={ 
    			tdf(\t) 	= min(\t, 1-\t); 
				tdf1(\t) 	= min(0.5*\t, 1-\t);
				tdf2(\t) 	= min(\t, 0.25*(1-\t));
				tdf3(\x,\y)	= \x*\y/(\x + \y);
				tdf3St(\t)	= (1/sqrt(1-0.5) - 1)*(1-\t);
				tdf13m(\t)	= tdf3(min(tdf3St(\t), \t), 1-\t) + min(0.5*min(tdf3St(\t), \t), 0.5*\t) - min(tdf3St(\t), \t);
			}]
        \begin{axis}[mlineplot, width=7cm,height=5cm]
            \addplot[name path=f, domain=0:1, samples=101, color=gray!30]{tdf(\x)};
            \addplot[name path=f, domain=0:1, samples=151, color=black!60]{tdf1(1-\x)};
            \addplot[name path=f, domain=0:1, samples=151, color=black!60]{tdf3(\x, 1-\x)};
            \addplot[name path=f, domain=0:1, samples=151, color=red!30]{tdf13m(1-\x)};
        \end{axis}
\end{tikzpicture}}
          \caption{$\Mprod{\Lambda_1, \Lambda_2}{C^-}$.}
          \label{fig:prodSimCopA}
     \end{subfigure}
	\begin{subfigure}[b]{0.32\textwidth}
          \centering
          \resizebox{\linewidth}{!}{\begin{tikzpicture}[ declare function={ 
    			tdf(\t) 	= min(\t, 1-\t); 
				tdf1(\t) 	= min(0.5*\t, 1-\t);
				tdf2(\t) 	= min(\t, 0.25*(1-\t));
				tdf3(\x,\y)	= \x*\y/(\x + \y);
				tdf13(\t)	= tdf3(\t, 0.5*(1-\t));
			}]
        \begin{axis}[mlineplot, width=7cm,height=5cm]
            \addplot[name path=f, domain=0:1, samples=101, color=gray!30]{tdf(\x)};
            \addplot[name path=f, domain=0:1, samples=151, color=black!60]{tdf1(1-\x)};
            \addplot[name path=f, domain=0:1, samples=151, color=black!60]{tdf3(\x, 1-\x)};
            \addplot[name path=f, domain=0:1, samples=151, color=red!30]{tdf13(1-\x)};
        \end{axis}
\end{tikzpicture}}
          \caption{$\Mprod{\Lambda_1, \Lambda_2}{\Pi}$.}
          \label{fig:prodSimCopB}
     \end{subfigure}
     \begin{subfigure}[b]{0.32\textwidth}
          \centering
          \resizebox{\linewidth}{!}{\begin{tikzpicture}[ declare function={ 
    			tdf(\t) 	= min(\t, 1-\t); 
				tdf1(\t) 	= min(0.5*\t, 1-\t);
				tdf2(\t) 	= min(\t, 0.25*(1-\t));
				tdf3(\x,\y)	= \x*\y/(\x + \y);
				tdf3St(\t)	= (1/sqrt(0.5) - 1)*(1-\t);
				tdf13p(\t)	= min(0.5*tdf3St(\t), \t) + tdf3(2*\t, 1-\t) - tdf3(min(tdf3St(\t), 2*\t), 1-\t);
			}]
        \begin{axis}[mlineplot, width=7cm,height=5cm]
            \addplot[name path=f, domain=0:1, samples=101, color=gray!30]{tdf(\x)};
            \addplot[name path=f, domain=0:1, samples=151, color=black!60]{tdf1(1-\x)};
            \addplot[name path=f, domain=0:1, samples=151, color=black!60]{tdf3(\x, 1-\x)};
            \addplot[name path=f, domain=0:1, samples=151, color=red!30]{tdf13p(1-\x)};
        \end{axis}
\end{tikzpicture}}
          \caption{$\Mprod{\Lambda_1, \Lambda_2}{C^+}$.}
          \label{fig:prodSimCopC}
     \end{subfigure}
     \caption{Plots of the product $\Mprod{\Lambda_1, \Lambda_2}{C}(t, 1-t)$ for different choices of $C$ (red line) following Remark~\ref{remark:bivariate_homogeneity}. 
     The tail dependence functions $\Lambda_1(t, 1-t) = \min\braces{\frac{t}{2}, 1-t}$ and $\Lambda_2(t, 1-t) = t(1-t)$ are depicted in black, the upper bound $\Lambda^+$ in grey.}
     \label{fig:prodSimCop}
\end{figure}
\end{example}

\section{Monotonicity of the Markov product} \label{section:monotonicity}

Figures~\ref{fig:prodSimSim} and \ref{fig:prodSimCop} already suggest a monotonicity of the Markov product whenever $C$ fulfils a negative dependence property. 
We will treat this property in more detail in this section. 
\begin{theorem} \label{thm:dependence_reduction}
Let $\Lambda_1, \ldots, \Lambda_d \in \Md{2}$ and $C \in \Cd{d}$ be negatively quadrant dependent, i.e. $C \leq \Pi$. 
Then, for $k = 1, \ldots, d$ and $\boldm{w} \in \R_+^d$,
\begin{equation*}
	\Mprod{\Lambda_1, \ldots, \Lambda_d}{C}(\boldm{w})	\leq \Mprod{\Lambda_1, \ldots, \Lambda_d}{\Pi} (\boldm{w})
												\leq \min\limits_{\substack{m = 1, \ldots, d\\ m \neq k}} \Lambda_k (w_m, w_k) ~.
\end{equation*}
\end{theorem}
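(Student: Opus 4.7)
The plan is to handle the two inequalities separately. The first one, $\Mprod{\Lambda_1, \ldots, \Lambda_d}{C}(\boldm{w}) \leq \Mprod{\Lambda_1, \ldots, \Lambda_d}{\Pi}(\boldm{w})$, is immediate from the monotonicity property established in Proposition~\ref{prop:algebraic_prop}(5) (which, since the Markov product corresponds to the lifting with $w_0=\infty$, transfers directly to $\phi$) together with the assumption $C \leq \Pi$.

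For the second inequality, my first move is to reduce the $d$-fold product to a bivariate estimate. Writing out the definition,
\[
\Mprod{\Lambda_1,\ldots,\Lambda_d}{\Pi}(\boldm{w}) \;=\; \int_0^\infty \prod_{\ell=1}^d \partial_1 \Lambda_\ell(t, w_\ell)\, dt,
\]
and each partial derivative $\partial_1 \Lambda_\ell(t, w_\ell)$ lies in $[0,1]$ by the Lipschitz property in Proposition~\ref{prop:tail_dep_func}. So I can discard every factor except the one with index $k$ and one other index $m \neq k$ to obtain
\[
\Mprod{\Lambda_1,\ldots,\Lambda_d}{\Pi}(\boldm{w}) \;\leq\; \int_0^\infty \partial_1 \Lambda_k(t, w_k)\, \partial_1 \Lambda_m(t, w_m)\, dt.
\]

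The key (and only nonroutine) step is a bathtub-type estimate. Set $f(t) := \partial_1 \Lambda_k(t, w_k)$ and $g(t) := \partial_1 \Lambda_m(t, w_m)$. By Proposition~\ref{prop:tail_dep_func}, both $f$ and $g$ are nonnegative and decreasing in $t$, $g$ takes values in $[0,1]$, and $\int_0^\infty g\, dt = \lim_{t\to\infty}\Lambda_m(t,w_m) \leq w_m$. Splitting the difference $\int_0^\infty f g\, dt - \int_0^{w_m} f\, dt$ at $t=w_m$ and using $f(t) \geq f(w_m)$ with $g(t)-1 \leq 0$ on $[0,w_m]$, and $f(t) \leq f(w_m)$ with $g(t) \geq 0$ on $[w_m,\infty)$, I bound
\[
\int_0^\infty fg\, dt - \int_0^{w_m} f\, dt \;\leq\; f(w_m)\Bigl(\int_0^\infty g\, dt - w_m\Bigr) \;\leq\; 0.
\]
Recognizing $\int_0^{w_m} f\, dt = \Lambda_k(w_m, w_k)$ and minimizing over $m \neq k$ yields the second inequality.

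The main obstacle I anticipate is really only this bathtub step; the reduction via dropping factors bounded by $1$ and the monotonicity $\phi_C \leq \phi_\Pi$ are direct consequences of results already present in the paper, so I do not expect difficulties there.
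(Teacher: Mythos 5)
Your proof is correct, and it is close in spirit to the paper's, but the execution differs enough to be worth comparing. The paper does not discard any factors: it applies Hardy's Lemma (from Bennett and Sharpley) to compare $\partial_1 \Lambda_1(\cdot, w_1)$ with $\indFunc{[0,w_1]}$, taking the whole remaining product $\partial_1\Lambda_2(\cdot,w_2)\cdots\partial_1\Lambda_d(\cdot,w_d)$ as the decreasing weight; this yields the stronger intermediate bound $\Mprod{\Lambda_1,\ldots,\Lambda_d}{\Pi}(\boldm{w}) \leq \Mgprod{\Lambda_2,\ldots,\Lambda_d}{\Pi}{w_1}(w_2,\ldots,w_d)$, and the minimum over $m \neq k$ is then extracted via the permutation property \ref{prop:algebraic_prop_perm}.\ of Proposition~\ref{prop:algebraic_prop}. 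You instead bound all but the two factors indexed $k$ and $m$ by $1$ at the outset and obtain the minimum simply by choosing which pair to retain; your bathtub estimate is then an elementary, self-contained proof of precisely the Hardy's-Lemma instance needed, since $\cInt{\partial_1\Lambda_m(s,w_m)}{0}{t}{s} = \Lambda_m(t,w_m) \leq \min\braces{t, w_m}$ is exactly the integral comparison of $\partial_1\Lambda_m(\cdot,w_m)$ with $\indFunc{[0,w_m]}$. So what you give up (the sharper bound by the lower-dimensional lifting, which the theorem does not require) you gain in avoiding the external citation. Two minor points: $f = \partial_1\Lambda_k(\cdot,w_k)$ is only defined almost everywhere, so $f(w_m)$ should be read as any value between the one-sided limits of a decreasing version of $f$ at $w_m$ (the two-interval comparison is unaffected); and the first inequality via the last item of Proposition~\ref{prop:algebraic_prop} together with $C \leq \Pi$ is exactly how the paper handles it as well, noting that the proposition's pointwise comparison of the integrands extends verbatim to the improper integral defining $\Mprod{\Lambda_1,\ldots,\Lambda_d}{C}$.
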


This result decidedly contrasts with the behaviour of the Markov product for $2$-copulas, where for example
\begin{equation*}
	 C^-  \leq C^+ = C^- * C^- ~.
\end{equation*}
Theorem~\ref{thm:dependence_reduction} is incorrect without the assumption that $C \leq \Pi$, as can be seen in Figure~\ref{fig:prodSimCop}(c). 
We will give two different proofs of Theorem~\ref{thm:dependence_reduction}; the second proof is deferred to Section~\ref{section:operator} since it uses the theory of substochastic operators developed there.

\begin{proof}
Due to $\Lambda_1 \leq \Lambda^+$, we have
\begin{equation*}
	\cInt{\partial_1 \Lambda_1(s, w_1)}{0}{t}{s} \leq \cInt{\partial_1 \Lambda^+(s, w_1)}{0}{t}{s}
\end{equation*}
for all $w_1, t \in [0, \infty)$. 
Hardy's Lemma (see, \cite{Bennett.1988}) yields for any positive decreasing function $f: \R_+ \rightarrow \R_+$ that
\begin{equation*}
	\cInt{\partial_1 \Lambda_1(s, w_1) f(s)}{0}{\infty}{s} 	\leq \cInt{\partial_1 \Lambda^+(s, w_1) f(s)}{0}{\infty}{s} 
															= \cInt{f(s)}{0}{w_1}{s} ~.
\end{equation*}
Thus, for all tail dependence functions $\Lambda_2, \ldots, \Lambda_d$ and any $\boldm{w} \in \R_+^d$
\begin{align*}
	\Mprod{\Lambda_1, \ldots, \Lambda_d}{\Pi}(\boldm{w})	&= \cInt{\partial_1 \Lambda_1(s, w_1) \partial_1 \Lambda_2 (s, w_2) \cdots  \partial_1 \Lambda_d (s, w_d)}{0}{\infty}{s} \\
															&\leq \cInt{\partial_1 \Lambda_2 (s, w_2) \cdots  \partial_1 \Lambda_d (s, w_d)}{0}{w_1}{s} \\
															&= \Mgprod{\Lambda_2, \ldots, \Lambda_d}{\Pi}{w_1}(w_2, \ldots, w_d) ~.
\end{align*}
An application of \ref{prop:algebraic_prop_perm}. in Proposition~\ref{prop:algebraic_prop}. yields the desired result. 
\end{proof}

\begin{corollary}
Let $C$ be an idempotent $2$-copula, i.e. $C * C = C$.
Then for all $\boldm{w} \in \R_+^2$,
\begin{equation*}
	\TDL{\boldm{w}}{C * C} \geq \TDL{\cdot}{C} * \TDL{\cdot}{C} (\boldm{w}) ~.
\end{equation*}
\end{corollary}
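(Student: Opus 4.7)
The plan is to combine the idempotence hypothesis with the dependence-reduction bound from Theorem~\ref{thm:dependence_reduction}, which was just proved. Since $C*C=C$, one has $\TDL{\boldm{w}}{C*C}=\TDL{\boldm{w}}{C}$, so it suffices to establish the pointwise upper bound
\begin{equation*}
\TDL{\cdot}{C}*\TDL{\cdot}{C}(\boldm{w}) \leq \TDL{\boldm{w}}{C}
\end{equation*}
on $\R_+^2$. This reduces the claim to a direct application of Theorem~\ref{thm:dependence_reduction} in the special case $d=2$ with the inner copula being $\Pi$.

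Concretely, by the very definition of $*$ on $\Md{2}\times\Md{2}$ recorded just before Theorem~\ref{thm:connectionTDF_MP_lipschitz}, we have
\begin{equation*}
\TDL{\cdot}{C}*\TDL{\cdot}{C}(w_1,w_2) = \Mprod{(\TDL{\cdot}{C})^T,\TDL{\cdot}{C}}{\Pi}(w_1,w_2).
\end{equation*}
Since $\Pi\leq\Pi$, the hypothesis of Theorem~\ref{thm:dependence_reduction} is trivially satisfied, and the conclusion (for $d=2$, $k=2$, $m=1$, $\Lambda_1=(\TDL{\cdot}{C})^T$, $\Lambda_2=\TDL{\cdot}{C}$) gives exactly
\begin{equation*}
\Mprod{(\TDL{\cdot}{C})^T,\TDL{\cdot}{C}}{\Pi}(w_1,w_2)\leq \TDL{(w_1,w_2)}{C}.
\end{equation*}
Combining this with the first step closes the argument.

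Conceptually, the one-sided nature of the inequality reflects an asymmetry already noted in the excerpt: on the copula side, $*$ need not decrease dependence (e.g.\ $C^-*C^-=C^+$), so idempotence $C*C=C$ is a genuine constraint, not a triviality. On the tail dependence side, however, Theorem~\ref{thm:dependence_reduction} guarantees that $*$ is strictly dependence-reducing whenever the inner copula is bounded above by $\Pi$, so $\TDL{\cdot}{C}*\TDL{\cdot}{C}$ can only be smaller than $\TDL{\cdot}{C}$ itself. There is no real obstacle here; the only point that needs to be recognized is that the transposition appearing in the definition of $*$ on $\Md{2}$ is harmless, because Theorem~\ref{thm:dependence_reduction} makes no symmetry assumption on the tail dependence functions it bounds.
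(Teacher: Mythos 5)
Your argument is correct and is essentially the paper's own proof: the paper likewise combines the idempotence identity $\TDL{\boldm{w}}{C*C}=\TDL{\boldm{w}}{C}$ with the dependence-reduction bound of Theorem~\ref{thm:dependence_reduction} applied to $\TDL{\cdot}{C}*\TDL{\cdot}{C}$. You merely spell out the index choice and the harmless transposition in $\Lambda_1*\Lambda_2=\Mprod{\Lambda_1^T,\Lambda_2}{\Pi}$, which the paper leaves implicit.
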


\begin{proof}
Theorem~\ref{thm:dependence_reduction} in combination with $C * C = C$ immediately yields
\begin{equation*}
	\TDL{\boldm{w}}{C * C} = \TDL{\boldm{w}}{C} \geq \TDL{\cdot}{C} * \TDL{\cdot}{C} (\boldm{w}) ~. \qedhere
\end{equation*}
\end{proof}

Theorem~\ref{thm:dependence_reduction} can be strengthened for bivariate tail dependence functions at zero and at one. 
Due to the concavity of tail dependence functions and Remark~\ref{remark:bivariate_homogeneity}, an application of Theorem~\ref{thm:dependence_reduction} yields
\begin{equation*}
	(\TDLph{\Lambda_1 * \Lambda_2})'(0)	= \lim\limits_{s \searrow 0} \frac{(\TDLph{\Lambda_1 * \Lambda_2)}(s)}{s}
								\leq \lim\limits_{s \searrow 0} \frac{\min\braces{\TDLph{\Lambda_1}(s), \TDLph{\Lambda_2}(s)}}{s}
								= \min \braces{\TDLph{\Lambda_1}'(0) , \TDLph{\Lambda_2}'(0)} ~,
\end{equation*}
despite of Figure~\ref{fig:reductionProperty} suggesting a much stronger result.

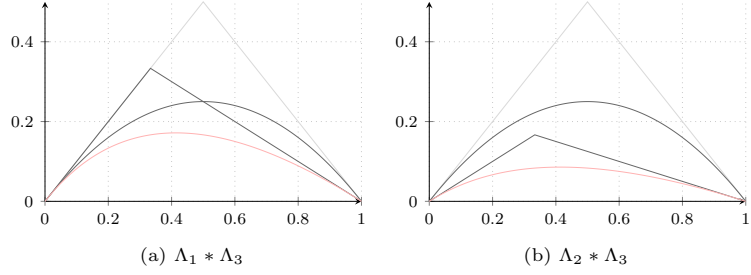
\begin{figure}
	\centering
	\begin{subfigure}[b]{0.32\textwidth}
          \centering
          \resizebox{\linewidth}{!}{\begin{tikzpicture}[ declare function={ 
    			tdf(\t) 	= min(\t, 1-\t); 
				tdf1(\t) 	= min(0.5*\t, 1-\t);
				tdf2(\t) 	= min(\t, 0.25*(1-\t));
				tdf3(\x,\y)	= \x*\y/(\x + \y);
				tdf13(\t)	= tdf3(\t, 0.5*(1-\t));
			}]
        \begin{axis}[mlineplot, width=7cm,height=5cm]
            \addplot[name path=f, domain=0:1, samples=101, color=gray!30]{tdf(\x)};
            \addplot[name path=f, domain=0:1, samples=151, color=black!60]{tdf1(1-\x)};
            \addplot[name path=f, domain=0:1, samples=151, color=black!60]{tdf3(\x, 1-\x)};
            \addplot[name path=f, domain=0:1, samples=151, color=red!30]{tdf13(\x)};
        \end{axis}
\end{tikzpicture}}
          \caption{$\Lambda_1 * \Lambda_3$}
          \label{fig:reductionProdA}
    \end{subfigure}
	\begin{subfigure}[b]{0.32\textwidth}
          \centering
          \resizebox{\linewidth}{!}{\begin{tikzpicture}[ declare function={ 
    			tdf(\t) 	= min(\t, 1-\t); 
				tdf1(\t) 	= 0.5*min(0.5*\t, 1-\t);
				tdf2(\t) 	= min(\t, 0.25*(1-\t));
				tdf3(\x,\y)	= \x*\y/(\x + \y);
				tdf13(\t)	= 0.5*tdf3(\t, 0.5*(1-\t));
			}]
        \begin{axis}[mlineplot, width=7cm,height=5cm]
            \addplot[name path=f, domain=0:1, samples=101, color=gray!30]{tdf(\x)};
            \addplot[name path=f, domain=0:1, samples=151, color=black!60]{tdf1(1-\x)};
            \addplot[name path=f, domain=0:1, samples=151, color=black!60]{tdf3(\x, 1-\x)};
            \addplot[name path=f, domain=0:1, samples=151, color=red!30]{tdf13(\x)};
        \end{axis}
\end{tikzpicture}}
          \caption{$\Lambda_2 * \Lambda_3$}
          \label{fig:reductionProdB}
     \end{subfigure}
     \caption{Plots of the products $\Lambda_1 * \Lambda_2 (t, 1-t)$ and $\Lambda_2 * \Lambda_3 (t, 1-t)$ (red line) following Remark~\ref{remark:bivariate_homogeneity}. 
     The tail dependence functions $\Lambda_1(t, 1-t) = \min \rbraces{\frac{t}{2}, 1-t}, \Lambda_2(t, 1-t) =  \frac{1}{2} \min \rbraces{\frac{t}{2}, 1-t}$ and $\Lambda_3(t, 1-t) =t(1-t)$ are depicted in black, the upper bound $\Lambda^+$ in grey.}
     \label{fig:reductionProperty}
\end{figure}

\begin{proposition} \label{prop:derivative_factorization}
For $\Lambda_1, \Lambda_2 \in \Md{2}$, it holds 
\begin{equation*}
	(\TDLph{\Lambda_1 * \Lambda_2})'(0) = \TDLph{\Lambda}_1'(0) \cdot \TDLph{\Lambda}_2'(0) \text{ and } (\TDLph{\Lambda_1 * \Lambda_2})'(1) = - \TDLph{\Lambda}_1'(1) \cdot \TDLph{\Lambda}_2'(1) ~.
\end{equation*}
Furthermore, for any negatively quadrant dependent $C \in \Cd{2}$, i.e. $C \leq \Pi$, 
\begin{equation*}
	-\TDLph{\Lambda}_1'(1) \cdot \TDLph{\Lambda}_2'(1) \leq (\TDLph{\Lambda_1 *_C \Lambda_2})'(t) \leq  \TDLph{\Lambda}_1'(0) \cdot \TDLph{\Lambda}_2'(0) 
\end{equation*}
for all $t \in [0, 1]$. 
\end{proposition}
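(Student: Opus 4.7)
The plan is to first derive explicit integral representations for the boundary derivatives $\TDLph{\Lambda}'(0)$ and $\TDLph{\Lambda}'(1)$ of an arbitrary bivariate tail dependence function, apply them to $\Lambda_1 * \Lambda_2$ via dominated convergence, and finally extend to general $C \leq \Pi$ by combining concavity with Theorem~\ref{thm:dependence_reduction}. For the first step, positive homogeneity gives $\TDLph{\Lambda}(s) = s \, \Lambda(1, (1-s)/s)$; since $\TDLph{\Lambda}(0) = 0$, the right derivative $\TDLph{\Lambda}'(0)$ equals $\lim_{s \searrow 0} \TDLph{\Lambda}(s)/s = \lim_{t \to \infty} \Lambda(1, t)$, which admits the integral representation $\int_0^\infty \partial_2 \Lambda(1, s)\, ds$ because $\Lambda(1, 0) = 0$ and $\Lambda(1, t) \leq 1$. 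By the symmetric computation at $s = 1$, $-\TDLph{\Lambda}'(1) = \lim_{t \to \infty} \Lambda(t, 1) = \int_0^\infty \partial_1 \Lambda(s, 1)\, ds$. In particular, $\partial_2 \Lambda(1, \cdot)$ and $\partial_1 \Lambda(\cdot, 1)$ are integrable on $\R_+$.

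Substituting the definition of $*$ into this template and applying it to $\Lambda_1 * \Lambda_2$ gives
\begin{equation*}
	\TDLph{\Lambda_1 * \Lambda_2}'(0) = \lim_{t \to \infty} \int_0^\infty \partial_2 \Lambda_1(1, s)\, \partial_1 \Lambda_2(s, t)\, ds.
\end{equation*}
Homogeneity of order zero of the partial derivative rewrites $\partial_1 \Lambda_2(s, t) = \partial_1 \Lambda_2(s/t, 1)$, and as $t \to \infty$ with $s > 0$ fixed this converges to $\lim_{v \searrow 0} \partial_1 \Lambda_2(v, 1)$; by the monotonicity of $v \mapsto \partial_1 \Lambda_2(v, 1)$ from Proposition~\ref{prop:tail_dep_func}, the limit exists and equals the right derivative of the concave function $v \mapsto \Lambda_2(v, 1)$ at zero, which by step one is $\TDLph{\Lambda}_2'(0)$. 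The integrand is dominated by $\partial_2 \Lambda_1(1, s)$, which is integrable with integral $\TDLph{\Lambda}_1'(0) < \infty$, so dominated convergence yields $\TDLph{\Lambda_1 * \Lambda_2}'(0) = \TDLph{\Lambda}_1'(0) \cdot \TDLph{\Lambda}_2'(0)$. The identity at $s = 1$ is obtained by the completely analogous computation, letting $w \to \infty$ in $(\Lambda_1 * \Lambda_2)(w, 1)$ and using the companion representation $-\TDLph{\Lambda}'(1) = \int_0^\infty \partial_1 \Lambda(s, 1)\, ds$; the two negative signs combine to produce the sign in $\TDLph{\Lambda_1 * \Lambda_2}'(1) = -\TDLph{\Lambda}_1'(1) \TDLph{\Lambda}_2'(1)$.

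For the uniform bounds in the case $C \leq \Pi$, concavity of $\TDLph{\Lambda_1 *_C \Lambda_2}$ (Proposition~\ref{prop:tail_dep_func}) forces its a.e.\ defined derivative to be decreasing, so $\TDLph{\Lambda_1 *_C \Lambda_2}'(1) \leq \TDLph{\Lambda_1 *_C \Lambda_2}'(t) \leq \TDLph{\Lambda_1 *_C \Lambda_2}'(0)$. Theorem~\ref{thm:dependence_reduction} gives $\Lambda_1 *_C \Lambda_2 \leq \Lambda_1 * \Lambda_2$ pointwise; since both sides vanish at $s = 0$ and $s = 1$, an elementary difference-quotient comparison (dividing by $s$ and by $s-1$ and passing to the one-sided limits) yields $\TDLph{\Lambda_1 *_C \Lambda_2}'(0) \leq \TDLph{\Lambda_1 * \Lambda_2}'(0)$ and $\TDLph{\Lambda_1 *_C \Lambda_2}'(1) \geq \TDLph{\Lambda_1 * \Lambda_2}'(1)$. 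Chaining these with the concavity sandwich and substituting the explicit values from step two closes the estimate. The main technical hurdle is the dominated convergence in step two: identifying the pointwise limit of $\partial_1 \Lambda_2(s, \cdot)$ rests on combining homogeneity with the monotonicity/concavity statements of Proposition~\ref{prop:tail_dep_func}, while the integrable dominator is precisely the integral representation from step one applied to $\Lambda_1$.
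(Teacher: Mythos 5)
Your argument is correct, but it reaches the two boundary identities by a genuinely different route than the paper. The paper differentiates at the boundary point itself: it uses the homogeneity formula $\partial_1 \Lambda(x,y) = \widetilde{\Lambda}(x/(x+y)) + \tfrac{y}{x+y}\widetilde{\Lambda}'(x/(x+y))$ to get $\partial_1\Lambda(0,y)=\widetilde{\Lambda}'(0)$, then computes $\partial_1(\Lambda_1*\Lambda_2)(0,y)$ by a Stieltjes integration by parts (justified as in Lemma~3.1 of Darsow et al.), which makes the factor $\widetilde{\Lambda}_1'(0)$ pop out as a constant in front of the total variation $\partial_1\Lambda_2(\infty,y)-\partial_1\Lambda_2(0,y)=-\widetilde{\Lambda}_2'(0)$; the value at $1$ is then obtained from skew-symmetry, $(\widetilde{\Lambda_1*\Lambda_2})'(1)=(\widetilde{\Lambda_2^T*\Lambda_1^T})'(0)$. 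You instead convert the boundary derivative into a limit at infinity via $\widetilde{\Lambda}'(0)=\lim_{t\to\infty}\Lambda(1,t)$ — which is exactly Lemma~\ref{lemma:tdf_strict}, stated in the paper only \emph{after} this proposition — and then pass to the limit inside $\int_0^\infty \partial_2\Lambda_1(1,s)\,\partial_1\Lambda_2(s,t)\,ds$ by dominated convergence, using homogeneity of order zero and monotonicity of $\partial_1\Lambda_2(\cdot,1)$ to identify the pointwise limit. This buys you a proof that avoids both the Stieltjes product rule and the interchange of $\partial_x$ with the integral, at the cost of the (harmless, given the paper's standing convention on a.e.\ defined derivatives) bookkeeping of where $\partial_1\Lambda_2(\cdot,1)$ exists along the ray $s/t$. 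For the final sandwich your treatment is actually more complete than the paper's one-line appeal to concavity: you make explicit the missing link $(\widetilde{\Lambda_1*_C\Lambda_2})'(0)\le(\widetilde{\Lambda_1*\Lambda_2})'(0)$ and its counterpart at $1$, obtained from $\Lambda_1*_C\Lambda_2\le\Lambda_1*\Lambda_2$ (Theorem~\ref{thm:dependence_reduction}, or Proposition~\ref{prop:algebraic_prop}(5)) together with the vanishing of both functions at the endpoints and a one-sided difference-quotient comparison. Both routes are sound; yours is the more self-contained of the two.
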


\begin{proof}
The positive homogeneity of a tail dependence function $\Lambda$ implies
\begin{equation*}
	\Lambda(x, y)	= (x + y) \TDLphFo{\Lambda} \rbraces{\frac{x}{x+y}} ~,
\end{equation*} 
for all $(x + y) > 0$, which in turn yields 
\begin{equation*}
	\partial_1 \Lambda(x, y) = \TDLphFo{\Lambda} \rbraces{\frac{x}{x+y}} + \frac{y}{x+y} \TDLphFo{\Lambda}' \rbraces{\frac{x}{x+y}} ~.
\end{equation*}
Thus, for any $y > 0$, we obtain
\begin{equation*}
	\partial_1 \Lambda(0, y) = \TDLphFo{\Lambda}' \rbraces{0} ~.
\end{equation*}
An application of the product rule for the Stieltjes integral yields
\begin{align*}
	(\TDLph{\Lambda_1 * \Lambda_2})'(0)	&= \partial_1 \rbraces{\Lambda_1 * \Lambda_2} (0, y) 
										= \left. \partial_x \cInt{\partial_2 \Lambda_1(x, t) \partial_1 \Lambda_2(t, y)}{0}{\infty}{t} \right|_{x=0} \\
										&= \left.\rInt{\partial_1 \Lambda_2(t, y)}{0}{\infty}{ \partial_1 \Lambda_1(x, \mathrm{d} t)} \right|_{x=0} \\
										&= \left. \partial_1 \Lambda_2(\infty, y) \partial_1 \Lambda_1(x, \infty) - \partial_1 \Lambda_2(0, y) \partial_1 \Lambda_1(x, 0) 
											- \rInt{\partial_1 \Lambda_1(x, t)}{0}{\infty}{\partial_1 \Lambda_2(\mathrm{d} t, y)} \right|_{x=0} \\
										&= - \rInt{\partial_1 \Lambda_1(0, t)}{0}{\infty}{\partial_1 \Lambda_2(\mathrm{d} t, y)} 
										= - \TDLph{\Lambda_1}'(0) \rInt{1}{0}{\infty}{\partial_1 \Lambda_2(\mathrm{d} t, y)} \\
										&= - \TDLph{\Lambda_1}'(0) \cbraces{\partial_1 \Lambda_2(\infty, y) - \partial_1 \Lambda_2(0, y)} 
										= \TDLph{\Lambda_1}'(0) \cdot \TDLph{\Lambda_2}'(0) ~,
\end{align*}
where the third equality can be shown analogously to Lemma 3.1 of \cite{Darsow.1992}. 
The second claim can be derived by observing that $(\TDLph{\Lambda_1 * \Lambda_2})'(1) = (\TDLph{\Lambda_2^T * \Lambda_1^T})'(0)$.
Finally, the last assertion stems from the fact that $\TDLph{\Lambda_1 * \Lambda_2}$ is concave and thus has a monotone derivative.
\end{proof}

The next lemma shows that Proposition~\ref{prop:derivative_factorization} has a surprising influence on the global behaviour of $\TDLn_1 * \TDLn_2$, by connecting the derivative of $\TDLph{\TDLn_1 * \TDLn_2}$ in zero to the behaviour of $(\TDLn_1 * \TDLn_2)$ towards $\infty$.

\begin{lemma} \label{lemma:tdf_strict}
Let $\Lambda$ be a bivariate tail dependence function.
Then the following are equivalent
\begin{enumerate}
	\item $\TDLph{\Lambda}'(0) = a \in [0, 1]$. \label{lemma:tdf_strict_derivative}
	\item $\lim\limits_{y \rightarrow \infty} \Lambda(x, y) = ax$ for all $x \in \R_+$. \label{lemma:tdf_strict_boundary}
\end{enumerate}
\end{lemma}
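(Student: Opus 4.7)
The plan is to leverage the positive homogeneity of $\Lambda$ to reduce the statement about the limit at infinity to a statement about the one-sided derivative of $\widetilde{\Lambda}$ at zero. Specifically, I would reuse the identity
\[
	\Lambda(x, y) = (x + y)\, \widetilde{\Lambda}\!\left(\frac{x}{x+y}\right),
\]
already established in the proof of Proposition~\ref{prop:derivative_factorization}, which holds whenever $x + y > 0$.

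Fix $x > 0$ (the case $x = 0$ is trivial since $\Lambda(0, y) = 0$ for all $y$, using $0 \leq \Lambda \leq \Lambda^+$ and $\Lambda^+(0, y) = 0$). Substitute $s = x/(x+y)$, so that $x + y = x/s$ and $s \searrow 0$ exactly when $y \to \infty$. The identity above then rewrites as
\[
	\lim_{y \to \infty} \Lambda(x, y) \;=\; \lim_{s \searrow 0} \frac{x}{s}\, \widetilde{\Lambda}(s) \;=\; x \lim_{s \searrow 0} \frac{\widetilde{\Lambda}(s) - \widetilde{\Lambda}(0)}{s},
\]
where I used that $\widetilde{\Lambda}(0) = \Lambda(0,1) = 0$. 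Since $\widetilde{\Lambda}$ is concave on $[0,1]$ with $\widetilde{\Lambda}(0)=0$, the right-sided derivative $\widetilde{\Lambda}'(0)$ exists in $[0,\infty]$, and from $\widetilde{\Lambda}(s) \leq \widetilde{\Lambda^+}(s) \leq s$ we also obtain $\widetilde{\Lambda}'(0) \in [0,1]$.

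Both directions now drop out of the single chain of equalities: if \ref{lemma:tdf_strict_derivative} holds with $\widetilde{\Lambda}'(0) = a$, then $\lim_{y \to \infty} \Lambda(x,y) = a x$; conversely, if \ref{lemma:tdf_strict_boundary} holds, then specializing to $x = 1$ forces $\lim_{s \searrow 0} \widetilde{\Lambda}(s)/s = a$, hence $\widetilde{\Lambda}'(0) = a$. The only subtlety is the existence of the one-sided derivative at $0$, and this is immediate from concavity together with the grounding $\widetilde{\Lambda}(0) = 0$; there is no real obstacle otherwise.
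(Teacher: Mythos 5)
Your argument is correct and follows essentially the same route as the paper's proof: exploit positive homogeneity to write $\Lambda(x,y)=(x+y)\,\widetilde{\Lambda}\bigl(x/(x+y)\bigr)$, substitute so that $y\to\infty$ becomes $s\searrow 0$, and identify the limit with $x\,\widetilde{\Lambda}'(0)$, whose existence is guaranteed by concavity. The extra remarks on the case $x=0$ and the bound $\widetilde{\Lambda}'(0)\in[0,1]$ are fine but not substantively different from the paper's argument.
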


\begin{proof}
As tail dependence functions are concave, the right-hand derivative $\TDLph{\Lambda}'(0)$ exists. 
Then, for all $x \in \R_+$, it holds that
\begin{align*}
	\lim\limits_{y \rightarrow \infty} \Lambda(x, y)	&= \lim\limits_{y \rightarrow \infty} (x + y) \Lambda \rbraces{\frac{x}{x + y}, \frac{y}{x+y}} \\
														&= \lim\limits_{y \rightarrow \infty} x \frac{x+y}{x} \TDLph{\Lambda} \rbraces{\frac{x}{x + y}} 
														= \lim\limits_{t \rightarrow 0} x \frac{\TDLph{\Lambda}(t)}{t}
														= x \TDLph{\Lambda}'(0) ~. \qedhere
\end{align*}
\end{proof}

The factorization of $\TDLphFo{\Lambda_1 * \Lambda_2}$ is only valid in $0$ and $1$ and does not generally hold for $s \in (0, 1)$, i.e. $(\TDLph{\Lambda_1 * \Lambda_2})'(s) \neq \TDLph{\Lambda}_1'(s) \cdot \TDLph{\Lambda}_2'(s)$, see for example Figure~\ref{fig:reductionProperty}.
Nevertheless, a general smoothing property concerning the Markov product can be derived, which is reminiscent of \cite{Trutschnig.2013b}.  

\begin{theorem}
Suppose $\Lambda_1, \Lambda_2 \in \Md{2}$. 
Then $\TDLphFo{\Lambda_1 * \Lambda_2}$ is differentiable if $\TDLphFo{\Lambda_1}$ or $\TDLphFo{\Lambda_2}$ is differentiable. 
\end{theorem}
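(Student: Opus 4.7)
The plan is to reduce the two-variable statement to a one-variable differentiability question via positive homogeneity, and then to establish that a certain one-variable profile is of class $C^1$ using a Fubini--Stieltjes rewriting followed by differentiation under the integral. Positive homogeneity of $\Lambda_1 * \Lambda_2$ gives $(\Lambda_1 * \Lambda_2)(w_1, w_2) = w_1\, F(w_2/w_1)$ with
\[
F(r) := (\Lambda_1 * \Lambda_2)(1, r),
\]
so that $\TDLph{\Lambda_1 * \Lambda_2}(s) = s\, F((1-s)/s)$ and differentiability on $(0, 1)$ follows from $F \in C^1((0, \infty))$. Using the skew-symmetry $(\Lambda_1 * \Lambda_2)^T = \Lambda_2^T * \Lambda_1^T$ together with $\TDLph{\Lambda^T}(s) = \TDLph{\Lambda}(1-s)$, we may assume without loss of generality that $\TDLph{\Lambda_1}$ is the differentiable factor.

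Write $\psi_1(t) := \partial_2 \Lambda_1(1, t)$ and $\xi_2(v) := \partial_1 \Lambda_2(1, v)$. By Proposition~\ref{prop:tail_dep_func}, $\psi_1$ is decreasing with $\int_0^x \psi_1 = \Lambda_1(1, x)$, and $\xi_2$ is increasing with $\xi_2(0) = 0$ and total mass $\xi_2(\infty) = \TDLph{\Lambda_2}'(0) \leq 1$ by Lemma~\ref{lemma:tdf_strict}; in particular, $d\xi_2$ is a finite Borel measure on $[0, \infty)$. Using positive homogeneity of order zero of the partial derivatives and applying Fubini to the identity $\xi_2(r/t) = \int \indFunc{\{v \leq r/t\}}\, d\xi_2(v)$, one obtains the compact representation
\[
F(r) = \int_0^\infty \psi_1(t)\, \xi_2(r/t)\, dt = \int_0^\infty \Lambda_1(1, r/v)\, d\xi_2(v).
\]

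Because $\TDLph{\Lambda_1}$ is concave and differentiable on $(0, 1)$, its derivative is continuous, which translates to continuity of $\psi_1$ on $(0, \infty)$ and hence $\Lambda_1(1, \cdot) \in C^1((0, \infty))$. Differentiating $F$ under the integral sign would then give
\[
F'(r) = \int_0^\infty \frac{\psi_1(r/v)}{v}\, d\xi_2(v),
\]
and justifying this passage is the main obstacle: the naive Lipschitz bound $\psi_1(r/v)/v \leq 1/v$ need not be $d\xi_2$-integrable. The decisive estimate comes from the concavity of $\Lambda_1(1, \cdot)$ together with $\Lambda_1(1, 0) = 0$, which gives $\psi_1(x) \leq \Lambda_1(1, x)/x$ for all $x > 0$; combined with Lemma~\ref{lemma:tdf_strict}, which yields the horizontal asymptote $\Lambda_1(1, x) \leq a := \TDLph{\Lambda_1}'(0)$, this produces the uniform bound $\psi_1(r/v)/v \leq a/r$ for every $v > 0$, a constant integrable against the finite measure $d\xi_2$. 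The mean value theorem, applied with $|h| < r/2$, provides a bound of the same form on the difference quotients $h^{-1}[\Lambda_1(1, (r+h)/v) - \Lambda_1(1, r/v)]$. Dominated convergence then yields both the existence of $F'(r)$ and, via a second application using the continuity of $\psi_1$, the continuity of $F'$; hence $F \in C^1((0, \infty))$, which completes the proof.
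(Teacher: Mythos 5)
Your proof is correct, and it takes a genuinely different route from the one in the paper. The paper differentiates along the simplex directly, writing $(\TDLphFo{\Lambda_1 * \Lambda_2})'(s) = \partial_1 (\Lambda_1 * \Lambda_2)(s,1-s) - \partial_2 (\Lambda_1 * \Lambda_2)(s,1-s)$ and converting each term, via the interchange argument of Lemma~3.1 in \cite{Darsow.1992}, into a Stieltjes integral $\int_0^\infty \partial_1 \Lambda_2(t,1-s)\,\partial_1\Lambda_1(s,\mathrm{d}t)$ in which the \emph{differentiable} factor supplies the integrating measure; the hypothesis enters only through the fact that $\partial_1\Lambda_1(s,t)$ then exists for every $s$, and the analytic justification of the interchange is delegated to the cited lemma. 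You instead reduce to the one-variable profile $F(r)=(\Lambda_1*\Lambda_2)(1,r)$ via homogeneity and Tonelli, arriving at the mixture representation $F(r)=\int \Lambda_1(1,r/v)\,\mathrm{d}\xi_2(v)$, in which the roles are swapped: the \emph{non-differentiable} factor supplies a finite Stieltjes measure and the differentiable factor appears undifferentiated as the integrand, so that only a differentiation under the integral sign remains. The decisive ingredient is your estimate $\partial_2\Lambda_1(1,x)\le \Lambda_1(1,x)/x \le \TDLphFo{\Lambda_1}'(0)/x$, which converts the useless pointwise bound $1/v$ into the constant $\TDLphFo{\Lambda_1}'(0)/r$ and lets dominated convergence close the argument; this is precisely the step the paper leaves implicit. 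Your version is longer but self-contained, makes the derivative--integral interchange fully explicit, and yields $F\in C^1$ directly (which, for a concave profile, is of course equivalent to differentiability). The remaining small points — the identity $\xi_2(r/t)=\int \indFunc{\{v\le r/t\}}\,\mathrm{d}\xi_2(v)$ failing only at the countably many discontinuities of $\xi_2$, hence on a Lebesgue-null set of $t$; the possible atom of $\mathrm{d}\xi_2$ at $v=0$ contributing a term constant in $r$; and one-sided differentiability at $s\in\{0,1\}$, which holds automatically by concavity and Lipschitz continuity — are all harmless and, in the last case, equally glossed over by the paper.
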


\begin{proof}
First, the derivative of $\TDLphFo{\Lambda_1 * \Lambda_2}$ can be rewritten as
\begin{equation*}
	(\TDLphFo{\Lambda_1 * \Lambda_2})'(s)	= \partial_1 \rbraces{\Lambda_1 * \Lambda_2}(s, 1-s) - \partial_2 \rbraces{\Lambda_1 * \Lambda_2}(s, 1-s) ~.
\end{equation*}
We will treat both terms on the right-hand side separately.
First, 
\begin{align*}
	 \partial_1 \rbraces{\Lambda_1 * \Lambda_2}(s, 1-s)	&=  \partial_1 \cInt{\partial_2 \Lambda_1(s, t) \partial_1 \Lambda_2(t, 1-s)}{0}{\infty}{t}	\\
	 													&= \rInt{\partial_1 \Lambda_2(t, 1-s)}{0}{\infty}{\partial_1 \Lambda_1(s, \mathrm{d}t) } ~,
\end{align*}
where w.l.o.g. $\partial_1 \Lambda_1(s, t)$ exists for all $s \in [0, 1]$ and is increasing in $t$, otherwise switch the roles of $\Lambda_1$ and $\Lambda_2$ due to symmetry. 
The second equality can again be shown identically to Lemma 3.1 of \cite{Darsow.1992}.
Analogously, 
\begin{align*}
	 \partial_2 \rbraces{\Lambda_1 * \Lambda_2}(s, 1-s)	&= \rInt{\partial_1 \Lambda_2(t, 1-s)}{0}{\infty}{\partial_1 \Lambda_1(s, \mathrm{d}t)} \\
	 													&= - \rInt{\partial_2 \Lambda_2(t, 1-s)}{0}{\infty}{\partial_2 \Lambda_1(s, \mathrm{d}t)} ~. \qedhere
\end{align*}
\end{proof}

While the inverses with respect to the Markov product for $2$-copulas can be used to analyse complete dependence and extremal points of $\Cd{2}$, the reduction property impedes an analogy for tail dependence functions.   

\begin{theorem}
Suppose $\Lambda \in \Md{2}$.
\begin{enumerate}
	\item If $\Lambda$ is left-invertible, i.e. there exists a bivariate tail dependence function $\xi$ such that  $\xi * \Lambda (\boldm{w}) = \TDL{\boldm{w}}{C^+}$, then $\Lambda(\boldm{w}) = \TDL{\boldm{w}}{C^+}$.
	\item If $\partial_1 \Lambda(w_1, w_2) \in \braces{0, 1}$ for almost all $w_2 \in \R_+$, then  $\Lambda(w_1, w_2) = \TDL{w_1, \alpha w_2}{C^+}$ for some $\alpha \in [0, 1]$.
\end{enumerate}
\end{theorem}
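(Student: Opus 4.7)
The plan is to use Theorem~\ref{thm:dependence_reduction} directly for part 1, and to exploit the interplay between concavity, positive homogeneity, and the $\{0,1\}$-valued hypothesis for part 2. For part 1, I would observe that $\Pi$ is (trivially) negatively quadrant dependent, so Theorem~\ref{thm:dependence_reduction} applied with $d = 2$ and $C = \Pi$ gives $\xi * \Lambda \leq \Lambda$ pointwise. Combined with the hypothesis $\xi * \Lambda = \TDLn^+$ and the universal upper bound $\Lambda \leq \TDLn^+$ from Proposition~\ref{prop:tail_dep_func}\ref{prop:tail_dep_func_bounded}., this sandwich forces $\Lambda = \TDLn^+$ everywhere. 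This is essentially a one-line deduction from the reduction already established.

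For part 2, the plan is structural. Fix $w_2 \in \R_+$. By Proposition~\ref{prop:tail_dep_func} the map $w_1 \mapsto \partial_1 \Lambda(w_1, w_2)$ is decreasing in $w_1$, and by hypothesis it takes values in $\{0, 1\}$ almost everywhere. A decreasing, a.e.\ $\{0,1\}$-valued function is necessarily of the form $\indFunc{[0, t^*(w_2))}$ for some unique threshold $t^*(w_2) \in [0, \infty]$. Integrating in $w_1$ and using $\Lambda(0, w_2) = 0$ then yields
\begin{equation*}
  \Lambda(w_1, w_2) = \min\{w_1, t^*(w_2)\} = \TDL{(w_1, t^*(w_2))}{C^+}.
\end{equation*}

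It remains to identify $t^*$. Positive homogeneity of $\Lambda$ (Proposition~\ref{prop:tail_dep_func}\ref{prop:tail_dep_func_homogeneous}.) applied to the formula above gives $\min\{sw_1, t^*(sw_2)\} = \min\{sw_1, s\, t^*(w_2)\}$ for all $s > 0$ and all $w_1 \in \R_+$; taking $w_1$ arbitrarily large pins down $t^*(sw_2) = s\, t^*(w_2)$, so $t^*(w_2) = \alpha w_2$ with $\alpha := t^*(1)$. The upper bound $\Lambda \leq \TDLn^+$ then forces $\min\{w_1, \alpha w_2\} \leq \min\{w_1, w_2\}$, hence $\alpha \leq 1$; nonnegativity of $\partial_1 \Lambda$ gives $\alpha \geq 0$. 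The only nontrivial step is the threshold representation at the beginning of part 2: one needs to justify, compatibly with the a.e.\ hypothesis, that a monotone $\{0,1\}$-valued function is truly an indicator of an initial interval. Once that is handled, everything else is routine bookkeeping.
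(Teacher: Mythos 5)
Your proposal is correct and takes essentially the same route as the paper: part 1 is the identical sandwich $\Lambda^+ = \xi * \Lambda \leq \Lambda \leq \Lambda^+$ obtained from Theorem~\ref{thm:dependence_reduction} with $C = \Pi$, and part 2 is the same threshold-plus-homogeneity argument. The only (cosmetic) difference is that the paper applies order-zero homogeneity directly to $\partial_1 \Lambda$ to see that the threshold coefficient is constant, whereas you integrate first and use order-one homogeneity of $\Lambda$ to pin down $t^*(w_2) = \alpha w_2$, deducing $\alpha \in [0,1]$ afterwards from $\Lambda \leq \Lambda^+$.
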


\begin{proof}
\begin{enumerate}
	\item If $\Lambda$ is left-invertible with left-inverse $\xi$, then 
\begin{equation*}
	\TDL{\boldm{w}}{C^+} = \xi * \Lambda(\boldm{w}) \leq \Lambda(\boldm{w}) \leq \TDL{\boldm{w}}{C^+} ~.
\end{equation*}
	\item Assuming $\Lambda$ is a tail dependence function with $\partial_1 \Lambda(w_1, w_2) \in \braces{0, 1}$ for almost all $w_2 \in \R_+$, then there exists a function $\alpha: [0, \infty) \rightarrow [0, 1]$ such that
	\begin{equation*}
		\partial_1 \Lambda(w_1, w_2) = \indFunc{[0, \alpha(w_2) w_2)]}(w_1) ~. 
	\end{equation*}
	The positive homogeneity of $\Lambda$ implies that $\partial_1 \Lambda$ is positive homogeneous of order $0$, i.e. constant along rays. 
	Thus, for all $s > 0$ this leads to
	\begin{align*}
		\indFunc{[0, \alpha(sw_2)w_2]}(w_1) &= \indFunc{[0, \alpha(sw_2)sw_2]}(sw_1)  
											= \partial_1 \Lambda(sw_1, sw_2) \\
											&= \partial_1 \Lambda(w_1, w_2) 
											= \indFunc{[0, \alpha(w_2)w_2]}(w_1) ~.
	\end{align*}
	Consequently, $\alpha(sw_2) = \alpha(w_2) = \alpha$. \qedhere
\end{enumerate}
\end{proof}

Lastly, we derive a monotonicity property of the Markov product with respect to the pointwise order of tail dependence functions. 

\begin{corollary} \label{cor:markov_product_monotonicity}
For $\Lambda_1, \Lambda_2 \in \Md{2}$, the following are equivalent: 
\begin{enumerate}
	\item $\Lambda_1(\boldm{w}) \leq \Lambda_2(\boldm{w}) $ for all $\boldm{w} \in \R_+^2$. \label{cor:markov_product_monotonicity_order}
	\item $(\Lambda_1 * \Lambda)(\boldm{w}) \leq (\Lambda_2 * \Lambda)(\boldm{w})$ for all $w \in \R_+^2$ and $\Lambda \in \Md{2}$. \label{cor:markov_product_monotonicity_product}
\end{enumerate}
\end{corollary}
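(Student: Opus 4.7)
The plan is to verify the two implications separately, with \ref{cor:markov_product_monotonicity_product} $\Rightarrow$ \ref{cor:markov_product_monotonicity_order} being essentially a tautology and \ref{cor:markov_product_monotonicity_order} $\Rightarrow$ \ref{cor:markov_product_monotonicity_product} reducing to a single application of Hardy's Lemma in the same spirit as the proof of Theorem~\ref{thm:dependence_reduction}.

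For the reverse direction \ref{cor:markov_product_monotonicity_product} $\Rightarrow$ \ref{cor:markov_product_monotonicity_order}, I would specialise the assumed inequality to $\Lambda = \TDLn^+$. Since $\TDLn^+(w_1,w_2) = \min\{w_1,w_2\}$ is symmetric, we have $(\TDLn^+)^T = \TDLn^+$, and by the unit-element property of $\TDLn^+$ recorded in Proposition~\ref{prop:algebraic_prop} (and the remark that follows it) this gives $\Lambda_i * \TDLn^+ = \Lambda_i$ for $i=1,2$. Condition \ref{cor:markov_product_monotonicity_product} then collapses immediately to $\Lambda_1(\boldm{w}) \leq \Lambda_2(\boldm{w})$.

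For the forward direction \ref{cor:markov_product_monotonicity_order} $\Rightarrow$ \ref{cor:markov_product_monotonicity_product}, I would fix $\boldm{w} = (w_1,w_2) \in \R_+^2$ and $\Lambda \in \Md{2}$, and start from the explicit expression
\begin{equation*}
  (\Lambda_i * \Lambda)(w_1,w_2) = \cInt{\partial_2 \Lambda_i(w_1,t)\,\partial_1 \Lambda(t,w_2)}{0}{\infty}{t}.
\end{equation*}
By Proposition~\ref{prop:tail_dep_func} the map $t \mapsto \partial_1 \Lambda(t,w_2)$ is nonnegative and decreasing on $\R_+$, so it is admissible as a weight in Hardy's Lemma. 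On the other hand, since each $\Lambda_i$ is Lipschitz with $\Lambda_i(w_1,0) = 0$, we have $\cInt{\partial_2 \Lambda_i(w_1,t)}{0}{x}{t} = \Lambda_i(w_1,x)$ for every $x \geq 0$. Hypothesis~\ref{cor:markov_product_monotonicity_order} therefore yields
\begin{equation*}
  \cInt{\partial_2 \Lambda_1(w_1,t)}{0}{x}{t} \;\leq\; \cInt{\partial_2 \Lambda_2(w_1,t)}{0}{x}{t} \qquad \text{for all } x \geq 0,
\end{equation*}
which is precisely the hypothesis of Hardy's Lemma (as cited in the proof of Theorem~\ref{thm:dependence_reduction}). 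Applying it with the decreasing weight $\partial_1 \Lambda(\cdot,w_2)$ delivers $(\Lambda_1 * \Lambda)(\boldm{w}) \leq (\Lambda_2 * \Lambda)(\boldm{w})$, which is \ref{cor:markov_product_monotonicity_product}.

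I do not foresee a genuine obstacle here: the whole content is the observation that the Markov product integrates against a decreasing weight in its second variable, so that pointwise order of antiderivatives (which is exactly the hypothesis) passes to order of weighted integrals via Hardy. The only detail worth double-checking is that the antiderivative identity $\int_0^x \partial_2 \Lambda_i(w_1,t)\,dt = \Lambda_i(w_1,x)$ is legitimate despite $\partial_2 \Lambda_i$ being defined only almost everywhere, which follows from the Lipschitz property listed in Proposition~\ref{prop:tail_dep_func} (a.).
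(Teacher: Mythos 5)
Your proposal is correct and matches the paper's own argument: the reverse implication is handled by choosing $\Lambda = \TDLn^+$, and the forward implication rewrites the hypothesis as an ordering of the antiderivatives $\cInt{\partial_2 \Lambda_i(w_1,t)}{0}{x}{t} = \Lambda_i(w_1,x)$ and integrates against the nonnegative decreasing weight $t \mapsto \partial_1 \Lambda(t,w_2)$. The result you call Hardy's Lemma is exactly the Proposition~2.3.6 of \cite{Bennett.1988} invoked in the paper, so the two proofs coincide.
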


\begin{proof}
The implication \ref{cor:markov_product_monotonicity_product}. to \ref{cor:markov_product_monotonicity_order}. follows immediately from the choice $\Lambda = \Lambda^+$. 
Conversely, assuming $\Lambda_1(\boldm{w}) \leq \Lambda_2(\boldm{w})$ for all $\boldm{w} \in \R_+^2$, we have
\begin{equation*}
	\cInt{\partial_2 \Lambda_1(w_1, t)}{0}{w_2}{t}	= \Lambda_1(\boldm{w}) \leq \Lambda_2(\boldm{w})	= \cInt{\partial_2 \Lambda_2(w_1, t)}{0}{w_2}{t} ~.
\end{equation*}
Since $\partial_1 \Lambda(\cdot, w_2)$ is non-negative and decreasing for any tail dependence function $\Lambda \in \Md{2}$, Proposition 2.3.6 in \cite{Bennett.1988} yields
\begin{equation*}
	(\Lambda_1 * \Lambda)(\boldm{w})		= \cInt{\partial_2 \Lambda_1(w_1, t) \partial_1 \Lambda(t, w_2)}{0}{\infty}{t} 
											\leq  \cInt{\partial_2 \Lambda_2(w_1, t) \partial_1 \Lambda(t, w_2) }{0}{\infty}{t}
											= (\Lambda_2 * \Lambda)(\boldm{w}) ~. \qedhere
\end{equation*}
\end{proof}
\section{Iterates of the Markov product} \label{section:iterates}

In the context of $2$-copulas, the concepts of iterates, idempotents, and Cesàro sums of the Markov product are widely investigated, see, for example, \cite{Darsow.2010} or \cite{Trutschnig.2013b}. 
To investigate these concepts in the setting of tail dependence functions, we define the $n$-th iterate of the Markov product for $2$-copulas and tail dependence functions as
\begin{equation*}
	C^{*n} := \underbrace{C * \ldots * C}_{n\text{ times}} \text{ and } \TDLn^{*n} :=  \TDLn * \ldots * \TDLn ~,
\end{equation*}
respectively. 
\cite{Trutschnig.2013} showed the existence of the Cesàro sums
\begin{equation*}
	\widehat{C} := \lim\limits_{n \rightarrow \infty} \frac{1}{n} \sum\limits_{\ell = 1}^n C^{*\ell} 
\end{equation*}
for general $2$-copulas $C$ and treated their limit behaviour using ergodic theory. 
Here, we will study the asymptotic behaviour of $\Lambda^{*n}$ and extend the results to an averaging of the Markov product. 
First, we will develop an understanding using two simple examples. 

\begin{example}
Consider a copula $C$ such that 
\begin{equation*}
	\partial_1 \TDL{\boldm{w}}{C} = \indFunc{[0, \alpha w_2]}(w_1) \text{ with } \alpha \in [0, 1] ~.
\end{equation*}
A simple calculation yields
\begin{equation*}
	\TDL{\cdot}{C}^{* 2} (\boldm{w})	= \cInt{\partial_2 \TDL{(w_1, t)}{C} \indFunc{[0, \alpha w_2]} (t) }{0}{\infty}{t} 
										= \TDL{(w_1, \alpha w_2)}{C}
\end{equation*}
and iteratively
\begin{equation*}
	\TDL{\cdot}{C}^{* n} (\boldm{w})	= \TDL{(w_1, \alpha^{n-1} w_2)}{C} \rightarrow \begin{cases} 0		& \text{ for } \alpha \in [0, 1) \\ \min \braces{w_1, w_2}		&\text{ for } \alpha = 1 \end{cases} ~.
\end{equation*}
Thus, in this example, the limiting behaviour of $\Lambda^{*n}$ is either given by $\TDLprod$ or $\TDLn^+$. 
\end{example}

The next example treats a class of tail dependence functions, which will be utilized to dominate arbitrary tail dependence functions and ultimately characterize idempotents.

\begin{example}
For $0 \leq p \leq \frac{1}{2}$ define the function
\begin{equation} \label{eqn:tdf_dom}
	\TDLph{\TDLn}(s) := 
		\begin{cases} 
			s	&\text{ for } 0 \leq s \leq p \\
			p	&\text{ for } p \leq s \leq 1-p \\
			1-s	&\text{ for } 1-p \leq s \leq 1
		\end{cases} ~.
\end{equation}
Following Remark~\ref{remark:bivariate_homogeneity}, $\Lambda$ can be extended to a tail dependence function on $\R_+^2$. 
A straightforward calculation with $q:= \frac{1-p}{p}$ yields the recurrence equation
\begin{equation*}
	\TDLn^{* (n+1)}(w_1, w_2)	= (1-p) \TDLn^{* n}\rbraces{\frac{1}{q} w_1, w_2}	+ p \TDLn^{* n} \rbraces{q w_1, w_2} ~.
\end{equation*}
It can be solved in two steps.
First, it holds
\begin{equation*}
	\TDLn^{* (n+1)}(w_1, w_2)	= \sum\limits_{\ell = 0}^{n} a_\ell^{n} \TDLn \rbraces{q^{n - 2\ell}w_1, w_2}
\end{equation*}
with $a_\ell^n \in \R_+$ such that
\begin{equation*}
	a_0^0 = 1 \; , \; a_0^{n+1} = p^{n}	\text{ and } a_\ell^{n+1}	= (1-p) a_{\ell - 1}^n + p a_\ell^n \text{ for } 1 \leq \ell \leq n ~.
\end{equation*}
The general solution of multivariate recurrences of this type was derived by \cite{Neuwirth.2001} and \cite{Mansour.2013} and is  given by
\begin{equation*}
	a_\ell^n	= \binom{n}{\ell} (1-p)^\ell p^{n - \ell}	\text{ for } 0 \leq \ell \leq n ~. 
\end{equation*} 
Using the positive homogeneity of $\Lambda$, we arrive at the solution
\begin{equation*}
	\TDLn^{* (n+1)}(w_1, w_2)	= p^{n} \sum\limits_{\ell = 0}^{n} \binom{n}{\ell} \TDLn \rbraces{q^{n-\ell}w_1, q^{\ell} w_2} ~.
\end{equation*}
An example of the behaviour of $\TDLn^{* n}$ is shown in Figure~\ref{fig:iterativeProduct} for different $n$ and $p=\frac{1}{3}$.
We will now derive the asymptotic behaviour of $\Lambda^{*n}$ for $n \rightarrow \infty$. 
Due to the iterated Markov product being symmetric and due to the monotonicity of $*$, it suffices to consider $w_1 = w_2 = \frac{1}{2}$ and uneven $n = 2k+1$. 
It holds
\begin{align*}
	\TDLn^{* (2k+1)}\rbraces{\frac{1}{2}, \frac{1}{2}}	&= \frac{p^{2k}}{2} \sum\limits_{\ell = 0}^{2k} \binom{2k}{\ell} \TDLn \rbraces{q^{2k-\ell}, q^{\ell}} \\
														&= \frac{p^{2k}}{2} \sum\limits_{\ell = 0}^{2k} \binom{2k}{\ell} \rbraces{q^{2k-\ell} + q^{\ell}} \TDLph{\TDLn} \rbraces{\frac{q^{2k-\ell}}{q^{2k-\ell} + q^{\ell}}} \\
														&\leq p^{2k} \sum\limits_{\ell = 0}^{k} \binom{2k}{\ell} q^\ell - p^{2k+1}q^{k} \binom{2k}{k} \\
														&= \sum\limits_{\ell = 0}^{k} \binom{2k}{\ell} (1-p)^\ell p^{2k-\ell} - \binom{2k}{k} p^{k+1}(1-p)^{k} ~,  
\end{align*}
where the inequality is due to the definition of $\Lambda(s)$ and equality holds in case of $p = 1/2$. 
While the second part converges to zero as $n \rightarrow \infty$, the first part is a truncated binomial sum and by the weak law of large numbers, we have
\begin{align*}
\lim\limits_{k \rightarrow \infty} \max\limits_{w_1 + w_2 = 1} \TDLn^{* (2k+1)}\rbraces{w_1, w_2}
 								&= \lim\limits_{k \rightarrow \infty} \TDLn^{* (2k+1)}\rbraces{\frac{1}{2}, \frac{1}{2}} \\	
								&\leq \lim\limits_{k \rightarrow \infty}  \sum\limits_{\ell = 0}^{k} \binom{2k}{\ell} (1-p)^\ell p^{2k-\ell} - \binom{2k}{k} p^{k+1}(1-p)^{k}  \\
								&= \begin{cases}	0	&\text{ for } p < \frac{1}{2}\\ \frac{1}{2}	&\text{ for } p = \frac{1}{2} \end{cases} ~.
\end{align*}
Due to $0 \leq \Lambda^{*(2k+1)}$, the above inequality is in fact an equality. 
\begin{figure}
	\centering
	\begin{subfigure}[b]{0.24\textwidth}
          \centering
          \resizebox{\linewidth}{!}{ \begin{tikzpicture}[ declare function={ 
    			tdf(\t) 	= min(\t, 1-\t); 
				f(\t)		= min(\t, 1/3, 1-\t);
			}]
        \begin{axis}[mlineplot]
            \addplot[domain=0:1, 	samples=101, color=gray!30]{tdf(\x)};
            \addplot[domain=0:1, 	samples=151, color=black!60]{f(\x)};
        \end{axis}
\end{tikzpicture}}
          \caption{$\TDLph{\TDLn}(s)$}
          \label{fig:iterProdA}
    \end{subfigure}
	\begin{subfigure}[b]{0.24\textwidth}
          \centering
          \resizebox{\linewidth}{!}{ \begin{tikzpicture}[ declare function={ 
    			tdf(\t) 	= min(\t, 1-\t); 
				f(\x,\y)	= (2*\x<=\y)?\x:((2*\y<=\x)?\y:(\x+\y)/3);
				h(\x,\y)	= (f(2*\x,\y)+f(\x,2*\y))/3;
				g(\x)		= h(\x,1-\x);
		}]
        \begin{axis}[mlineplot]
            \addplot[domain=0:1, 	samples=101, color=gray!30]{tdf(\x)};
			\addplot[domain=0:1, 	samples=101, color=gray!30]{f(\x,1-\x)};
            \addplot[domain=0:1, 	samples=301, color=black!60]{g(\x)};
        \end{axis}
\end{tikzpicture}}
          \caption{$\TDLph{\TDLn^{* 2}}(s)$}
          \label{fig:iterProdB}
     \end{subfigure}
     \begin{subfigure}[b]{0.24\textwidth}
          \centering
          \resizebox{\linewidth}{!}{ \begin{tikzpicture}[ declare function={ 
    			tdf(\t) 	= min(\t, 1-\t); 
				f(\x,\y)	= (2*\x<=\y)?\x:((2*\y<=\x)?\y:(\x+\y)/3);
				h(\x,\y)	= (f(2*\x,\y)+f(\x,2*\y))/3;
				g(\x,\y)	= (f(4*\x,\y)+2*f(2*\x,2*\y)+1*f(\x,4*\y))/9;	
		}]
        \begin{axis}[mlineplot]
            \addplot[domain=0:1, 	samples=101, color=gray!30]{tdf(\x)};
			\addplot[domain=0:1, 	samples=101, color=gray!30]{h(\x,1-\x)};
            \addplot[domain=0:1, 	samples=301, color=black!60]{g(\x,1-\x)};
        \end{axis}
\end{tikzpicture}}
          \caption{$\TDLph{\TDLn^{* 3}}(s)$}
          \label{fig:iterProdC}
     \end{subfigure}
     \begin{subfigure}[b]{0.24\textwidth}
          \centering
          \resizebox{\linewidth}{!}{ \begin{tikzpicture}[ declare function={ 
    			tdf(\t) 	= min(\t, 1-\t); 
				f(\x,\y)	= (2*\x<=\y)?\x:((2*\y<=\x)?\y:(\x+\y)/3);
				h(\x,\y)	= (f(8*\x,\y)+6*f(2*\x,\y)+6*f(\x,2*\y)+f(\x,8*\y))/27;	
				g(\x,\y)	= (f(4*\x,\y)+2*f(2*\x,2*\y)+1*f(\x,4*\y))/9;	
				k(\x,\y)	= (f(16*\x,\y)+4*f(8*\x,2*\y)+6*f(4*\x,4*\y)+4*f(2*\x,8*\y)+1*f(\x,16*\y))/81;		
		}]
        \begin{axis}[mlineplot]
            \addplot[domain=0:1, 	samples=101, color=gray!30]{tdf(\x)};
			\addplot[domain=0:1, 	samples=101, color=gray!30]{h(\x,1-\x)};
			\addplot[domain=0:1, 	samples=101, color=gray!30]{g(\x,1-\x)};
            \addplot[domain=0:1, 	samples=301, color=black!60]{k(\x,1-\x)};
        \end{axis}
\end{tikzpicture}}
          \caption{$\TDLph{\TDLn^{* 5}}(s)$}
          \label{fig:iterProdD}
     \end{subfigure}
     \caption{Plots of the tail dependence function $\Lambda$ from Equation~\eqref{eqn:tdf_dom} and its iterations $\Lambda^{*n}$ for $n=2, 3$ and $5$ and $p=\frac{1}{3}$.}
     \label{fig:iterativeProduct}
\end{figure}
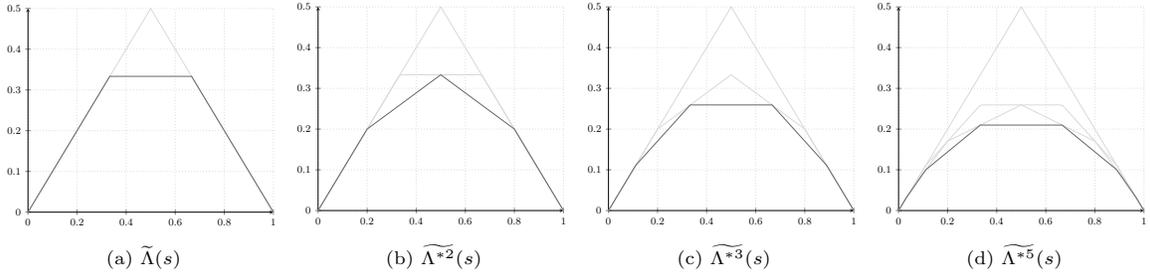
\end{example}

Using the monotonicity property of the Markov product from Corollary~\ref{cor:markov_product_monotonicity} and the fact that the previous examples dominate any tail dependence function, we arrive at the following result. 

\begin{theorem}
Let $\Lambda$ be a bivariate tail dependence function. 
Then 
\begin{equation*}
	\lim\limits_{n \rightarrow \infty} \Lambda^{* n}(\boldm{w})		= \begin{cases} 
																		\TDL{\boldm{w}}{C^+}	&\text{ for } \Lambda = \TDL{\cdot}{C^+} \\
																		\TDL{\boldm{w}}{\Pi}	&\text{ for } \Lambda \neq \TDL{\cdot}{C^+} 
																	  \end{cases} 
\end{equation*}
and the Cesàro sum equals
\begin{align*}
	\TDLn^*(\boldm{w}) 	&:= \lim\limits_{n \rightarrow \infty} \frac{1}{n} \sum\limits_{\ell = 1}^n \Lambda^{*\ell} (\boldm{w})
						= \lim\limits_{n \rightarrow \infty} \Lambda^{* n}(\boldm{w})	~.
\end{align*}
\end{theorem}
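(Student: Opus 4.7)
The plan is to split into two cases and reduce the non-trivial case to the dominating family from the preceding example via the monotonicity in Corollary~\ref{cor:markov_product_monotonicity}.

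If $\Lambda = \TDLn^+$, then $\TDLn^+$ is the unit of $*$, so $\Lambda^{*n} \equiv \TDLn^+$ for every $n$ and the result is immediate. Assume henceforth $\Lambda \neq \TDLn^+$. By Proposition~\ref{prop:tail_dep_func}, $\TDLph{\Lambda}$ is concave on $[0,1]$, satisfies $\TDLph{\Lambda}(0) = \TDLph{\Lambda}(1) = 0$ (forced by $0 \leq \Lambda \leq \TDLn^+$), and is dominated by $\TDLph{\TDLn^+}(s) = \min(s, 1-s)$. Set $p := \sup_{s \in [0,1]} \TDLph{\Lambda}(s) \in [0, 1/2]$. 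I claim $p < 1/2$: if $p = 1/2$, the supremum is attained by upper semicontinuity, necessarily at $s = 1/2$ (since $\TDLph{\TDLn^+}(s) < 1/2$ off the midpoint); the concavity chord inequality combined with $\TDLph{\Lambda}(0) = 0$ and $\TDLph{\Lambda}(1/2) = 1/2$ then forces $\TDLph{\Lambda}(s) \geq s$ on $[0, 1/2]$ and symmetrically on $[1/2, 1]$, giving $\TDLph{\Lambda} = \TDLph{\TDLn^+}$, a contradiction.

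With $p < 1/2$ in hand, a piecewise check yields the pointwise domination $\TDLph{\Lambda} \leq \TDLph{\Lambda_p}$, where $\Lambda_p$ is the function defined by Equation~\eqref{eqn:tdf_dom}: on $[0, p] \cup [1-p, 1]$ the dominator $\TDLph{\Lambda_p}$ coincides with $\TDLph{\TDLn^+}$, and on $[p, 1-p]$ it equals the constant $p$, which bounds $\TDLph{\Lambda}$ by the definition of $p$. Extending $\Lambda_p$ to $\R_+^2$ via positive homogeneity and iterating Corollary~\ref{cor:markov_product_monotonicity}, I obtain $\Lambda^{*n}(\boldm{w}) \leq \Lambda_p^{*n}(\boldm{w})$ for every $n \geq 1$ and every $\boldm{w} \in \R_+^2$.

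The preceding example establishes, via the weak law of large numbers, that $\Lambda_p^{*(2k+1)}(1/2, 1/2) \to 0$ as $k \to \infty$ for $p < 1/2$, and that this value realizes the maximum of $\Lambda_p^{*(2k+1)}$ on the unit simplex. Since Theorem~\ref{thm:dependence_reduction} applied to $C = \Pi$ yields the pointwise monotonicity $\Lambda_p^{*(n+1)} \leq \Lambda_p^{*n}$, the even subsequence is sandwiched between $0$ and the odd one and also converges to $0$ on the simplex; positive homogeneity extends this to all of $\R_+^2$, so $\Lambda^{*n} \to 0 = \TDLprod$ pointwise. The Cesàro statement then follows from the elementary fact that Cesàro averages of a pointwise convergent sequence share the same limit. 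The main obstacle is establishing the strict inequality $p < 1/2$ together with the explicit domination $\TDLph{\Lambda} \leq \TDLph{\Lambda_p}$; both rely on the interplay between the concavity of $\TDLph{\Lambda}$ and the rigidity of the upper bound $\TDLph{\TDLn^+}$.
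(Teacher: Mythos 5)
Your proof is correct and follows essentially the same route as the paper: dominate $\Lambda$ by the tent function $\Lambda_p$ with $p<\tfrac12$, propagate this bound through the iterates via Corollary~\ref{cor:markov_product_monotonicity}, and invoke the example's computation that $\Lambda_p^{*n}\to 0$, with homogeneity extending the conclusion from the simplex to $\R_+^2$. The only differences are minor: you spell out the concavity argument showing $p<\tfrac12$ (which the paper merely asserts) and obtain the Ces\`aro statement from the elementary fact that Ces\`aro means of a convergent sequence converge to the same limit, whereas the paper instead shows the Ces\`aro sums decrease and applies Dini's theorem, which yields uniform convergence -- more than the stated theorem requires.
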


This result gives another indication that the Markov product has smoothing properties, as tail independence corresponds to Fréchet-differentiability of $C$ in zero. 

\begin{proof}
If $\Lambda = \TDLn^+$, the result is immediate. 
Thus, consider a tail dependence function $\Lambda$ with $\Lambda \neq \TDLn^+$.
Define 
\begin{equation*}
	p := \max\limits_{t \in [0, 1]} \TDLph{\Lambda}(t) < \frac{1}{2} ~.
\end{equation*}
and set 
\begin{equation*}
	\TDLph{\Lambda_p}(s) := 
		\begin{cases} 
			s	&, 0 \leq s \leq p \\
			p	&, p \leq s \leq 1-p \\
			1-s	&, 1-p \leq s \leq 1
		\end{cases} ~.
\end{equation*}
Thus, $\Lambda_p$ dominates $\Lambda$, i.e. $\Lambda \leq \Lambda_p$, and Corollary~\ref{cor:markov_product_monotonicity} yields by induction
\begin{equation*}
	\Lambda^{*n}(\boldm{w}) = \Lambda^{*(n-1)} * \Lambda (\boldm{w}) \leq \Lambda^{*(n-1)} * \Lambda_p (\boldm{w}) \leq \Lambda_p^{*n}(\boldm{w}) \rightarrow 0 
\end{equation*}
for any $p < \frac{1}{2}$.
For the second statement, we only need to verify that the partial Cesàro sums are decreasing. 
Applying the monotonicity of $*$ yields
\begin{align*}
	\frac{1}{n} \sum\limits_{\ell = 1}^n \Lambda^{*\ell} - \frac{1}{n+1} \sum\limits_{\ell = 1}^{n+1} \Lambda^{*\ell}
	&= \rbraces{\frac{1}{n} - \frac{1}{n+1}} \sum\limits_{\ell = 1}^{n} \Lambda^{*\ell} - \frac{1}{n+1} \Lambda^{*(n+1)}	\\
	&= \frac{1}{n(n+1)} \sum\limits_{\ell = 1}^{n} \Lambda^{*\ell} - \frac{1}{n+1} \Lambda^{*(n+1)}	\\
	&\geq \frac{1}{n(n+1)} \sum\limits_{\ell = 1}^{n} \Lambda^{* n} - \frac{1}{n+1} \Lambda^{* (n+1)} \\
	&= \frac{\Lambda^{* n} - \Lambda^{* (n+1)}}{n+1} \geq 0 ~.
\end{align*}
The limit of a mean of concave functions is again concave and bounded and thus a bivariate tail dependence function. 
Moreover, Dini's theorem implies that the monotone convergence of continuous functions on a compact set to a continuous function must be uniform, i.e.
\begin{equation*}
	\lim\limits_{n \rightarrow \infty} \norm{\frac{1}{n} \sum\limits_{\ell = 1}^n \TDLph{\Lambda^{*\ell}} - \TDLph{\TDLn^*}}_\infty = 0 ~. \qedhere
\end{equation*}

\end{proof}

This theorem has two immediate corollaries, one in regard to idempotent tail dependence functions, and the other to the connection to the tail behaviour of the generalized Markov product. 

\begin{corollary}
A bivariate tail dependence function $\Lambda \in \Md{2}$ is idempotent, i.e. $\Lambda * \Lambda = \Lambda$, if and only if $\Lambda=\TDLn^+$ or $\Lambda=\TDLprod$. 
\end{corollary}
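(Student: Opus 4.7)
The plan is to exploit the immediately preceding theorem on the limit behaviour of the iterates $\Lambda^{*n}$, together with the algebraic role of $\TDLn^+$ as unit element and $\TDLprod$ as null element established in Proposition~\ref{prop:algebraic_prop}.

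For the ``if'' direction, I would simply invoke Proposition~\ref{prop:algebraic_prop}: if $\Lambda = \TDLn^+$, then $\Lambda * \Lambda = \TDLn^+ * \TDLn^+ = \TDLn^+ = \Lambda$ because $\TDLn^+$ is the unit element of the binary Markov product on $\Md{2} \times \Md{2}$; if $\Lambda = \TDLprod$, then $\Lambda * \Lambda = \TDLprod$ since $\TDLprod$ is the null element. So both are trivially idempotent.

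For the ``only if'' direction, suppose $\Lambda * \Lambda = \Lambda$. Then an easy induction on $n$ shows that $\Lambda^{*n} = \Lambda$ for every $n \geq 1$, so the sequence $(\Lambda^{*n}(\boldm{w}))_{n}$ is constant equal to $\Lambda(\boldm{w})$ for every $\boldm{w} \in \R_+^2$. Passing to the limit and applying the previous theorem on iterates, the limit equals $\TDL{\boldm{w}}{C^+}$ if $\Lambda = \TDLn^+$ and equals $\TDL{\boldm{w}}{\Pi} = 0$ otherwise. Since a constant sequence coincides with its limit, we conclude $\Lambda(\boldm{w}) = \TDLn^+(\boldm{w})$ or $\Lambda(\boldm{w}) = \TDLprod(\boldm{w})$ for all $\boldm{w}$, which gives the claim.

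There is essentially no obstacle here: the entire substance of the argument has already been absorbed into the previous theorem, which itself rested on the dominating family $\Lambda_p$ from the second example together with the monotonicity of the product in Corollary~\ref{cor:markov_product_monotonicity}. The corollary is a one-line consequence of that theorem. The only point requiring a minimal amount of care is to state cleanly that idempotency implies $\Lambda^{*n} = \Lambda$ for all $n$, so that the dichotomy supplied by the limit theorem can be applied pointwise.
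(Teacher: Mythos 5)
Your proposal is correct and follows essentially the same route as the paper: the paper also deduces idempotency implies $\Lambda = \lim_{n\to\infty}\Lambda^{*n}$ and then invokes the preceding theorem's dichotomy, while the ``if'' direction (unit and null element) is left implicit there but spelled out by you. No gaps.
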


\begin{proof}
If $\Lambda$ is idempotent, we have
\begin{equation*}
	\Lambda (\boldm{w}) = \lim\limits_{n \rightarrow \infty} \Lambda^{* n}(\boldm{w})		
						= 	\begin{cases} 
								\TDL{\boldm{w}}{C^+}	&\text{ for } \Lambda = \TDL{\cdot}{C^+} \\
								\TDL{\boldm{w}}{\Pi}	&\text{ for } \Lambda \neq \TDL{\cdot}{C^+} 
							\end{cases} ~. \qedhere
\end{equation*}
\end{proof}

Finally, we link the previous results to the tail behaviour of iterates and idempotents of $2$-copulas.

\begin{corollary}
Suppose $C$ is a twice continuously differentiable $2$-copula on $(0,1)^2$ with a strict tail dependence function.
If we define $\widehat{C}$ as the Cesàro sum of $C$ with respect to $*$, then 
\begin{equation*}
	\TDL{\boldm{w}}{\widehat{C}} 	= \lim\limits_{n \rightarrow \infty} \frac{1}{n} \sum\limits_{\ell = 1}^n \TDL{\boldm{w}}{C}^{*\ell} 
									= \begin{cases} 
											\TDL{\boldm{w}}{C^+}	&\text{ for } \TDL{\cdot}{C} = \TDL{\cdot}{C^+} \\
											\TDL{\boldm{w}}{\Pi}	&\text{ for } \TDL{\cdot}{C} \neq \TDL{\cdot}{C^+} 
						   			  \end{cases} ~.
\end{equation*}
\end{corollary}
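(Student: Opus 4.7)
The strategy is to combine the commutation $\TDL{\boldm{w}}{C^{*\ell}} = \TDL{\cdot}{C}^{*\ell}(\boldm{w})$ between copula iterates and tail-dependence-function iterates with the preceding theorem on Cesàro sums of iterates in $\Md{2}$. First I would prove by induction on $\ell$ that $\TDL{\boldm{w}}{C^{*\ell}} = \TDL{\cdot}{C}^{*\ell}(\boldm{w})$ for every $\ell \geq 1$. The inductive step writes $C^{*(\ell+1)} = C^{*\ell} * C$ and invokes Remark~\ref{remark:cor_thm_commutation_mp}; this requires (i) the pointwise a.e.\ convergence $\partial_1 C^{*\ell}(st, sw) \to \partial_1 \TDL{(t,w)}{C^{*\ell}}$ of the partial derivatives, which is inherited from the twice continuous differentiability of $C$ through the Markov product, and (ii) strictness of $\TDL{\cdot}{C^{*\ell}}$, which propagates inductively because Lemma~\ref{lemma:tdf_strict} combined with Proposition~\ref{prop:derivative_factorization} yields $\lim_{w_2 \to \infty} \partial_1 \TDLn_2(t, w_2) = 1$ for a strict $\TDLn_2$, so that dominated convergence inside the defining integral of $\TDLn_1 * \TDLn_2$ gives $\lim_{w_2 \to \infty} (\TDLn_1 * \TDLn_2)(w_1, w_2) = w_1$.

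Next, the map $C \mapsto \TDL{\boldm{w}}{C}$ is affine on convex combinations of copulas (being a pointwise limit), so for $D_n := \frac{1}{n} \sum_{\ell=1}^n C^{*\ell}$ we obtain
\begin{equation*}
    \TDL{\boldm{w}}{D_n} \;=\; \frac{1}{n} \sum_{\ell=1}^n \TDL{\boldm{w}}{C^{*\ell}} \;=\; \frac{1}{n} \sum_{\ell=1}^n \TDL{\cdot}{C}^{*\ell}(\boldm{w}).
\end{equation*}
By the theorem on iterates above, the right-hand side converges uniformly on the unit simplex (and therefore pointwise on $\R_+^2$ by positive homogeneity) to $\TDL{\boldm{w}}{C^+}$ if $\TDL{\cdot}{C} = \TDL{\cdot}{C^+}$ and to $0$ otherwise; this yields the second equality of the corollary.

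The remaining task, namely identifying $\TDL{\boldm{w}}{\widehat{C}}$ with $\lim_n \TDL{\boldm{w}}{D_n}$, is the step I expect to be the main obstacle. The subtlety is that the uniform convergence $D_n \to \widehat{C}$ on $[0,1]^2$ established in \cite{Trutschnig.2013} does not automatically carry over to tail dependence functions: an $L^\infty$-error $\varepsilon$ between two copulas translates into an error $\varepsilon/s$ after division by $s$, which blows up as $s \searrow 0$. To bypass this I would combine the pointwise monotonicity of $n \mapsto \TDL{\boldm{w}}{D_n}$ (decreasing, as observed in the proof of the preceding theorem) with the uniform bound $0 \leq C^{*\ell}(s \boldm{w})/s \leq \min\{w_1, w_2\}$ valid for all $\ell$ and $s > 0$, and apply a Moore--Osgood-type exchange of limits to conclude $\TDL{\boldm{w}}{\widehat{C}} = \lim_n \TDL{\boldm{w}}{D_n} = \Lambda^*(\boldm{w})$, completing the proof.
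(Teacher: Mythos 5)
Your first two steps are essentially the paper's own proof: the paper also proves $\TDL{\boldm{w}}{C^{*\ell}} = \TDL{\cdot}{C}^{*\ell}(\boldm{w})$ by induction, using that the commutation relation $\TDL{\boldm{w}}{C*D} = \TDL{\cdot}{C}*\TDL{\cdot}{D}(\boldm{w})$ holds for twice differentiable copulas with strict tail dependence functions, and that both twice differentiability and strictness are preserved under $*$ (your derivation of the strictness propagation via Lemma~\ref{lemma:tdf_strict} and Proposition~\ref{prop:derivative_factorization} is fine; in fact those two results give it immediately, since strictness is equivalent to $\TDLph{\Lambda}'(0)=1$ and $\TDLph{\Lambda}'(1)=-1$ and both properties multiply under $*$). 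Likewise, the linearity of $C \mapsto \TDL{\boldm{w}}{C}$ over the finite averages $D_n$ and the appeal to the preceding theorem for $\lim_n \frac{1}{n}\sum_{\ell=1}^n \Lambda^{*\ell}$ are exactly as in the paper.

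The gap is in the step you yourself flag as the main obstacle, and your proposed patch does not close it. Writing $f(n,s):=\frac{1}{ns}\sum_{\ell=1}^n C^{*\ell}(s\boldm{w})$, you must interchange $\lim_{s\searrow 0}\lim_{n\to\infty}f(n,s)$ (which is $\TDL{\boldm{w}}{\widehat{C}}$) with $\lim_{n\to\infty}\lim_{s\searrow 0}f(n,s)$ (which is $\lim_n \TDL{\boldm{w}}{D_n}$). A Moore--Osgood interchange needs one of the two convergences to be \emph{uniform} in the other variable, and neither of your ingredients supplies this: the monotonicity you invoke concerns the inner limits $h(n)=\lim_{s\searrow 0}f(n,s)$, i.e.\ a sequence in $n$ alone, and monotonicity plus a uniform bound is compatible with unequal iterated limits --- e.g.\ $f(n,s)=\max\{0,\,1-ns\}$ is bounded by $1$, decreasing in $n$ for each $s$, has $h(n)\equiv 1$ (monotone), yet $\lim_n\lim_s f=1\neq 0=\lim_s\lim_n f$. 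You also cannot upgrade to uniformity via a Dini-type argument: the copulas $D_n$ are \emph{not} pointwise monotone in $n$ (the dependence-reduction monotonicity holds for tail dependence functions, not for copulas, cf.\ $C^-*C^-=C^+$), and even monotone pointwise convergence on the non-compact range $s\in(0,s_0]$ would not be uniform without continuity at $s=0$, which is precisely what is at stake. The paper closes this step by a genuinely additional input: Theorem~2 of \cite{Trutschnig.2013} gives that $f(n,\cdot)$ converges as $n\to\infty$ uniformly in $s$, and only then is the interchange performed. To complete your proof you would have to import (or reprove) that uniform convergence; the elementary bound $0\le f(n,s)\le\min\{w_1,w_2\}$ and the monotonicity of $\TDL{\boldm{w}}{D_n}$ are not sufficient.
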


\begin{proof}
Consider the tail dependence function of $\widehat{C}$, i.e.
\begin{align*}
	\TDL{\boldm{w}}{\widehat{C}}	&= \lim\limits_{s \searrow 0} \frac{\widehat{C}(s\boldm{w})}{s}
									= \lim\limits_{s \searrow 0} \lim\limits_{n \rightarrow \infty} \frac{1}{n} \sum\limits_{\ell = 1}^n \frac{C^{*\ell}(s\boldm{w})}{s} 
									=: \lim\limits_{s \searrow 0} \lim\limits_{n \rightarrow \infty} f(n, s) ~.
\end{align*}
Note that $f(n, s)$ converges pointwise for fixed $s$ as $n \rightarrow \infty$ as well as pointwise for fixed $n$ as $s \searrow 0$. 
Moreover, Theorem~2 in \cite{Trutschnig.2013} implies the uniform convergence of $\lim_n f(n, s)$. 
Thus, the iterated limit above can be interchanged and it holds
\begin{align*}
	\TDL{\boldm{w}}{\widehat{C}}	&= \lim\limits_{s \searrow 0} \lim\limits_{n \rightarrow \infty} \frac{1}{n} \sum\limits_{\ell = 1}^n \frac{C^{*\ell}(s\boldm{w})}{s} \\
									&= \lim\limits_{n \rightarrow \infty} \lim\limits_{s \searrow 0} \frac{1}{n} \sum\limits_{\ell = 1}^n \frac{C^{*\ell}(s\boldm{w})}{s} \\
									&= \lim\limits_{n \rightarrow \infty}\frac{1}{n} \sum\limits_{\ell = 1}^n \TDL{\boldm{w}}{C^{*\ell}} \\
									&= \lim\limits_{n \rightarrow \infty}\frac{1}{n} \sum\limits_{\ell = 1}^n \TDL{\boldm{w}}{C}^{* \ell} ~.
\end{align*}
The last equality stems from an inductive argument utilizing 
\begin{equation*}
	\TDL{\boldm{w}}{C * D} = \TDL{\cdot}{C} * \TDL{\cdot}{D}(\boldm{w})
\end{equation*}
for all twice differentiable $2$-copulas $C$ and $D$.
The result follows from observing that $C*D$ is again twice differentiable if $C$ and $D$ are twice differentiable, and strict if both tail dependence functions are strict. 
\end{proof}

\section{Substochastic operators} \label{section:operator}

We previously saw the close resemblance of the set of $2$-copulas endowed with the Markov-product and the set of bivariate tail dependence function endowed with $*$.
In case of the set of $2$-copulas, \cite{Olsen.1996} derived an isomorphy to integral-preserving linear operators. 
Along those lines, we will subsequently draw a connection between a certain class of linear operators and bivariate tail dependence functions. 
For this we define the underlying space
\begin{equation*}
	 L^1(\R_+) + L^\infty(\R_+) := \set{f + g}{f \in  L^1(\R_+) \text{ and } g \in L^\infty(\R_+)} 
\end{equation*}
and both $L^1(\R_+)$ and $L^\infty(\R_+)$ are subsets of $L^1(\R_+) + L^\infty(\R_+)$. 

\begin{definition} \label{def:substochastic_operator}
A linear operator $T: L^1(\R_+) + L^\infty(\R_+) \rightarrow L^1(\R_+) + L^\infty(\R_+)$ is called doubly substochastic if
\begin{enumerate}
	\item $T$ is positive, i.e. $Tf \geq 0$ whenever $f \geq 0$. \label{def:substochastic_operator_positive}
	\item $T(L^1(\R_+)) \subset L^1(\R_+)$ and $T(L^\infty(\R_+)) \subset L^\infty(\R_+)$. \label{def:substochastic_operator_mapping}
	\item $T$ is non-expansive on $L^1(\R_+)$ and $L^\infty(\R_+)$, respectively, i.e. $\norm{Tf}_1 \leq \norm{f}_1$ and $\norm{Tg}_\infty \leq \norm{g}_\infty$	for all $f \in L^1(\R_+)$ and $g \in L^\infty(\R_+)$.\label{def:substochastic_operator_contraction}
	\item $Tf = \sup_n Tf_n$ for all $f_n  \in L^1(\R_+) \cap L^\infty(\R_+) $ and $f \in L^\infty(\R_+)$ such that $f_n \nearrow f$. \label{def:substochastic_operator_continuation}
\end{enumerate}
$T$ is called equivariant if
\begin{equation*}
	T \rbraces{f \circ \sigma} = (Tf) \circ \sigma
\end{equation*}
holds for all dilations $\sigma (x) := \frac{x}{s}$ with $s > 0$. 
\end{definition}

Substochastic operators can be seen as a generalization of Markov operators, in the same way as doubly substochastic matrices generalize doubly stochastic matrices.
Property~\ref{def:substochastic_operator_continuation} of Definition~\ref{def:substochastic_operator} is a technical requirement to ensure the unique continuation from $L^p(\R_+)$ to $L^\infty(\R_+)$ for $1 \leq p < \infty$.
This property is often used in the study of (Sub-)Markovian operators and semigroups, see, for example, Section~1.6 in \cite{Fukushima.2010}, but unnecessary in the case of $2$-copulas and bounded domains.
A complete introduction to substochastic operators can be found in \cite{Bennett.1988}. 
In the following, we will establish a one-to-one correspondence between substochastic operators and subdistribution functions (see, Theorem~\ref{thm:isometry_subdistribution_substochastic}).
While many of the proofs work similarly to the case of compact spaces in \cite{Olsen.1996}, some care is needed due to the underlying non-finiteness of the measure space $\R_+$. 

\begin{definition}
A function $F: \R_+^d \rightarrow \R_+$ is called a subdistribution function if it is positive, $d$-increasing, bounded by $\TDLn^+$ and Lipschitz continuous with Lipschitz constant $1$. 
\end{definition}

Subdistribution functions constitute a generalization of tail dependence functions, where the condition of positive homogeneity is relaxed.

\begin{remark}
Note that the class of $d$-variate tail dependence functions equals the positive homogeneous subdistribution functions. 
\end{remark}

\begin{lemma} \label{lma:operator_to_subdistribution}
Let $T$ be a doubly substochastic operator. Then
\begin{equation*}
	F_T (x, y) := \cInt{T\indFunc{[0, y]}(s)}{0}{x}{s}
\end{equation*}
is a bivariate subdistribution function.
If $T$ is additionally equivariant, then $F_T$ is a bivariate tail dependence function, i.e.\ $F_T(\cdot) = \TDL{\cdot}{C}$ for some $C \in \Cd{2}$. 
\end{lemma}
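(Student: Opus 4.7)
The plan is to verify one by one the defining properties of a subdistribution function for $F_T$, each time reading off the corresponding property from Definition~\ref{def:substochastic_operator}, and then to upgrade to positive homogeneity under the equivariance assumption by a direct change of variables.

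First I would check positivity: since $\indFunc{[0,y]} \geq 0$ and $T$ is positive (property~\ref{def:substochastic_operator_positive}), the integrand $T\indFunc{[0,y]}(s)$ is nonnegative, so $F_T \geq 0$. For the $2$-increasing property, I would observe that for $x_1 \leq x_2$ and $y_1 \leq y_2$ the linearity of $T$ gives
\begin{equation*}
V_{F_T}\bigl([x_1,x_2]\times[y_1,y_2]\bigr) = \cInt{T\indFunc{(y_1,y_2]}(s)}{x_1}{x_2}{s} \geq 0,
\end{equation*}
again by positivity. For the upper bound, the $L^\infty$-contraction in property~\ref{def:substochastic_operator_contraction} gives $T\indFunc{[0,y]}(s) \leq 1$ almost everywhere, so $F_T(x,y)\leq x$, while the $L^1$-contraction gives $\int_0^\infty T\indFunc{[0,y]}(s)\,ds \leq y$, hence $F_T(x,y)\leq y$. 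Together these yield $F_T(x,y) \leq \min\{x,y\} = \TDLn^+(x,y)$.

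Next, I would establish Lipschitz continuity with constant $1$. Fixing $y$ and using $\norm{T\indFunc{[0,y]}}_\infty\leq 1$ yields $|F_T(x_2,y)-F_T(x_1,y)|\leq |x_2-x_1|$. Fixing $x$ and using the $L^1$-contraction applied to $\indFunc{(y_1,y_2]}$ yields
\begin{equation*}
|F_T(x,y_2)-F_T(x,y_1)| \leq \cInt{T\indFunc{(y_1,y_2]}(s)}{0}{\infty}{s} \leq |y_2-y_1|.
\end{equation*}
A routine triangle inequality combines the two into Lipschitz continuity of $F_T$ on $\R_+^2$ with constant $1$. At this point $F_T$ satisfies the four conditions in the definition of a bivariate subdistribution function.

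For the equivariant case I would verify positive homogeneity of order one. Writing $\sigma_s(u) := u/s$ for $s>0$, the key identity is $\indFunc{[0,sy]}(u) = \indFunc{[0,y]}(u/s) = (\indFunc{[0,y]}\circ \sigma_s)(u)$. Equivariance then gives $T\indFunc{[0,sy]} = (T\indFunc{[0,y]})\circ \sigma_s$, so
\begin{equation*}
F_T(sx,sy) = \cInt{T\indFunc{[0,y]}(u/s)}{0}{sx}{u} = s\cInt{T\indFunc{[0,y]}(v)}{0}{x}{v} = sF_T(x,y),
\end{equation*}
after the substitution $v=u/s$. Combined with the four subdistribution properties already proved, Proposition~\ref{prop:tail_dep_func} (with Remark~\ref{remark:markov_product_subdistribution}) shows that $F_T$ is the tail dependence function of some copula $C\in\Cd{2}$. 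There is no real obstacle here; the only subtlety is that $T$ is only defined on equivalence classes, so $F_T(\cdot,y)$ must be interpreted via the integral, but both Lipschitz continuity and the change of variables go through on representatives without difficulty.
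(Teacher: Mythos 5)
Your proposal is correct and follows essentially the same route as the paper: verify positivity, the $2$-increasing property via linearity, the bound by $\TDLn^+$ via the $L^1$- and $L^\infty$-contractions, Lipschitz continuity with constant $1$ by splitting the increment, and then positive homogeneity from equivariance by the substitution $v=u/s$, concluding with Proposition~\ref{prop:tail_dep_func}. The only cosmetic difference is that you bound $T\indFunc{[0,y]}\leq 1$ directly, whereas the paper passes through $T\indFunc{\R_+}$; both rest on the same $L^\infty$-non-expansiveness.
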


\begin{proof}
We will check the properties \ref{prop:tail_dep_func_bounded}. - \ref{prop:tail_dep_func_homogeneous}. of Proposition~\ref{prop:tail_dep_func}.
\begin{enumerate}
	\item Because $0 \leq F_T$ is immediate for positive $T$, we only need to show that $F_T$ is bounded from above by $\TDL{\cdot}{C^+}$:
	\begin{equation*}
		\cInt{T\indFunc{[0, y]}(s)}{0}{x}{s} \leq \begin{cases}
			\cInt{T\indFunc{[0, y]}(s)}{0}{\infty}{s} \leq \cInt{\indFunc{[0, y]}(s)}{0}{\infty}{s} = y \\
			\cInt{T\indFunc{\R_+}(s)}{0}{x}{s} \leq x
		\end{cases} ~.
	\end{equation*}
	\item Let $R = [x_1, x_2] \times [y_1, y_2]$ with $x_1 \leq x_2$ and $y_1 \leq y_2$. Then the linearity of $T$ yields
	\begin{equation*}
		V_{F_T}(R) = \cInt{T \indFunc{[y_1, y_2]}}{x_1}{x_2}{s} \geq 0 ~.
	\end{equation*}
	\item Finally, we check the Lipschitz-continuity of $F_T$ with Lipschitz constant $1$. 
	For this, let $x_1, x_2, y_1$ and $y_2$ be in $\R_+$, then
	\begin{align*}
		\abs{F_T(x_2, y_2) - F_T(x_1, y_1)}	&\leq \abs{F_T(x_2, y_2) - F_T(x_1, y_2)} + \abs{F_T(x_1, y_2) - F_T(x_1, y_1)} \\
											&\leq \cInt{T \indFunc{[0, y_2]}(s)}{\min \braces{x_1, x_2}}{\max \braces{x_1, x_2}}{s} + \cInt{T \indFunc{[\min \braces{y_1, y_2}, \max \braces{y_1, y_2}]}(s)}{0}{x_1}{s} \\
											&\leq \norm{T \indFunc{[0, y_2]}}_\infty \abs{x_2 - x_1}  + \norm{T \indFunc{[\min \braces{y_1, y_2}, \max \braces{y_1, y_2}]}}_1 \\
											&\leq \norm{\indFunc{[0, y_2]}}_\infty \abs{x_2 - x_1} + \norm{\indFunc{[\min \braces{y_1, y_2}, \max \braces{y_1, y_2}]}}_1 \\
											&= 	\abs{x_2 - x_1}	+ \abs{y_2 - y_1} ~. 			
	\end{align*}
\end{enumerate}
Hence, $F_T$ is a bivariate subdistribution function. 
Finally, the positive homogeneity of $F_T$ follows from 
\begin{equation*}
	F_T(sx, sy) = \cInt{T\indFunc{[0, sy]}(t)}{0}{sx}{t} = \cInt{T \indFunc{[0, y]}\rbraces{\frac{t}{s}}}{0}{sx}{t} = \cInt{T \indFunc{[0, y]}\rbraces{z} s}{0}{x}{z} = s F_T(x, y)
\end{equation*}	
for any $s \geq 0$. 
Thus, $F_T$ is a positive homogeneous, bounded and $2$-increasing function, and the claim follows from Proposition~\ref{prop:tail_dep_func}. 
\end{proof}

\begin{lemma} \label{lma:subdistribution_to_operator}
Let $F$ be a bivariate subdistribution function. 
Then
\begin{align*}
	T_F: \quad	L^1(\R_+) + L^\infty(\R_+) &\rightarrow L^1(\R_+) + L^\infty(\R_+) \\
				T_F f(x) &= \partial_x \cInt{\partial_2 F(x, t) f(t)}{0}{\infty}{t}  
\end{align*}
defines a doubly substochastic operator.
Moreover, if $F$ is a bivariate tail dependence function, then $T_F$ is equivariant. 
\end{lemma}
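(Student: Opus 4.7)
The plan is to verify in turn the four defining properties of doubly substochastic operators from Definition~\ref{def:substochastic_operator}, and then to establish equivariance under the extra positive homogeneity hypothesis. The guiding idea is to interpret $F$ measure-theoretically: since $F$ is $2$-increasing, bounded, and Lipschitz with constant $1$, it determines a non-negative Borel measure $\mu_F$ on $\R_+^2$ via $\mu_F([0,x]\times[0,y]) = F(x,y)$, whose two marginals are dominated by Lebesgue measure with densities bounded by $1$. The inner integral in the definition can then be rewritten as $g_f(x) := \int_{\R_+} f(t)\, \mu_F([0,x]\times dt)$, and $T_F f = g_f'$ is the Radon--Nikodym density, with respect to Lebesgue on the first coordinate, of the signed measure $A \mapsto \int_{A\times\R_+} f(t)\, \mu_F(du,dt)$.

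With this picture in hand, I would first test $T_F$ on indicator functions $f = \indFunc{[a,b]}$, for which $g_f(x) = F(x,b) - F(x,a)$ and hence $T_F f(x) = \partial_1 F(x,b) - \partial_1 F(x,a) \geq 0$ by the $2$-increasing property of $F$; linearity and monotone approximation then extend positivity (\ref{def:substochastic_operator_positive}) to arbitrary non-negative $f \in L^1(\R_+) + L^\infty(\R_+)$. The contraction estimates in (\ref{def:substochastic_operator_contraction}), which subsume (\ref{def:substochastic_operator_mapping}), follow directly from the push-forward description: for $f \geq 0$ in $L^1$, the fundamental theorem of calculus and $\pi_2(\mu_F) \leq \mathrm{Leb}$ give $\|T_F f\|_1 \leq \|f\|_1$; for $f \in L^\infty$, monotonicity of $T_F$ yields $|T_F f| \leq \|f\|_\infty\, T_F\mathbf{1}$, and $T_F\mathbf{1} \leq 1$ holds because $\mu_F([0,x]\times \R_+) = \lim_{t\to\infty} F(x,t) \leq x$ by $F \leq \Lambda^+$, so that $T_F\mathbf{1}$, being the derivative of a $1$-Lipschitz function, is at most $1$ almost everywhere. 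A standard $f = f^+ - f^-$ decomposition reduces the signed case to the non-negative one.

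The delicate point I expect to be the main obstacle is the continuation property (\ref{def:substochastic_operator_continuation}). Given a sequence $f_n \in L^1(\R_+) \cap L^\infty(\R_+)$ with $f_n \nearrow f \in L^\infty(\R_+)$, monotone convergence gives $g_{f_n}(x) \nearrow g_f(x)$ pointwise, and both sequences $(g_{f_n})_n$ and the limit $g_f$ are monotone increasing in $x$ since the $2$-increasing property of $F$ makes $\partial_2 F(\cdot,t)$ increasing. Here I would invoke the classical Fubini theorem on differentiation of monotone series, which ensures that the pointwise monotone limit of monotone increasing functions has derivatives converging almost everywhere. This yields $T_F f_n(x) \to T_F f(x)$ a.e., and monotonicity of the sequence $(T_F f_n)_n$ (from positivity) upgrades this to $T_F f = \sup_n T_F f_n$.

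Finally, for equivariance under the additional positive homogeneity of $F$, the homogeneity of degree zero of $\partial_2 F$, combined with the change of variables $t = s\tau$, gives
\begin{equation*}
    g_{f\circ\sigma}(x) = \int_0^\infty \partial_2 F(x,t)\, f(t/s)\, dt = s \int_0^\infty \partial_2 F(x/s,\tau)\, f(\tau)\, d\tau = s\, g_f(x/s),
\end{equation*}
and differentiating in $x$ yields $T_F(f\circ\sigma)(x) = g_f'(x/s) = (T_F f)\circ\sigma(x)$, as required.
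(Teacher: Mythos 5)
Your proposal is correct in substance but follows a genuinely different route from the paper's. The paper works directly with $\partial_2 F$: existence and positivity of $T_F f$ come from the monotonicity of $x \mapsto \partial_2 F(x,t)$, the $L^\infty$ bound from a Lipschitz estimate on the inner integral, the $L^1$ bound from $0 \leq \partial_2 F(R,t) \leq 1$ together with absolute continuity, and the continuation property \ref{def:substochastic_operator_continuation}. is proved first for the special truncations $f_n = f\indFunc{[0,n]}$ (showing the remainder has vanishing integral, hence vanishes a.e.) and then transferred to arbitrary approximating sequences via a $\min\{f_n, g_k\}$ argument; equivariance is checked on indicators. You instead encode $F$ as the Lebesgue--Stieltjes measure $\mu_F$ (legitimate, since $0 \leq F \leq \Lambda^+$ forces $F$ to be grounded), read $T_F f$ as a Radon--Nikodym derivative, obtain the $L^1$ contraction from the marginal bound $\pi_2(\mu_F) \leq \mathrm{Leb}$, the $L^\infty$ contraction from $\abs{T_F f} \leq \norm{f}_\infty\, T_F \indFunc{\R_+} \leq \norm{f}_\infty$, and the continuation property from termwise differentiation, which has the advantage of handling an arbitrary approximating sequence at once and so dispenses with the paper's separate independence step; your equivariance computation is the same change of variables as the paper's, carried out for general $f$ rather than only for indicators.

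One step needs repair. The principle you invoke --- that a pointwise monotone limit of nondecreasing functions has derivatives converging almost everywhere --- is false as stated: the dyadic staircases $g_n(x) = 2^{-n}\lfloor 2^n x\rfloor$ are nondecreasing and increase pointwise to $g(x) = x$, yet $g_n' = 0$ a.e.\ while $g' = 1$. (Moreover, $g_{f_n}$ itself need not be increasing in $x$ when $f_n$ takes negative values, which the hypotheses $f_n \in L^1(\R_+) \cap L^\infty(\R_+)$, $f_n \nearrow f$ permit.) What saves your argument is not the monotonicity of the functions $g_{f_n}$ but the monotonicity of their increments: since $f_{n+1} - f_n \geq 0$ and $x \mapsto \partial_2 F(x,t)$ is nondecreasing, each
\begin{equation*}
	u_n(x) := g_{f_{n+1}}(x) - g_{f_n}(x) = \int_0^\infty \partial_2 F(x,t)\rbraces{f_{n+1}(t) - f_n(t)}\,\mathrm{d}t
\end{equation*}
is a nondecreasing function of $x$, and $g_f - g_{f_1} = \sum_n u_n$ pointwise by monotone convergence (finite because $\partial_2 F(x,\cdot) \in L^1(\R_+) \cap L^\infty(\R_+)$). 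Fubini's theorem on termwise differentiation of series of monotone functions, applied on each interval $[0,N]$ and combined over a countable union of null sets, then yields $g_f' = g_{f_1}' + \sum_n u_n'$ a.e., that is $T_F f = \lim_n T_F f_n = \sup_n T_F f_n$ a.e., the last equality because $u_n' \geq 0$ a.e. With this substitution your proof of property \ref{def:substochastic_operator_continuation}., and hence of the whole lemma, goes through.
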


\begin{proof}
As $x \mapsto \partial_2 F(x, t)$ is increasing, we have that for $\abs{f} \pm f \geq 0$
\begin{equation*}
	\cInt{\partial_2 F(x, t) \rbraces{\abs{f} \pm f}(t)}{0}{\infty}{t}
\end{equation*}
is again an increasing function in $x$ and its derivative with respect to the first component exists. 
Thus, representing $f$ as a linear combination of $\abs{f} + f$ and $\abs{f} - f$ implies that $T_F f$ exists.
Let us now verify properties \ref{def:substochastic_operator_positive}.-\ref{def:substochastic_operator_contraction}. of Definition~\ref{def:substochastic_operator}. 
\begin{enumerate}
	\item Let $f$ be positive. As $\partial_2 F(x_2, t)- \partial_2 F(x_1, t) \geq 0$ for $x_1 \leq x_2$, we have that
	\begin{equation*}
		\cInt{\partial_2 F(x_2, t) f(t)}{0}{\infty}{t} - \cInt{\partial_2 F(x_1, t) f(t)}{0}{\infty}{t} \geq 0
	\end{equation*}
	and hence $T_F f \geq 0$. 
	\item To prove \ref{def:substochastic_operator_mapping}. and \ref{def:substochastic_operator_contraction}., we first consider $f \in L^\infty(\R_+)$ and note that
	\begin{equation*}
		g(x) := \cInt{\partial_2 F(x, t) f(t)}{0}{\infty}{t} 
	\end{equation*}
	is Lipschitz continuous with Lipschitz constant $L = \norm{f}_\infty$.
	Because for $x_1 \leq x_2$, we have
	\begin{gather}
	\begin{aligned} \label{lma:subdistribution_to_operator_continuity}
	\abs{g(x_2) - g(x_1)} 	&= \abs{\cInt{\rbraces{\partial_2 F(x_2, t) - \partial_2 F(x_1, t)} f(t)}{0}{\infty}{t} }\\
							&\leq \norm{f}_\infty \cInt{\abs{\partial_2 F(x_2, t) - \partial_2 F(x_1, t)}}{0}{\infty}{t} \\
							&= \norm{f}_\infty \cInt{\rbraces{\partial_2 F(x_2, t) - \partial_2 F(x_1, t)}}{0}{\infty}{t} \\
							&= \norm{f}_\infty \lim\limits_{R \rightarrow \infty} \cbraces{F(x_2, t) - F(x_1, t)}_0^R 
							\leq \norm{f}_\infty \abs{x_2 - x_1} ~,
	\end{aligned}
	\end{gather}
	where the second equality is due to $x \mapsto \partial_2 F(x, t)$ being increasing, as $F$ is $2$-increasing.
	The last inequality follows from the Lipschitz-continuity of $F$.
	Thus, $T_F$ is non-expansive on $L^\infty(\R_+)$. \\
	Now let $f$ be in $L^1(\R_+)$. Combining the linearity and positivity of $T_F$ leads to 
	\begin{equation*}
		\abs{Tf} = \abs{T (f^+ - f^-)} \leq T f^+ + T f^- = T \abs{f} ~.
	\end{equation*}
	Thus, without loss of generality, let $f$ be positive. Then using the absolute continuity of $g$, we have
	\begin{align*}
		\cInt{T_F f (x)}{0}{\infty}{x}	&= \cInt{\partial_x \cInt{\partial_2 F(x, t) f(t)}{0}{\infty}{t}}{0}{\infty}{x} 
											= \lim\limits_{R \rightarrow \infty} \cInt{\partial_x \cInt{\partial_2 F(x, t) f(t)}{0}{\infty}{t}}{0}{R}{x} \\
											&= \lim\limits_{R \rightarrow \infty} \cInt{\partial_2 F(R, t) f(t)}{0}{\infty}{t}
											\leq \lim\limits_{R \rightarrow \infty} \cInt{f(t)}{0}{\infty}{t} 
											= \cInt{f(t)}{0}{\infty}{t}  ~,
	\end{align*}
	due to $0 \leq \partial_2 F(R, t) \leq 1$ and therefore $T_F$ is non-expansive on $L^1(\R_+)$. 
	\item It remains to show Property~\ref{def:substochastic_operator_continuation}. 
	To do so, we assume $f \geq 0$, otherwise one can use the decomposition $f = f^+ - f^-$ and treat $f^+$ and $f^-$ separately. 
	We first choose $f_n := f \indFunc{[0, n]} \nearrow f$ and set $h_n := f - f_n \searrow 0$.
	As $T_F h_n \geq 0$ is a decreasing sequence due to the positivity of $T_F$, it converges towards a measurable $h \geq 0$. 
	Moreover, 
	\begin{equation*}
		g_n(x)	:= \cInt{\partial_2 F(x, t) h_n(t)}{0}{\infty}{t}
				= \cInt{\partial_2 F(x, t) f(t)}{n}{\infty}{t}
				\rightarrow 0 ~,
	\end{equation*}
	as $\partial_2 F(x, t) \in L^1(\R_+) \cap L^\infty(\R_+)$ and $f \in L^\infty(\R_+)$. 
	Thus, for all $x \in \R_+$ we have by the absolute continuity of $g_n$
	\begin{align*}
		0 	&= \lim\limits_{n \rightarrow \infty} g_n(x)
			= \lim\limits_{n \rightarrow \infty} \cInt{g'_n(t)}{0}{x}{t}
			= \lim\limits_{n \rightarrow \infty} \cInt{T_F h_n (t)}{0}{x}{t} \\
			&= \cInt{\lim\limits_{n \rightarrow \infty} T_F h_n (t)}{0}{x}{t}
			= \cInt{h(t)}{0}{x}{t}
			\geq 0 ~,
	\end{align*}
	where the exchange of the integral and limit is legitimate due to an application of the monotone convergence theorem. 
	Therefore,  $h = 0$ holds almost everywhere and 
	\begin{equation*}
		T_F (f - f_n) = T_F h_n \searrow 0	\implies T_F f_n \nearrow T_F f ~.
	\end{equation*}
	Finally, suppose $f_n, g_k \in L^1(\R_+) \cap L^\infty(\R_+)$ and $f_n, g_k \nearrow f$. 
	Due to $\min\braces{f_n, g_k} \nearrow f_n$ as $k$ tends to $\infty$, it holds that
	\begin{equation*}
		T_F f_n = \sup\limits_{k \in \mathbb{N}} T_F(\min\braces{f_n, g_k}) 
				\leq \sup\limits_{k \in \mathbb{N}} T_F g_k ~.
	\end{equation*}
	Switching the roles of $f_n$ and $g_k$ yields the independence from the approximating sequence.
\end{enumerate}
Combining the previous three results, one sees that $T$ is an operator from $L^1(\R_+) + L^\infty(\R_+)$ onto $L^1(\R_+) + L^\infty(\R_+)$ and therefore doubly substochastic.
If $F$ is also positive homogeneous, then for any $s > 0$
\begin{align*}
	T_F	\indFunc{[0, sy]}(x)	&= \partial_x \cInt{\partial_2 F(x, t) \indFunc{[0, sy]}(t)}{0}{\infty}{t} 
								= \partial_x \cInt{\partial_2 F(x, t) \indFunc{[0, y]}\rbraces{\frac{t}{s}}}{0}{\infty}{t} \\
								&= \partial_x \cInt{\partial_2 F(x, sz) \indFunc{[0, y]}\rbraces{z} s}{0}{\infty}{z} \\
								&= \partial_x \cInt{\partial_2 F \rbraces{\frac{x}{s}, z} \indFunc{[0, y]}\rbraces{z}}{0}{\infty}{z} 
								= T_F \indFunc{[0, y]}\rbraces{\frac{x}{s}} ~. \qedhere
\end{align*}
\end{proof}

As a consequence of Lemma~\ref{lma:operator_to_subdistribution} and \ref{lma:subdistribution_to_operator}, we obtain the main result establishing the correspondence between subdistribution functions and substochastic operators. 

\begin{theorem} \label{thm:isometry_subdistribution_substochastic}
Let $F$ be a bivariate subdistribution function and $T$ a substochastic operator, and define $\Phi(T) := F_T$ and $\Psi(F) := T_F$.
Then $\Phi \circ \Psi$ and $\Psi \circ \Phi$ define identities on their respective spaces. 
Furthermore, $F$ is positive homogeneous if and only if $T_F$ is equivariant. 
\end{theorem}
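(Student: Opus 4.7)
The plan is to verify three things: (a) $F_{T_F}=F$ for every subdistribution function $F$, (b) $T_{F_T}=T$ for every doubly substochastic operator $T$, and (c) the equivalence of positive homogeneity of $F$ and equivariance of $T_F$. The whole argument reduces to behaviour on indicators $\indFunc{[0,y]}$, propagated to $L^1(\R_+)+L^\infty(\R_+)$ by linearity plus the two density/continuity mechanisms already built into Definition~\ref{def:substochastic_operator}: $L^1$-contractivity and Property~\ref{def:substochastic_operator_continuation}.

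For (a) I would argue by direct computation. Since any subdistribution $F$ satisfies $0\le F(\cdot,0)\le \TDLn^+(\cdot,0)=0$ and likewise $F(0,\cdot)=0$, we obtain
\begin{align*}
T_F\indFunc{[0,y]}(s) &= \partial_s \cInt{\partial_2 F(s,t)\indFunc{[0,y]}(t)}{0}{\infty}{t} = \partial_s\bigl(F(s,y)-F(s,0)\bigr)=\partial_s F(s,y),
\end{align*}
so that $F_{T_F}(x,y)=\cInt{\partial_s F(s,y)}{0}{x}{s}=F(x,y)-F(0,y)=F(x,y)$, using that $s\mapsto F(s,y)$ is Lipschitz and hence absolutely continuous.

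For (b) I would first check agreement on indicators. Applying the same computation to $T_{F_T}$ together with $F_T(x,0)=0$ gives $T_{F_T}\indFunc{[0,y]}(x)=\partial_x F_T(x,y)=T\indFunc{[0,y]}(x)$ almost everywhere, since $s\mapsto T\indFunc{[0,y]}(s)\in L^1(\R_+)$. Linearity then yields $T_{F_T}=T$ on every finite linear combination of indicators $\indFunc{[0,y]}$, and by writing $\indFunc{[a,b]}=\indFunc{[0,b]}-\indFunc{[0,a]}$ on all bounded step functions of compact support. These are dense in $L^1(\R_+)$, and both $T$ and $T_{F_T}$ are $L^1$-contractions, so $T_{F_T}=T$ on $L^1(\R_+)$. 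To pass to $L^\infty(\R_+)$, reduce to $f\ge 0$ via $f=f^+-f^-$ and choose step functions $s_k\nearrow f$ with $s_k\in L^1(\R_+)\cap L^\infty(\R_+)$; Property~\ref{def:substochastic_operator_continuation} of Definition~\ref{def:substochastic_operator} then gives $Ts_k\nearrow Tf$ and $T_{F_T}s_k\nearrow T_{F_T}f$, so the two limits coincide. Any $f\in L^1(\R_+)+L^\infty(\R_+)$ decomposes as $f_1+f_\infty$, so the result follows by linearity.

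The main technical point I expect to need care with is exactly this extension in (b), because Property~\ref{def:substochastic_operator_continuation} has to be invoked for \emph{both} operators and only applies to sequences $f_n\in L^1\cap L^\infty$ increasing pointwise to $f\in L^\infty$; I would therefore explicitly produce the approximating sequence $s_k=2^{-k}\lfloor 2^k f\rfloor\indFunc{[0,k]}$. For (c), one direction is already contained in Lemma~\ref{lma:subdistribution_to_operator}: positive homogeneity of $F$ implies equivariance of $T_F$. For the converse, if $T_F$ is equivariant then Lemma~\ref{lma:operator_to_subdistribution} shows that $F_{T_F}$ is positive homogeneous, and by part (a) we have $F=F_{T_F}$, so $F$ is positive homogeneous. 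This closes the equivalence and completes the isometry statement.
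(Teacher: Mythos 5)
Your proposal is correct and follows essentially the same route as the paper: verify both identities on indicator functions $\indFunc{[0,y]}$ via the Lipschitz/absolute-continuity computations, extend to $L^1(\R_+)$ by density and $L^1$-contractivity, pass to $L^\infty(\R_+)$ via Property~\ref{def:substochastic_operator_continuation}, and obtain the homogeneity/equivariance equivalence from Lemmas~\ref{lma:operator_to_subdistribution} and \ref{lma:subdistribution_to_operator}. The only difference is cosmetic: you spell out explicitly the approximation argument that the paper delegates to Lemma~2.2 of Olsen (1996).
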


\begin{proof}
First, let $F$ be a subdistribution function. Then using the Lipschitz continuity of $F$
\begin{align*}
	\Phi \circ \Psi (F) (x, y)	&= \cInt{\Psi(F) \indFunc{[0, y]}(s)}{0}{x}{s}									
								= \cInt{\partial_s \cInt{\partial_2 F(s, t) \indFunc{[0, y]}(t)}{0}{\infty}{t}}{0}{x}{s} \\
								&= \cInt{\partial_s \cInt{\partial_2 F(s, t)}{0}{y}{t}}{0}{x}{s}
								= \cInt{\partial_s F(s, y)}{0}{x}{s}
								= F(x, y) ~.
\end{align*}
Conversely, let $T$ be a substochastic operator and $f(t) = \indFunc{[0, y]}(t)$. 
Then the absolute continuity yields
\begin{align*}
	\Psi \circ \Phi (T) f(x)	&= \partial_x \cInt{\partial_2 \Phi(T)(x, t) f(t)}{0}{\infty}{t}
								= \partial_x \cInt{\partial_t \cInt{T \indFunc{[0, t]}(s)}{0}{x}{s} f(t)}{0}{\infty}{t} \\
								&= \partial_x \cInt{\partial_t \cInt{T \indFunc{[0, t]}(s)}{0}{x}{s}}{0}{y}{t} 
								= \partial_x \cInt{T \indFunc{[0, y]}(s)}{0}{x}{s}
								= T \indFunc{[0, y]}(x) ~.
\end{align*}
Thus $\Psi \circ \Phi (T)$ and $T$ are substochastic operators which agree on $[0, y]$ and, following the argument in Lemma 2.2 of \cite{Olsen.1996}, agree on $L^1(\R_+)$. 
Property~\ref{def:substochastic_operator_continuation} of Definition~\ref{def:substochastic_operator} now ensures that $T$ and $\Psi \circ \Phi$ conincide on $L^1(\R_+) + L^\infty(\R_+)$.
Finally, Lemma~\ref{lma:operator_to_subdistribution} and \ref{lma:subdistribution_to_operator} yield the equivalence between the positive homogeneity of $F$ and the equivariance of $T$. 
\end{proof}
The correspondence between substochastic operators and subdistribution functions is a structure-preserving isomorphism translating $*$ into $\circ$ and vice versa.  
To verify the structure preservation property, we need to introduce a slight generalization of the previously introduced Markov product for tail dependence functions. 
The Markov product of two subdistribution functions $F$ and $G$ is defined for all $\boldm{w} \in \R_+^2$ as
\begin{equation*}
	(F * G) (\boldm{w})	= \cInt{\partial_2 F(w_1, t) \partial_1 G(t, w_2)}{0}{\infty}{t} ~.
\end{equation*}
We want to show that $F*G$ is again a subdistribution function.
Applying Remark~\ref{remark:markov_product_subdistribution}, it only remains to show the Lipschitz continuity of $(F * G)$, which follows as in Equation~\eqref{lma:subdistribution_to_operator_continuity}.

\begin{theorem}
Let $F$ and $G$ be subdistribution functions. 
Then
\begin{equation*}
	T_{F * G} = T_{F} \circ T_{G} ~.
\end{equation*}
\end{theorem}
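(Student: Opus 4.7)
The plan is to reduce the identity to its action on the generating indicator functions $\mathbbm{1}_{[0,y]}$, since the preceding Theorem~\ref{thm:isometry_subdistribution_substochastic} already provides a uniqueness mechanism for substochastic operators determined by their values on such indicators (via the correspondence $T \mapsto F_T$, extended through Lemma~2.2 of \cite{Olsen.1996} and Property~\ref{def:substochastic_operator_continuation} of Definition~\ref{def:substochastic_operator}). So the first step is to observe that $T_F \circ T_G$ is itself doubly substochastic: positivity, the mapping property, and the $L^1$- and $L^\infty$-contractions are immediate from composing the corresponding properties of $T_F$ and $T_G$, while Property~\ref{def:substochastic_operator_continuation} passes through a composition since if $f_n \nearrow f$ in $L^\infty$ with $f_n \in L^1 \cap L^\infty$, then $T_G f_n \nearrow T_G f$ (with $T_G f_n \in L^1 \cap L^\infty$ by Property~\ref{def:substochastic_operator_mapping}) and then $T_F T_G f_n \nearrow T_F T_G f$.

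The second step is the concrete computation on indicators. By the defining formula,
\begin{equation*}
	T_G \mathbbm{1}_{[0,y]}(t)
		= \partial_t \cInt{\partial_2 G(t, \tau) \mathbbm{1}_{[0,y]}(\tau)}{0}{\infty}{\tau}
		= \partial_t G(t, y) = \partial_1 G(t, y) ~.
\end{equation*}
Since $\partial_1 G(\cdot, y)$ is bounded by $1$ and has $L^1$-norm at most $y$ (arguments as in Lemma~\ref{lma:subdistribution_to_operator}), it lies in $L^1(\R_+) \cap L^\infty(\R_+)$. Applying $T_F$ and using the definition of $F*G$,
\begin{equation*}
	(T_F \circ T_G)\mathbbm{1}_{[0,y]}(x)
		= \partial_x \cInt{\partial_2 F(x, t) \partial_1 G(t, y)}{0}{\infty}{t}
		= \partial_1 (F*G)(x, y)
		= T_{F*G} \mathbbm{1}_{[0,y]}(x) ~,
\end{equation*}
where the last equality is the same computation applied to the subdistribution function $F*G$ (which is a subdistribution function by the discussion preceding the theorem statement).

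The third step is to pass from indicators to general functions in $L^1(\R_+) + L^\infty(\R_+)$. Differences $\mathbbm{1}_{[a,b]} = \mathbbm{1}_{[0,b]} - \mathbbm{1}_{[0,a]}$ together with linearity extend the identity to simple functions supported on bounded intervals, and then density in $L^1(\R_+)$ together with the $L^1$-contraction of both operators (Property~\ref{def:substochastic_operator_contraction}) gives agreement on all of $L^1(\R_+)$. Finally, for $f \in L^\infty(\R_+)$ the truncations $f_n := f\mathbbm{1}_{[0,n]}$ satisfy $f_n \in L^1 \cap L^\infty$ and $f_n^{\pm} \nearrow f^{\pm}$, so Property~\ref{def:substochastic_operator_continuation} applied to both $T_{F*G}$ and $T_F \circ T_G$ yields agreement on $L^\infty(\R_+)$ and hence on $L^1(\R_+) + L^\infty(\R_+)$.

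The main technical obstacle is twofold. First, one must confirm that $T_F \circ T_G$ really inherits Property~\ref{def:substochastic_operator_continuation}; this is where one has to use that $T_G$ maps $L^1 \cap L^\infty$ into itself together with monotone convergence, rather than relying solely on formal composition. Second, in Step~3 one has to handle the fact that $\R_+$ is of infinite measure, so the standard compact-domain density argument of \cite{Olsen.1996} needs the truncation-plus-Property~\ref{def:substochastic_operator_continuation} extension rather than simply density of step functions in $L^1[0,1]$. Everything else reduces to the elementary computation of $T_G\mathbbm{1}_{[0,y]}$ and the definition of $*$.
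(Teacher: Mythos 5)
Your proposal is correct and follows essentially the same route as the paper: both arguments reduce the identity to the action on the indicators $\indFunc{[0,y]}$ (equivalently, to showing that the subdistribution function induced by $T_F \circ T_G$ is $F*G$) and then conclude by the uniqueness mechanism behind Theorem~\ref{thm:isometry_subdistribution_substochastic}. The only difference is cosmetic: you replay the density-plus-Property~\ref{def:substochastic_operator_continuation} extension by hand and explicitly check that $T_F \circ T_G$ is doubly substochastic, a step the paper leaves implicit when it invokes the bijection $\Phi, \Psi$.
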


\begin{proof}
In view of Theorem~\ref{thm:isometry_subdistribution_substochastic}, it suffices to prove that
\begin{equation*}
	\Phi \rbraces{T_F \circ T_G} (\boldm{w}) = \Phi \rbraces{T_{F * G}}(\boldm{w}) = (F * G)(\boldm{w})
\end{equation*}
for all $w \in \R_+^2$. 
To do so, we use the Lipschitz continuity to obtain
\begin{align*}
	\Phi \rbraces{T_F \circ T_G} (\boldm{w})	&= \cInt{(T_F \circ T_G) \indFunc{[0, w_2]}(s)}{0}{w_1}{s}
												= \cInt{\partial_s \cInt{\partial_2 F(s, t) T_G \indFunc{[0, w_2]}(t)}{0}{\infty}{t}}{0}{w_1}{s} \\
												&= \cInt{\partial_2 F(w_1, t) T_G \indFunc{[0, w_2]}(t)}{0}{\infty}{t} \\
												&= \cInt{\partial_2 F(w_1, t)  \partial_t \cInt{\partial_2 G(t, s) \indFunc{[0, w_2]}(s)}{0}{\infty}{s}}{0}{\infty}{t} \\
												&= \cInt{\partial_2 F(w_1, t)  \partial_1 G(t, w_2)}{0}{\infty}{t} 
												= F * G(\boldm{w}) ~. \qedhere
\end{align*}
\end{proof}

The Banach space adjoint of a substochastic operator $T_F$ corresponds to the doubly substochastic operator associated with the transpose $F^T$ of $F$ where $F^T(x, y) := F(y, x)$. 

\begin{proposition}
Let $f \in L^1(\R_+) + L^\infty (\R_+)$ and $g \in L^1(\R_+) \cap L^\infty (\R_+)$. 
Then
\begin{equation*}
	\cInt{(T_F f)(x) g(x)}{0}{\infty}{x} = \cInt{f(x) (T_{F^T} g)(x)}{0}{\infty}{x} ~.
\end{equation*}
\end{proposition}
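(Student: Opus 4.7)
The statement asserts that $T_{F^T}$ is the Banach-space adjoint of $T_F$ under the standard pairing $(u,v) \mapsto \cInt{u(x)v(x)}{0}{\infty}{x}$. The natural strategy is to verify the identity first on indicators $g = \indFunc{[0,z]}$, extend by linearity to simple functions, and pass to the limit for general $g \in L^1(\R_+) \cap L^\infty(\R_+)$, keeping $f$ fixed throughout.

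For the base case, set $H(x) := \cInt{\partial_2 F(x,t) f(t)}{0}{\infty}{t}$. Arguing as in Equation~\eqref{lma:subdistribution_to_operator_continuity} (and decomposing $f = f_1 + f_2$ into its $L^1$ and $L^\infty$ parts when needed) one sees that $H$ is absolutely continuous with $H(0) = 0$ and $H' = T_F f$ almost everywhere. The fundamental theorem of calculus then yields
\begin{equation*}
\cInt{(T_F f)(x)\indFunc{[0,z]}(x)}{0}{\infty}{x} = H(z) = \cInt{\partial_2 F(z,t) f(t)}{0}{\infty}{t}.
\end{equation*}
A direct computation shows $T_{F^T}\indFunc{[0,z]}(x) = \partial_x F^T(x,z) = \partial_2 F(z,x)$, so the right-hand side of the claim also reduces to the same expression. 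Linearity then handles all simple functions via $\indFunc{[a,b]} = \indFunc{[0,b]} - \indFunc{[0,a]}$.

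To pass to general $g \in L^1(\R_+) \cap L^\infty(\R_+)$, I would decompose $g = g^+ - g^-$ and approximate each positive part from below by simple functions $g_n \nearrow g^\pm$ satisfying $0 \leq g_n \leq \norm{g^\pm}_\infty$. Property~\ref{def:substochastic_operator_continuation} of Definition~\ref{def:substochastic_operator} yields $T_{F^T} g_n \nearrow T_{F^T} g^\pm$ pointwise, while the $L^1$-contractivity of $T_{F^T}$ together with $g_n \to g^\pm$ in $L^1$ additionally gives $T_{F^T} g_n \to T_{F^T} g^\pm$ in $L^1$. Splitting $f = f_1 + f_2 \in L^1(\R_+) + L^\infty(\R_+)$, I would pair $f_1$ with the uniformly bounded, pointwise-convergent sequence $T_{F^T} g_n$ via dominated convergence and $f_2$ with the $L^1$-convergent sequence; the analogous dichotomy on the left-hand side uses the decomposition $T_F f = T_F f_1 + T_F f_2$ together with the simple-function approximation of $g$.

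The calculation for indicators is routine; the main obstacle is the final convergence step, where one must carefully match each component of the $L^1 + L^\infty$ decomposition of $f$ with the correct mode of convergence of $T_{F^T} g_n$ (pointwise-dominated versus $L^1$). Property~\ref{def:substochastic_operator_continuation}, which was built into Definition~\ref{def:substochastic_operator} specifically to allow extension from $L^1 \cap L^\infty$ to the full space, is precisely what powers this last step.
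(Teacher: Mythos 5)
Your proposal is essentially correct but follows a genuinely different route from the paper. The paper reduces to $g \in \mathcal{C}^\infty_0(\R_+)$ by density and then integrates by parts following Lemma~2.4 of Olsen, with the boundary terms killed by $g(\infty) = 0$ and $\partial_2 F(0, t) = 0$; you instead build the adjoint identity from the ground up, verifying it on interval indicators via the fundamental theorem of calculus and the explicit formula $T_{F^T} \indFunc{[0,z]}(x) = \partial_2 F(z, x)$, so that both sides collapse to the same quantity $H(z) = \cInt{\partial_2 F(z,t) f(t)}{0}{\infty}{t}$, and then extend by approximation. Your base case is correct and arguably cleaner: it makes the symmetry of the pairing visible without any integration by parts, is self-contained (no external reference), and your final limiting step is careful about which component of $f = f_1 + f_2$ is paired with which mode of convergence of $T_{F^T} g_n$ --- a point the paper's proof glosses over. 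The only ingredient you implicitly rely on, the absolute continuity of $x \mapsto \cInt{\partial_2 F(x,t) f(t)}{0}{\infty}{t}$ needed for the FTC step, is the same fact the paper already uses in the proof of Lemma~\ref{lma:subdistribution_to_operator}, so this is consistent with the surrounding framework.

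There is, however, one step that does not work as literally stated: the monotone approximation $g_n \nearrow g^{\pm}$ by ``simple functions.'' For your base case and linearity to apply, each $g_n$ must be a finite linear combination of indicators of \emph{intervals}, i.e.\ a step function; but a general $g^{\pm} \in L^1(\R_+) \cap L^\infty(\R_+)$ cannot be approximated monotonically from below by step functions (take $g^{\pm}$ to be the indicator of a fat Cantor set: any step function lying below it almost everywhere is nonpositive almost everywhere). Consequently Property~\ref{def:substochastic_operator_continuation} of Definition~\ref{def:substochastic_operator} cannot be invoked in the way you describe. The repair is straightforward and does not need Property~\ref{def:substochastic_operator_continuation} at all: choose step functions $g_n \to g$ in $L^1(\R_+)$ with $\norm{g_n}_\infty \leq \norm{g}_\infty$ and (along a subsequence) almost everywhere; then $T_{F^T} g_n \to T_{F^T} g$ in $L^1(\R_+)$ by the $L^1$-contractivity with $\norm{T_{F^T} g_n}_\infty \leq \norm{g}_\infty$, and the four pairings you list ($f_1$ against a bounded a.e.-convergent sequence by dominated convergence, $f_2$ against an $L^1$-convergent sequence, and symmetrically for $T_F f_1$, $T_F f_2$ against $g_n$) go through exactly as you wrote them.
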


\begin{proof}
Let $f \in L^1(\R_+) + L^\infty (\R_+)$ and $g \in L^1(\R_+) \cap L^\infty (\R_+)$.
As the space of compactly supported and smooth functions is dense in $L^1(\R_+) \cap L^\infty (\R_+)$, we only need to show the desired result for $g \in \mathcal{C}^\infty_0(\R_+)$. 
Then an identical calculation to Lemma 2.4 from \cite{Olsen.1996} yields the result, except that for the partial integration, we additionally require $g(\infty) = 0$ and $\partial_2 F(0, t) = 0$, which holds due to $F(0, t) \equiv 0$. 
\end{proof}

Using this connection between the adjoint of $T$ and the transpose of $F$, we can establish a relation between strict subdistribution functions and Markov operators. 

\begin{definition}
Let $F$ be a bivariate subdistribution function. 
Then we call $F$ strict if 
\begin{equation*}
	\lim\limits_{t \rightarrow \infty} F(w_1, t) = w_1 \; \text{ and } \; \lim\limits_{t \rightarrow \infty} F(t, w_2) = w_2
\end{equation*}
for all $(w_1, w_2)$ in $\R_+^2$. 
\end{definition}

\begin{definition}
Let $T$ be a doubly substochastic operator. 
$T$ is called a doubly stochastic operator or Markov operator if
\begin{equation*}
	T \indFunc{\R_+} = \indFunc{\R_+} \; \text{ and } \cInt{Tf (x)}{0}{\infty}{x} = \cInt{f(x)}{0}{\infty}{x}
\end{equation*}
for all $f$ in $L^1(\R_+)$. 
\end{definition}

\begin{proposition}
Let $F$ be a bivariate subdistribution function. Then $F$ is strict if and only if $T_F$ and $T_{F^T}$ are Markov operators. 
\end{proposition}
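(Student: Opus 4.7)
The plan is to split the biconditional into two parallel equivalences---$T_F$ Markov iff $F$ is strict, and $T_{F^T}$ Markov iff $F$ is strict---and to handle the two defining conditions of a Markov operator (preservation of $\indFunc{\R_+}$ and preservation of the integral) one at a time. Each turns out to encode one of the two strictness limits, after which the symmetry of strictness in the two arguments immediately yields the statement for $T_{F^T}$.

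First I would unpack $T_F \indFunc{\R_+}$. Using the definition of $T_F$ together with $F(x, 0) = 0$ (which follows from $0 \leq F \leq \TDLn^+$), one obtains $T_F \indFunc{\R_+}(x) = \partial_x \lim_{R \to \infty} F(x, R)$. Hence $T_F \indFunc{\R_+} \equiv \indFunc{\R_+}$ is equivalent, after integration from $0$ to $x$ together with $F(0, R) = 0$, to $\lim_{R \to \infty} F(x, R) = x$, which is precisely the strictness of $F$ in its second variable.

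Next I would address the integral-preservation condition $\int T_F f = \int f$. The computation already carried out in the proof of Lemma~\ref{lma:subdistribution_to_operator} gives $\int_0^\infty T_F f(x)\, \mathrm{d} x = \lim_{R \to \infty} \int_0^\infty \partial_2 F(R, t)\, f(t)\, \mathrm{d} t$ for non-negative $f \in L^1(\R_+)$. Testing with $f = \indFunc{[0, a]}$ yields $\lim_{R \to \infty} F(R, a) = a$, which is strictness of $F$ in the first variable. For the converse, assuming $F$ is strict in the first variable, the identity $F(R, a) = \int_0^a \partial_2 F(R, s)\, \mathrm{d} s \to a$ combined with monotone convergence---applicable because $R \mapsto \partial_2 F(R, s)$ is increasing, a consequence of $F$ being $2$-increasing, and is bounded above by $1$---forces $\partial_2 F(R, s) \to 1$ almost everywhere. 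Dominated convergence then propagates $\int T_F f = \int f$ from indicators to all of $L^1(\R_+)$ by linearity and the usual splitting into positive and negative parts.

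Combining the two equivalences shows $T_F$ is Markov iff $F$ is strict. Since the two strictness conditions are symmetric in the roles of the two arguments, $F$ is strict iff $F^T$ is strict, so the same equivalence holds for $T_{F^T}$, finishing the proof. The one delicate point is the $L^1$-extension step: identifying the almost-everywhere limit of $\partial_2 F(R, \cdot)$ and justifying the exchange of $\lim_R$ and the integral rely crucially on the $R$-monotonicity of $\partial_2 F$ that the $2$-increasing property supplies; everything else reduces to direct computation using the definitions of $T_F$ and strictness.
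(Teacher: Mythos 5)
Your argument is correct, and it is organized differently from the paper's proof in a way that actually yields a sharper statement. You match each of the two defining Markov conditions for the \emph{single} operator $T_F$ with one of the two strictness limits: unit preservation $T_F\indFunc{\R_+}=\indFunc{\R_+}$ is shown equivalent to $\lim_{t\to\infty}F(x,t)=x$ (via $\int_0^\infty \partial_2F(x,t)\,\mathrm{d}t=\lim_R F(x,R)$, groundedness, and the Lipschitz/absolute-continuity of the limit function), while integral preservation on $L^1(\R_+)$ is shown equivalent to $\lim_{t\to\infty}F(t,y)=y$ (testing with indicators one way, and identifying $\lim_R\partial_2F(R,\cdot)=1$ a.e.\ by monotonicity in $R$ plus monotone/dominated convergence the other way). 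This gives ``$T_F$ Markov $\iff F$ strict'', and the statement involving $T_{F^T}$ follows for free from the symmetry of strictness. The paper instead proves strictness $\Rightarrow$ both Markov properties of $T_F$ (and analogously $T_{F^T}$) by direct computation, and for the converse never touches the unit-preservation axiom: it uses the correspondence $F(t,w_2)=\int_0^t T_F\indFunc{[0,w_2]}(s)\,\mathrm{d}s$ from Theorem~\ref{thm:isometry_subdistribution_substochastic} together with integral preservation of $T_F$ for the first marginal limit, and of $T_{F^T}$ for the second, so it genuinely needs both operators in that direction. What your route buys is the stronger single-operator equivalence and an explicit justification of the step the paper compresses into ``$\partial_2F(\infty,t)$'' in the forward $L^1$ computation; what the paper's route buys is brevity, since it leans on the already-established isomorphism $F=F_{T_F}$ rather than re-deriving the a.e.\ limit of $\partial_2F(R,\cdot)$. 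Both arguments rest on the same absolute-continuity facts established in Lemma~\ref{lma:subdistribution_to_operator}, which you correctly cite for the identity $\int_0^\infty T_Ff(x)\,\mathrm{d}x=\lim_{R\to\infty}\int_0^\infty\partial_2F(R,t)f(t)\,\mathrm{d}t$.
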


\begin{proof}
First, let $F$ be strict. 
Then,
\begin{align*}
	T_F \indFunc{\R_+} (x) 	&= \partial_x \cInt{\partial_2 F(x, t) \indFunc{\R_+}(t)}{0}{\infty}{t}
							= \partial_x \cInt{\partial_2 F(x, t)}{0}{\infty}{t} \\
							&= \partial_x \rbraces{F(x, \infty) - F(x, 0)}
							= \partial_x x
							= \indFunc{\R_+}(x) 
\end{align*}
for all $x \in \R_+$.
Now let $f$ be in $L^1(\R_+)$, then it holds
\begin{align*}
	\cInt{T_F f(x)}{0}{\infty}{x}	&= \cInt{\partial_x \cInt{ \partial_2 F(x, t) f(t)}{0}{\infty}{t}}{0}{\infty}{x} 
									= \cInt{ \partial_2 F(\infty, t) f(t)}{0}{\infty}{t}
									= \cInt{f(t)}{0}{\infty}{t}
\end{align*}
using the strictness of $F$ and absolute continuity. 	
The claims for $T_{F^T}$ can be proven analogously by exploiting the strictness in the second component of $F$. 
Conversely, if $T_F$ and $T_{F^T}$ are doubly stochastic, then
\begin{align*}
	\lim\limits_{t \rightarrow \infty} F(t, w_2)	= \lim\limits_{t \rightarrow \infty} \cInt{T_F \indFunc{[0, w_2]}(s)}{0}{t}{s}
													= \cInt{T_F \indFunc{[0, w_2]}(s)}{0}{\infty}{s}
													= \cInt{\indFunc{[0, w_2](s)}}{0}{\infty}{s}
													= w_2
\end{align*}
and, analogously, for $\lim\limits_{t \rightarrow \infty} F(w_1, t) = w_1$. 
\end{proof}

Finally, we present an alternative proof of Theorem~\ref{thm:dependence_reduction}, using the theory of substochastic operators.

\begin{proof}[Proof of Theorem~\ref{thm:dependence_reduction}]
For every substochastic operator $T$ and every $t \in [0, \infty)$, it holds
\begin{equation*}
	\cInt{(Tf)^*(s)}{0}{t}{s} \leq \cInt{f^*(s)}{0}{t}{s}
\end{equation*}
or, in short, $Tf \prec f$, where $f^*$ denotes the decreasing rearrangement of $f$ (see, Chapter 1 in \cite{Bennett.1988}). 
Thus
\begin{align*}
	\partial_1 \Lambda_2 (w_1, w_2)	&\succ T_{\Lambda^T_1} \partial_1 \Lambda_2 (\cdot, w_2) (w_1)	\\
									&= \partial_1 \cInt{\partial_2 \Lambda_1(w_1, s) \partial_1 \Lambda_2(s, w_2)}{0}{\infty}{s} \\
									&= \partial_1 (\Lambda_1 * \Lambda_2) (w_1, w_2)
\end{align*}
together with the monotonicity of the tail dependence function yields
\begin{equation*}
	(\Lambda_1 * \Lambda_2) (w_1, w_2) \leq \Lambda_2 (w_1, w_2) ~. \qedhere
\end{equation*}
\end{proof}

\section*{Acknowledgements}
We are grateful to Piotr Jaworski for interesting discussions. 
We thank an anonymous referee for their constructive comments, which helped us to improve the article.
The second author gratefully acknowledges financial support from the German Academic Scholarship Foundation.

%\section*{References}
\bibliography{reference}

\begin{thebibliography}{25}
\expandafter\ifx\csname natexlab\endcsname\relax\def\natexlab#1{#1}\fi
\providecommand{\url}[1]{\texttt{#1}}
\providecommand{\href}[2]{#2}
\providecommand{\path}[1]{#1}
\providecommand{\DOIprefix}{doi:}
\providecommand{\ArXivprefix}{arXiv:}
\providecommand{\URLprefix}{URL: }
\providecommand{\Pubmedprefix}{pmid:}
\providecommand{\doi}[1]{\href{http://dx.doi.org/#1}{\path{#1}}}
\providecommand{\Pubmed}[1]{\href{pmid:#1}{\path{#1}}}
\providecommand{\bibinfo}[2]{#2}
\ifx\xfnm\relax \def\xfnm[#1]{\unskip,\space#1}\fi
%Type = Book
\bibitem[{Bennett and Sharpley(1988)}]{Bennett.1988}
\bibinfo{author}{Bennett, C.}, \bibinfo{author}{Sharpley, R.C.},
  \bibinfo{year}{1988}.
\newblock \bibinfo{title}{Interpolation of Operators}.
\newblock \bibinfo{publisher}{{Academic Press}}, \bibinfo{address}{Boston}.
%Type = Book
\bibitem[{Bingham et~al.(1987)Bingham, Goldie and Teugels}]{Bingham.1987}
\bibinfo{author}{Bingham, N.H.}, \bibinfo{author}{Goldie, C.M.},
  \bibinfo{author}{Teugels, J.L.}, \bibinfo{year}{1987}.
\newblock \bibinfo{title}{Regular variation}.
\newblock \bibinfo{publisher}{{Cambridge University Press}}.
%Type = Article
\bibitem[{Charpentier and Segers(2009)}]{Charpentier.2009}
\bibinfo{author}{Charpentier, A.}, \bibinfo{author}{Segers, J.},
  \bibinfo{year}{2009}.
\newblock \bibinfo{title}{Tails of multivariate archimedean copulas}.
\newblock \bibinfo{journal}{J. Multivar. Anal.} \bibinfo{volume}{100},
  \bibinfo{pages}{1521--1537}.
%Type = Article
\bibitem[{Darsow et~al.(1992)Darsow, Nguyen and Olsen}]{Darsow.1992}
\bibinfo{author}{Darsow, W.F.}, \bibinfo{author}{Nguyen, B.},
  \bibinfo{author}{Olsen, E.T.}, \bibinfo{year}{1992}.
\newblock \bibinfo{title}{Copulas and {M}arkov processes}.
\newblock \bibinfo{journal}{Illinois J. Math.} \bibinfo{volume}{36},
  \bibinfo{pages}{600--642}.
%Type = Article
\bibitem[{Darsow and Olsen(2010)}]{Darsow.2010}
\bibinfo{author}{Darsow, W.F.}, \bibinfo{author}{Olsen, E.},
  \bibinfo{year}{2010}.
\newblock \bibinfo{title}{Characterization of idempotent 2-copulas}.
\newblock \bibinfo{journal}{Note Mat.} \bibinfo{volume}{30},
  \bibinfo{pages}{147--177}.
%Type = Article
\bibitem[{Durante et~al.(2013)Durante, Fern{\'{a}}ndez~S{\'{a}}nchez and
  Sempi}]{Durante.2013}
\bibinfo{author}{Durante, F.}, \bibinfo{author}{Fern{\'{a}}ndez~S{\'{a}}nchez,
  J.}, \bibinfo{author}{Sempi, C.}, \bibinfo{year}{2013}.
\newblock \bibinfo{title}{Multivariate patchwork copulas: A unified approach
  with applications to partial comonotonicity}.
\newblock \bibinfo{journal}{Insurance Math. Econom.} \bibinfo{volume}{53},
  \bibinfo{pages}{897--905}.
%Type = Article
\bibitem[{Durante et~al.(2008)Durante, Klement and
  Quesada-Molina}]{Durante.2008}
\bibinfo{author}{Durante, F.}, \bibinfo{author}{Klement, E.P.},
  \bibinfo{author}{Quesada-Molina, J.J.}, \bibinfo{year}{2008}.
\newblock \bibinfo{title}{Bounds for trivariate copulas with given bivariate
  marginals}.
\newblock \bibinfo{journal}{J. Inequal. Appl.} \bibinfo{volume}{2008},
  \bibinfo{pages}{161537}.
%Type = Book
\bibitem[{Durante and Sempi(2015)}]{Durante.2015}
\bibinfo{author}{Durante, F.}, \bibinfo{author}{Sempi, C.},
  \bibinfo{year}{2015}.
\newblock \bibinfo{title}{Principles of Copula Theory}.
\newblock \bibinfo{publisher}{{CRC Press}}, \bibinfo{address}{Boca Raton}.
%Type = Book
\bibitem[{Fukushima et~al.(2010)Fukushima, Oshima and Takeda}]{Fukushima.2010}
\bibinfo{author}{Fukushima, M.}, \bibinfo{author}{Oshima, Y.},
  \bibinfo{author}{Takeda, M.}, \bibinfo{year}{2010}.
\newblock \bibinfo{title}{Dirichlet Forms and Symmetric Markov Processes}.
\newblock \bibinfo{publisher}{De Gruyter}.
%Type = Incollection
\bibitem[{Gudendorf and Segers(2010)}]{Gudendorf.2010}
\bibinfo{author}{Gudendorf, G.}, \bibinfo{author}{Segers, J.},
  \bibinfo{year}{2010}.
\newblock \bibinfo{title}{Extreme-value copulas}, in:
  \bibinfo{editor}{Jaworski, P.}, \bibinfo{editor}{Durante, F.},
  \bibinfo{editor}{H{\"a}rdle, W.K.}, \bibinfo{editor}{Rychlik, T.} (Eds.),
  \bibinfo{booktitle}{Copula Theory and Its Applications}.
  \bibinfo{publisher}{Springer}. volume \bibinfo{volume}{198} of
  \textit{\bibinfo{series}{LNS}}, pp. \bibinfo{pages}{127--145}.
%Type = Article
\bibitem[{Jaworski(2006)}]{Jaworski.2006}
\bibinfo{author}{Jaworski, P.}, \bibinfo{year}{2006}.
\newblock \bibinfo{title}{On uniform tail expansions of multivariate copulas
  and wide convergence of measures}.
\newblock \bibinfo{journal}{Appl. Math.} \bibinfo{volume}{33},
  \bibinfo{pages}{159--184}.
%Type = Article
\bibitem[{Jaworski(2015)}]{Jaworski.2015}
\bibinfo{author}{Jaworski, P.}, \bibinfo{year}{2015}.
\newblock \bibinfo{title}{Univariate conditioning of vine copulas}.
\newblock \bibinfo{journal}{J. Multivar. Anal.} \bibinfo{volume}{138},
  \bibinfo{pages}{89--103}.
%Type = Book
\bibitem[{Joe(2015)}]{Joe.2014}
\bibinfo{author}{Joe, H.}, \bibinfo{year}{2015}.
\newblock \bibinfo{title}{Dependence Modeling with Copulas}.
\newblock \bibinfo{publisher}{{CRC Press}}, \bibinfo{address}{Boca Raton}.
%Type = Article
\bibitem[{Joe et~al.(2010)Joe, Li and Nikoloulopoulos}]{Joe.2010}
\bibinfo{author}{Joe, H.}, \bibinfo{author}{Li, H.},
  \bibinfo{author}{Nikoloulopoulos, A.K.}, \bibinfo{year}{2010}.
\newblock \bibinfo{title}{Tail dependence functions and vine copulas}.
\newblock \bibinfo{journal}{J. Multivar. Anal.} \bibinfo{volume}{101},
  \bibinfo{pages}{252--270}.
%Type = Article
\bibitem[{Mansour and Shattuck(2013)}]{Mansour.2013}
\bibinfo{author}{Mansour, T.}, \bibinfo{author}{Shattuck, M.},
  \bibinfo{year}{2013}.
\newblock \bibinfo{title}{A combinatorial approach to a general two-term
  recurrence}.
\newblock \bibinfo{journal}{Disc. Appl. Math.} \bibinfo{volume}{161},
  \bibinfo{pages}{2084--2094}.
%Type = Book
\bibitem[{Nelsen(2006)}]{Nelsen.2006}
\bibinfo{author}{Nelsen, R.B.}, \bibinfo{year}{2006}.
\newblock \bibinfo{title}{An introduction to copulas}.
\newblock Springer series in statistics. \bibinfo{edition}{second edition} ed.,
  \bibinfo{publisher}{Springer}, \bibinfo{address}{New York}.
%Type = Article
\bibitem[{Neuwirth(2001)}]{Neuwirth.2001}
\bibinfo{author}{Neuwirth, E.}, \bibinfo{year}{2001}.
\newblock \bibinfo{title}{Recursively defined combinatorial functions:
  extending {G}alton's board}.
\newblock \bibinfo{journal}{Disc. Math.} \bibinfo{volume}{239},
  \bibinfo{pages}{33--51}.
%Type = Article
\bibitem[{Novinger(1972)}]{Novinger.1972}
\bibinfo{author}{Novinger, W.P.}, \bibinfo{year}{1972}.
\newblock \bibinfo{title}{Mean convergence in ${L}^p$ spaces}.
\newblock \bibinfo{journal}{Proc. Amer. Math. Soc.} \bibinfo{volume}{34},
  \bibinfo{pages}{627}.
%Type = Incollection
\bibitem[{Olsen et~al.(1996)Olsen, Darsow and Nguyen}]{Olsen.1996}
\bibinfo{author}{Olsen, E.T.}, \bibinfo{author}{Darsow, W.F.},
  \bibinfo{author}{Nguyen, B.}, \bibinfo{year}{1996}.
\newblock \bibinfo{title}{Copulas and {M}arkov operators}, in:
  \bibinfo{booktitle}{Distributions with fixed marginals and related topics}.
  \bibinfo{publisher}{{Institute of Mathematical Statistics}}, pp.
  \bibinfo{pages}{244--259}.
%Type = Article
\bibitem[{Schmidt and Stadtm{\"u}ller(2006)}]{Schmidt.2006}
\bibinfo{author}{Schmidt, R.}, \bibinfo{author}{Stadtm{\"u}ller, U.},
  \bibinfo{year}{2006}.
\newblock \bibinfo{title}{Non-parametric estimation of tail dependence}.
\newblock \bibinfo{journal}{Scand. J. Stat.} \bibinfo{volume}{33},
  \bibinfo{pages}{307--335}.
%Type = Article
\bibitem[{Siburg and Stoimenov(2008)}]{Siburg.2008}
\bibinfo{author}{Siburg, K.F.}, \bibinfo{author}{Stoimenov, P.A.},
  \bibinfo{year}{2008}.
\newblock \bibinfo{title}{A scalar product for copulas}.
\newblock \bibinfo{journal}{J. Math. Anal. Appl.} \bibinfo{volume}{344},
  \bibinfo{pages}{429--439}.
%Type = Article
\bibitem[{Trutschnig(2011)}]{Trutschnig.2011}
\bibinfo{author}{Trutschnig, W.}, \bibinfo{year}{2011}.
\newblock \bibinfo{title}{On a strong metric on the space of copulas and its
  induced dependence measure}.
\newblock \bibinfo{journal}{J. Math. Anal. Appl.} \bibinfo{volume}{384},
  \bibinfo{pages}{690--705}.
%Type = Article
\bibitem[{Trutschnig(2013a)}]{Trutschnig.2013b}
\bibinfo{author}{Trutschnig, W.}, \bibinfo{year}{2013}a.
\newblock \bibinfo{title}{On {C}es{\'a}ro convergence of iterates of the star
  product of copulas}.
\newblock \bibinfo{journal}{Stat. Prob. Lett.} \bibinfo{volume}{83},
  \bibinfo{pages}{357--365}.
%Type = Incollection
\bibitem[{Trutschnig(2013b)}]{Trutschnig.2013}
\bibinfo{author}{Trutschnig, W.}, \bibinfo{year}{2013}b.
\newblock \bibinfo{title}{Some smoothing properties of the star product of
  copulas}, in: \bibinfo{booktitle}{Synergies of Soft Computing and Statistics
  for Intelligent Data Analysis}. \bibinfo{publisher}{Springer}. Advances in
  Intelligent Systems and Computing, pp. \bibinfo{pages}{349--357}.
%Type = Article
\bibitem[{Tsuji(1952)}]{Tsuji.1952}
\bibinfo{author}{Tsuji, M.}, \bibinfo{year}{1952}.
\newblock \bibinfo{title}{On {F. Riesz'} fundamental theorem on subharmonic
  functions}.
\newblock \bibinfo{journal}{Tohoku Math. J.} \bibinfo{volume}{4},
  \bibinfo{pages}{131--140}.

\end{thebibliography}

\end{document}